\newtheorem{teo}{Theorem}[section]
\newtheorem{prop}[teo]{Proposition}
\newtheorem{lem}[teo]{Lemma}
\newtheorem{coro}[teo]{Corollary}
\theoremstyle{definition}
\newtheorem{rem}[teo]{Remark}
\def\h{{\cal H}}
\def\b{{\cal B}}
\def\a{{\cal A}}
\def\jj{{\cal J}}
\def\bpe{{\cal B}_p({\cal H})}
\def\upe{U_p({\cal H})}
\def\o{ {\cal O} }
\def\k{ {\cal K} }
\def\u{ {\cal U} }
\begin{document}

\title{\vspace*{0cm}The rectifiable distance in the unitary Fredholm group\footnote{2000 MSC. Primary 22E65;  Secondary 58B20,
58E50.}}
\date{}
\author{Esteban Andruchow and Gabriel Larotonda\footnote{Both authors partially supported by Instituto Argentino de Matemática and CONICET}}

\maketitle

\begin{abstract}
Let $U_c({\cal H})=\{u: u \mbox{ unitary and } u-1 \mbox{ compact}\}$ stand for the unitary Fredholm group. We prove the following convexity result. Denote by $d_\infty$  the rectifiable distance induced by the Finsler metric given by the operator norm in $U_c({\cal H})$. If $u_0,u_1,u\in U_c({\cal H})$ and the geodesic $\beta$ joining $u_0$ and $u_1$ in $U_c({\cal H})$ verifies $d_\infty(u,\beta)<\pi/2$, then the map $f(s)=d_\infty(u,\beta(s))$ is convex for $s\in[0,1]$. In particular the convexity radius of the geodesic balls in $U_c({\cal H})$ is $\pi/4$. The same convexity property holds in the $p$-Schatten unitary groups $U_p({\cal H})=\{u: u \mbox{ unitary and } u-1  \mbox{ in the } p \mbox{-Schatten class}\}$, for $p$ an even integer, $p\ge 4$ (in this case, the distance is strictly convex). The same  results hold in the unitary group of a $C^*$-algebra with a faithful finite trace. We apply this convexity result to establish the existence of curves of minimal length with given initial conditions, in the unitary orbit of an operator, under the action of the Fredholm group. We characterize self-adjoint operators $A$ such that this orbit is a submanifold (of the affine space $A+{\cal K}({\cal H})$, where ${\cal K}({\cal H})$=compact operators).\footnote{Keywords and phrases: finite trace, Fredholm operator, geodesic convexity, homogeneous space, short geodesic, unitary group.}
\end{abstract}

\section{Introduction}
As in the finite dimensional setting, an infinite dimensional manifold with a Finsler metric (a continuous distribution of norms in the tangent spaces) becomes a metric space. The distance between two points is given by the infimum of the lengths of the smooth curves which join these points. The first problem one encounters is the existence of curves of minimal length, or metric geodesics. In the finite dimensional case, compactness of closed balls is a key fact in the solution of this problem. For infinite dimensional manifolds, in the absence of compactness, convexity properties of the distance can prove a useful substitute. In this paper we study manifolds which carry a transitive action of the unitary Fredholm group (defined below), i.e. homogeneous spaces of the Fredholm group.  We start by considering the metric geometry of the group. It is well-known that with the Finsler metric given by the usual operator norm, geodesics (starting at $1$) are one parameter unitary groups $e^{itx}$  with $x^*=x$ compact, and  remain minimal if $|t|\le \frac{\pi}{\|x\|}$.  We prove a convexity property for the metric in the group: if three elements $u_0,u_1,u_2$ of the group lie close enough, the distance from $u_0$ to the points of the geodesic joining $u_1$ and $u_2$, is a convex map in the time parameter. In particular, the radius of convexity of the geodesic balls in the unitary group is $\pi/4$.  With a slightly different proof, we obtain the same result for the unitary group $U_{\cal A}$ of a $C^*$-algebra ${\cal A}$ with a finite faithful trace. 

This results are used to establish the existence of metric geodesics with given initial conditions in certain homogeneous spaces of the Fredholm group. More precisely, in the orbits of operators under the inner action of this group.

Let $\h$ be a Hilbert space. Denote by $\b(\h)$, $\k(\h)$ and $\b_p(\h)$ respectively, the Banach spaces of bounded, compact and $p$-Schatten operators. Let $U(\h)$ be the unitary group of $\h$. This paper is a sequel of \cite{upe}, where we studied the geometry of homogeneous spaces of the classical Lie-Banach  subgroups of $U(\h)$ 
$$
U_p(\h)=\{u\in U(\h): u-1\in\b_p(\h)\}.
$$
The Lie algebra of $U_p(\h)$ is the space $\b_p(\h)_{ah}$ of anti-hermitian operators in $\b_p(\h)$. Thus the natural Finsler metric to consider in $U_p(\h)$ is the $p$-norm, namely $\|x\|_p=Tr(|x|^p)^{1/p}$. We denote by $\|x\|$ the usual supremum norm of $x$. 

In this paper we shall focus on the (unitary) Fredholm group
$$
U_c(\h)=\{u\in U(\h): u-1\in\k(\h)\}.
$$

\bigskip

Among the facts proved in \cite{upe}, let us cite the following, which also hold in the unitary group $U_{\cal A}$ of a $C^*$-algebra ${\cal A}$ with a finite faithful trace \cite{ea}:

\begin{enumerate}
\item
With the Finsler metric given by the $p$-norm, if $p\ge 2$, the curves of the form $\mu(t)=ue^{tx}$ ($u\in U_p(\h)$, $x\in\b_p(\h)_{ah}$, $\|x\|\le \pi$) have minimal length (and are unique with this property if $\|x\|<\pi$) as long as $|t|\le 1$. Any pair $u_0,u_1$ of elements in $U_p(\h)$ can be joined by a minimal geodesic, which is unique if $\|u_0-u_1\| <2$.
\item
If $p$ is an even integer, $d_p$ denotes the distance given by the Finsler metric, $u_0\in U_p(\h)$ and $\delta$ is a geodesic such that $d_p(u_0,\delta(t))<\pi/4$ for $t$ in a certain interval $I$, then the 
function $f(t)=d_p(u_0,\delta(t))^p$ is strictly convex for $t\in I$.
\end{enumerate}

\medskip

Our first result establishes that the second cited result can be refined: the function 
$$
g(t)=d_p(u_0,\delta(t))
$$
is strictly convex in $I$ for $p$ even, $p\ge 4$. As a consequence, if $u,v\in U_c(\h)$, and $x=-x^*\in\k(\h)$, with $\|x\|\le \pi/4$, the function
$$
g_\infty(t)=d_\infty(u, ve^{tx}), \ \ t\in[0,1]
$$
is convex. 

In order to obtain this result, we need the analogous to the first cited result to hold for the group $U_c(\h)$. This is perhaps well-known, we sketch a proof at the beginning of the next section. With the same proof, the result holds for $C^*$-algebras with a finite faithful trace.

\medskip

We apply the convexity property of the metric to the geometric study of the unitary orbits of an  operator $A$ under the Fredholm group,
$$
\o_A=\{uAu^*: u\in U_c(\h)\}.
$$
We deal more extensively with the case $A^*=A$, though we consider an example of a non self-adjoint operator.
This orbit lies inside the affine space $A+\k(\h)$, and therefore its tangent spaces lie inside $\k(\h)$. A natural Finsler metric to consider here, following the ideas in \cite{duranmatarecht}, is the quotient metric induced by the usual operator norm. We prove first that if $A^*=A$, then $\o_A$ is a complemented submanifold of $A+\k(\h)$ and a smooth homogeneous space if and only if the spectrum of $A$ is finite. Then we proceed on the study of the existence of minimal curves in $\o_A$ with given initial conditions. If $x=-x^*\in \k(\h)$ and $b\in\o_A$,  $xb-bx$ is a typical tangent vector of $\o_A$ at $b$. In the case when  $A$ has finite rank (or more generally, $A$ has finite spectrum with all but one of the eigenspaces of finite dimension) we prove that there exists a compact anti-hermitian operator $z_c$ such that
\begin{enumerate}
\item
$xb-bx=z_cb-bz_c$, and
\item
$\|z_c\|\le \|x'\|$ for all $x'\in\k(\h)_{ah}$ such that $x'b-bx'=xb-bx$,
\end{enumerate}
and the curve $\epsilon(t)= e^{tz_c}be^{-tz_c}$  has minimal length up to $|t|\le \frac{\pi}{4\|z_c\|}$. Such elements $z_c$  are called in \cite{duranmatarecht} {\it minimal liftings} of the tangent vector $xb-bx$ because they achieve the quotient norm, they need not be unique. They are found using a matrix completion technique developed by C. Davis, W. M. Kahan and H. F. Weinberger \cite{davis}, in their solution to M. G. Krein's extension problem. These minimal liftings provide, in a different but related framework \cite{duranmatarecht}, curves of minimal length in homogeneous spaces of the unitary group of a von Neumann algebra. 

\bigskip

The contents of the paper are as follows. In Section 2 we prove the convexity results for the geodesic distance in $U_p(\h)$ ($p$ even, $p\ge 4$) and $U_c(\h)$. In Section 3 we show that this result holds in $C^*$-algebras with a finite and faithful trace. In Section 4 we characterize self-adjoint operators $A$ such that $\o_A$ is a submanifold of $A+\k(\h)$. In Section 5 we introduce a Finsler metric in $\o_A$, and show that if, for instance, $A$ has finite rank, then one can find curves of minimal length (or metric geodesics) in $\o_A$ with given initial conditions (position and velocity). In Section 6 we further specialize to the case $A=P$ an orthogonal projection, and show that in $\o_P$ one can find metric geodesics joining any given pair of points. In Section 7 we consider the example of a non self-adjoint operator, namely an order two nilpotent $N$. We show that for certain special directions, called here anti-symmetric tangent vectors, metric geodesics with these special vectors as velocity vectors exist.

\section{Convexity in the classical unitary groups}

In this paper, we consider piecewise smooth curves $\gamma:[a,b]\to X$ taking values in either a unitary group, or an homogeneous space of a unitary group. By smooth we mean $C^1$ and with non vanishing derivative. The rectifiable length of $\gamma$ is defined in the usual way,
$$
L(\gamma)=\int_a^b \|\dot{\gamma}\|_{\gamma},
$$
where $\|v\|_x$ denotes the norm of a tangent vector $v$ at $x\in X$. The rectifiable distance is defined as the infimum of the lengths of smooth curves joining its given endpoints, that is
$$
d(x,y)=\inf\{ L(\gamma): \,\gamma\mbox{ is smooth}, \, \gamma(a)=x,\gamma(b)=y\}.
$$
For $p\ge 1$, we use $d_p$ to indicate the rectifiable distance in the group $U_p(\cal H)$, and $d_{\infty}$ to indicate the rectifiable distance in either the Fredholm group or a given $C^*$-algebra (i.e. the distance obtained by measuring curves with the uniform norm).

\begin{lem}\label{corta}
Let $x\in\k(\h)_{ah}$ such that $\|x\|\le \pi$, and $u\in U_c(\h)$. Then the curve $\epsilon(t)=ue^{tx}$ has minimal length for $t\in[0,1]$. Any pair $u,v\in U_c(\h)$ can be joined by such a curve.
\end{lem}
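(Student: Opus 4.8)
The plan is to transfer the classical minimality result for one-parameter unitary groups in $B(\h)$ (equivalently in $U_p(\h)$, cited as fact (1) in the introduction) to the Fredholm group $U_c(\h)$, using the fact that $U_c(\h)$ carries the norm topology and that $\k(\h)$ is the closure of the finite-rank operators. By left-translating by $u^*$ (an isometry of the Finsler metric, since the metric is $U_c(\h)$-invariant) we may assume $u=1$, so the curve is $\epsilon(t)=e^{tx}$ with $x=-x^*$ compact, $\|x\|\le\pi$. Its length in the $d_\infty$ metric is $\int_0^1\|x\|\,dt=\|x\|$. We must show no piecewise smooth curve in $U_c(\h)$ from $1$ to $e^x$ is shorter. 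The key observation is that $U_c(\h)\subset U(\h)$ and the operator-norm length of any curve in $U_c(\h)$ is the same as its length as a curve in the full unitary group $U(\h)$ with the bi-invariant metric given by the operator norm. Hence $d_\infty^{U_c}(1,e^x)\ge d^{U(\h)}(1,e^x)$, and by the classical result (one-parameter groups $e^{tx}$ with $\|x\|\le\pi$ are minimal in $U(\h)$, which follows for instance from the spectral theorem and the scalar estimate $|e^{i\theta}-1|\le|\theta|$ together with a standard argument comparing a curve to the geodesic via the exponential) this lower bound equals $\|x\|$. Therefore $L(\epsilon)=\|x\|=d_\infty(1,e^x)$, proving minimality.

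For the existence statement, given $u,v\in U_c(\h)$ we need a compact anti-hermitian $x$ with $\|x\|\le\pi$ and $ue^x=v$, i.e. $e^x=u^*v$. Now $w:=u^*v\in U_c(\h)$, so $w-1$ is compact; writing $w=e^x$ via the Borel functional calculus with the branch of $\log$ on the unit circle cut at $-1$ taking values in $[-i\pi,i\pi]$, we get $x=-x^*$ with $\|x\|\le\pi$, and $x$ is compact because $x=g(w)$ where $g$ is continuous on the spectrum of $w$ with $g(1)=0$ (so $g(w)$ lies in the closed ideal generated by $w-1$, which is contained in $\k(\h)$). Then $\epsilon(t)=ue^{tx}$ joins $u$ to $v$ and has minimal length by the first part. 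One subtlety: if $-1$ is in the spectrum of $w$ the chosen branch of $\log$ is discontinuous there; but then $\|x\|=\pi$ is still fine for minimality, and $g$ can be taken to be any bounded Borel function agreeing with the principal branch, still vanishing at $1$, so $g(w)$ remains compact — alternatively perturb $v$ slightly or note the ideal argument only needs $g$ Borel and $g(1)=0$.

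The main obstacle, and the only point requiring genuine care, is the lower bound $d_\infty^{U_c}(1,e^x)\ge\|x\|$ — equivalently, that a curve constrained to stay in the smaller group $U_c(\h)$ cannot beat the geodesic. Since lengths are computed from the same norm on the same tangent vectors, any curve in $U_c(\h)$ is also a curve in $U(\h)$ of equal length, so this reduces cleanly to the known minimality of $e^{tx}$ in the full unitary group. I would either cite this (it is folklore; a proof appears e.g. in the references on Finsler geometry of the classical groups) or include the short spectral argument: for a smooth curve $\gamma$ from $1$ to $e^x$, reduce to the case $\gamma$ lies in the commutative von Neumann algebra generated by $x$ is not possible directly, so instead use the standard trick of projecting onto spectral subspaces of $x$ and using $|\langle\dot\gamma\xi,\xi\rangle|\le\|\dot\gamma\|$ to bound the angle swept in each eigendirection by the length, giving $L(\gamma)\ge\|x\|$ provided $\|x\|\le\pi$ so that no shortcut "wrapping the other way" helps. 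The rest of the argument is routine functional calculus.
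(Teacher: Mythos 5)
Your proposal is essentially correct, but it routes the key lower bound through the full unitary group rather than proving it directly, which is where it diverges from the paper. The paper's proof is self-contained: since $x$ is compact, $x^2\le 0$ attains its norm at an eigenvector $\xi$ ($x^2\xi=-\|x\|^2\xi$), and the evaluation map $u\mapsto u\xi$ sends $\epsilon(t)=e^{tx}$ to a great-circle arc of length $\|x\|\le\pi$ in the unit sphere of $\h$, while the estimate $\|\dot\gamma(t)\xi\|\le\|\dot\gamma(t)\|$ shows this map does not increase length of any competitor; this is exactly the ``angle swept in a norming eigendirection'' argument you sketch at the end, and note that it never uses that the competitor stays in $U_c(\h)$, so it simultaneously proves the $U(\h)$-minimality you want to cite (your other sketch, via $|e^{i\theta}-1|\le|\theta|$, only controls $\|e^x-1\|$ and does not by itself yield the length lower bound, so either cite the known result for $U(\h)$ properly or write out the eigenvector argument). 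Your comparison ``any curve in $U_c(\h)$ is a curve in $U(\h)$ of equal length, and $\epsilon$ realizes the $U(\h)$-distance'' is logically sound and buys brevity; the paper's version buys independence from an external citation. For the second assertion the paper simply quotes de la Harpe for surjectivity of $\exp$ on $U_c(\h)$ with $\|x\|\le\pi$, while you build the logarithm by functional calculus: that works, but your fallback claim that any Borel $g$ with $g(1)=0$ gives $g(w)$ compact is false in general; here it is rescued by the structure of $w=u^*v$, since $w-1$ is a compact normal operator, so $\sigma(w)$ accumulates only at $1$, the point $-1$ is at worst an isolated eigenvalue of finite multiplicity, the principal branch of $\log$ is therefore already continuous on $\sigma(w)$ (making the perturbation of $v$ unnecessary), and writing $w=Q_0+\sum_n e^{i\theta_n}Q_n$ with $\theta_n\in(-\pi,\pi]$, $\theta_n\to 0$ and $Q_n$ finite rank exhibits $x=\sum_n i\theta_n Q_n$ as compact with $\|x\|\le\pi$. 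With those two repairs your argument is a valid alternative proof.
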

\begin{proof}
Clearly we can suppose $u=1$. Note that $x^2\le 0$, and since it is compact, there exists a unit vector $\xi\in\h$, which is a norming eigenvector for $x^2$, i.e. $x^2\xi=-\|x\|^2\xi$. Consider the map
$$
\rho: U_c(\h)\to S_\h=\{\eta\in\h: \|\eta\|=1\} , \ \ \rho(u)=u\xi.
$$
Note that $e(t) =\rho(\epsilon(t))$ is a geodesic of the sphere $S_\h$. Indeed, if $k=\|x\|$
$$
\ddot{e}(t)=e^{tx}x^2\xi=-k^2e^{tx}\xi=-k^2e(t).
$$
Moreover, since $k\le \pi$, it is a minimal geodesic of the sphere for $t\in[0,1]$. Let $\gamma(t)$, $t\in[0,1]$ be a curve in $U_c(\h)$ joining the same endpoints as $\epsilon$. Then $\rho(\gamma)$ joins the same endpoints as $e$. Thus $L(e)\le L(\rho(\gamma))$. On the other hand
$$
L(\rho(\gamma))=\int_0^1\|\dot{\gamma}(t)\xi\| d t \le \int_0^1 \|\dot{\gamma}(t)\| d t =L(\gamma).
$$
Moreover, 
$$
\|\dot{e}\|^2=\|x\xi\|^2=\langle x\xi,x\xi \rangle =-\langle x^2\xi,\xi\rangle =\|x^2\|=\|x\|^2,
$$
and thus $L(e)=L(\epsilon)$. This completes the first part of the proof. 

Given $w\in U_c(\h)$, there exists $x\in\k(\h)_{ah}$ such that $w=e^x$ \cite{harpe}. Moreover, $x$ can be chosen with $\|x\|\le \pi$. 
\end{proof}

\begin{rem}\label{jesiano}
The Hessian of the $p$-norms was studied in \cite{convexg,cocomata}. We recall a few facts we will use in the proof of the next theorem. Let $a,b,c\in \bpe_{ah}$, let $H_a:\bpe_{ah}\times \bpe_{ah}\to \mathbb R$ stand for the symmetric bilinear form given by
$$
H_a(b,c)=(-1)^{\frac{p}{2}}p \sum_{k=0}^{p-2}\, Tr(a^{p-2-k}b a^k c).
$$
Denote by $Q_a$ the quadratic form induced by $H_a$. Then (cf. Lemma 4.1 in \cite{convexg} and equation (3.1) in \cite{cocomata}):
\begin{enumerate}
\item $Q_a([b,a])\le 4 \|a\|^2 Q_a(b)$.
\item $Q_a(b)=p\|ba^{\frac{p}{2}-1}\|_2^2+\frac{p}{2}\sum_{l+m=n-2}\|a^l (ab+ba)a^m\|_2^2$.
\end{enumerate}
In particular $H_a$ is positive definite for any $a\in \bpe_{ah}$. These facts were proved in the context of a C$^*$-algebra with a finite trace. They hold in our context here, because they rely on the abstract properties of the trace, and on the following facts, which are elementary:
$$
\|xy\|_p\le \|x\| \|y\|_p, \ \ \| |x|y\|_p=\|xy\|_p , \ \ x\in \b(\h), y\in\bpe.
$$
\end{rem}

Our convexity results follow. If $u\in\upe$, denote by 
$$
B_p(u,r)=\{w\in\upe: d_p(u,w)<r\}
$$
the ball of radius $r$ around $u$ in $\bpe$. First we prove a refinement of a result from \cite[Theorem 3.6]{upe}. Let $g(r)={\rm sinc}(r)=r^{-1}\sin(r)$, which is a strictly decreasing function in $[0,\pi]$, with $g(0)=1$ and $g(\pi)=0$.

\medskip

\begin{teo}\label{convexidad_p}
Let $p$ be an even integer, $p\ge 4$. Let $u\in \upe$ and  let $\beta:[0,1]\to\upe$ be a non constant geodesic contained in the geodesic ball of radius $r_p=\frac12 g^{-1}(\frac{1}{p-1})$, namely $\beta\subset B_p(u,r_p)$. Assume further that $u$ does not belong to any prolongation of $\beta$. Then  $f_p(s)=d_p(u,\beta(s))^p$ verifies 
$$
(p-1)f'_p(s)^2\le pf_p(s) f_p''(s) \frac{1}{g(2\|w_s\|_p)}\le p(p-1)f_p(s)f''_p(s),
$$
for any $s\in [0,1]$, and equality holds in the last term if and only if $f_p''(s)=0$.
\end{teo}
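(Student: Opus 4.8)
The plan is to reduce the statement to a pointwise differential inequality for the distance-to-$u$ function along the geodesic $\beta(s)=\beta_0 e^{s w}$, and then to exploit the Hessian identities for the $p$-norm recorded in Remark~\ref{jesiano}. Write $f_p(s)=d_p(u,\beta(s))^p$. Because $\beta$ stays inside the ball $B_p(u,r_p)$ and $u$ is not on a prolongation of $\beta$, for each $s$ there is a \emph{unique} minimal geodesic from $u$ to $\beta(s)$ (by the uniqueness clause in the first cited result, valid once the distance is below $\pi$, which is guaranteed since $2r_p<\pi$). Hence $f_p$ is smooth: writing the minimal geodesic as $u e^{v_s}$ with $v_s=v_s^*=-v_s \in \bpe_{ah}$ depending smoothly on $s$, we have $d_p(u,\beta(s))=\|v_s\|_p$ and $f_p(s)=\|v_s\|_p^p = Tr(|v_s|^p)$. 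Here $w_s$ denotes the (scaled) logarithmic velocity of the minimal geodesic joining $u$ to $\beta(s)$, so that $\|w_s\|_p = \|v_s\|_p = d_p(u,\beta(s))$; this is the quantity appearing in the $g(2\|w_s\|_p)$ factor.

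Next I would compute $f_p'$ and $f_p''$ in terms of the first and second variation of $v_s$. Differentiating $Tr(|v_s|^p)$ and using the trace's properties gives $f_p'(s) = p\, Tr(|v_s|^{p-1}\,\mathrm{sgn}(v_s)\,\dot v_s)$, which one rewrites (using anti-hermiticity, as in \cite{convexg,cocomata}) as $f_p'(s)=H_{v_s}(v_s,\dot v_s)/(\text{const})$ up to the normalization built into $H_a$; more usefully, $f_p'(s)$ equals $p\,\mathrm{Re}\,Tr(v_s^{p-1}\dot v_s)$ type expression, and $f_p''(s) = Q_{v_s}(\dot v_s) + (\text{first-derivative term in }\ddot v_s)$. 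The key geometric input is that the second-variation term involving $\ddot v_s$ vanishes (or is controlled) because each $u e^{s'\mapsto v_s}$ is itself a \emph{geodesic} in the $v_s$-variable — this is the standard Jacobi-field computation: the second derivative of the energy of a one-parameter family of geodesics is governed by the index form, hence by $Q_{v_s}$ evaluated on the transverse Jacobi field, together with a curvature term which is exactly what Remark~\ref{jesiano}(1) bounds via $Q_a([b,a])\le 4\|a\|^2 Q_a(b)$.

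From there the argument becomes algebraic. The first inequality $(p-1)f_p'(s)^2 \le p\,f_p(s)\,f_p''(s)\,g(2\|w_s\|_p)^{-1}$ should come from a Cauchy–Schwarz estimate for the bilinear form $H_{v_s}$ — namely $H_{v_s}(v_s,\dot v_s)^2 \le H_{v_s}(v_s,v_s)\,H_{v_s}(\dot v_s,\dot v_s)$ — combined with the identities $H_{v_s}(v_s,v_s) = p(p-1)\|v_s\|_p^p = p(p-1)f_p(s)$ and the sinc-factor emerging from estimating $H_{v_s}(\dot v_s,\dot v_s)=Q_{v_s}(\dot v_s)$ against the Hessian of $f_p$ along the family of geodesics; the function $g(r)=\mathrm{sinc}(r)$ enters precisely as the factor relating the ambient second variation to the naive one (this is the standard ``$\sin$'' distortion of Jacobi fields on the sphere/unitary group, already visible in Lemma~\ref{corta}). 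The second inequality $f_p''(s)\,g(2\|w_s\|_p)^{-1}\le (p-1)f_p''(s)$, with equality iff $f_p''(s)=0$, then reduces to two facts: $f_p''(s)\ge 0$ (convexity of $f_p$, which itself follows once we know the right-hand side is nonnegative — so one must be slightly careful about the logical order and derive convexity first, e.g. from positivity of $Q_{v_s}$ plus the curvature bound), and $1\le g(2\|w_s\|_p)^{-1}\le p-1$, i.e. $\tfrac{1}{p-1}\le g(2\|w_s\|_p)\le 1$. The upper bound $g\le 1$ is immediate; the lower bound $g(2\|w_s\|_p)\ge \tfrac{1}{p-1}$ is exactly where the radius $r_p=\tfrac12 g^{-1}(\tfrac{1}{p-1})$ is designed to fit, since $\|w_s\|_p=d_p(u,\beta(s))<r_p$ and $g$ is decreasing on $[0,\pi]$.

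The main obstacle, I expect, is making the second-variation computation rigorous: one must justify that $s\mapsto v_s$ is $C^2$ (requiring a smooth-dependence/implicit-function argument for the minimal logarithm, leaning on uniqueness of short geodesics and on the inequality $\|u_0-u_1\|<2$ from the first cited result to stay in the injectivity domain of $\exp$), and one must correctly identify the ``curvature'' contribution so that it is absorbed by Remark~\ref{jesiano}(1) rather than fighting against it. In particular the sign bookkeeping in $Q_a([b,a])\le 4\|a\|^2 Q_a(b)$ versus the Jacobi equation on the unitary group, and tracking where $\|a\|=\|v_s\|\ (\le \pi)$ gets converted into the sinc factor $g(2\|w_s\|_p)$, is the delicate part; everything else is trace manipulation and elementary monotonicity of $g$.
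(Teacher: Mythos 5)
Your proposal gets the algebraic frame right (take logarithms $w_s=\log\beta(s)$, use Cauchy--Schwarz for $H_{w_s}$ together with $Q_{w_s}(w_s)=p(p-1)f_p(s)$, and obtain the right-hand inequality from monotonicity of $g$ and the choice of $r_p$ so that $1/g(2\|w_s\|_p)<p-1$), but the central analytic step is missing, and the mechanism you propose for it is wrong. You write $f_p''(s)=Q_{w_s}(\dot w_s)+(\hbox{term in }\ddot w_s)$ and claim that the $\ddot w_s$ term ``vanishes (or is controlled)'' by a Jacobi-field/index-form argument. It does not vanish: in general one only has the two-sided bound $g(2\|w_s\|_p)\,Q_{w_s}(\dot w_s)\le f_p''(s)\le Q_{w_s}(\dot w_s)$, and the whole content of the theorem is the lower bound (if the $\ddot w_s$ contribution vanished, the statement would hold without the $1/g$ factor, which is false; the factor is there precisely because $f_p''$ can fall below $Q_{w_s}(\dot w_s)$). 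Your sketch never produces this quantitative lower bound; invoking ``the standard sin distortion of Jacobi fields'' is not an argument in this Banach--trace setting.

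The paper's proof supplies exactly the two ingredients your plan lacks. First, since $\beta(s)=e^ve^{sz}$, the formula $d\exp_x(y)=\int_0^1e^{(1-t)x}ye^{tx}dt$ gives $z=\int_0^1e^{-tw_s}\dot w_s e^{tw_s}dt$, which lets one rewrite $f_p'(s)=(-1)^{p/2}p\,Tr(zw_s^{p-1})$ with the \emph{fixed} element $z$; differentiating this expression yields $f_p''(s)=H_{w_s}(\dot w_s,z)=\int_0^1H_{w_s}(\delta_s(0),\delta_s(t))\,dt$ with $\delta_s(t)=e^{-tw_s}\dot w_s e^{tw_s}$, so $\ddot w_s$ never appears and no second-variation formula is needed. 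Second, the sinc factor comes from a concrete spherical estimate in the pre-Hilbert space $(\bpe_{ah}/K_s,H_{w_s})$: the curve $\delta_s$ lies on a sphere of radius $R_s=Q_{w_s}(\dot w_s)^{1/2}$, its speed is $Q_{w_s}^{1/2}([w_s,\dot w_s])\le 2\|w_s\|R_s$ by Remark \ref{jesiano}(1) and $Ad$-invariance of $Q_{w_s}$, hence the angle satisfies $\alpha_s(t)\le 2t\|w_s\|_p<\pi$, and integrating $\cos\alpha_s(t)\ge\cos(2t\|w_s\|_p)$ gives $f_p''(s)\ge R_s^2\,g(2\|w_s\|_p)$; the degenerate case $R_s=0$ is handled separately by Cauchy--Schwarz, giving $f_p'(s)=f_p''(s)=0$ and the equality clause. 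Until you replace your ``vanishing curvature term'' step by an estimate of this kind, the first inequality (and with it the nonnegativity of $f_p''$ that your second inequality relies on) is unproved. Incidentally, your worry about smooth dependence of the minimal logarithm is not needed: since $\|\beta(s)-1\|<r_p<1$, one can simply take the analytic branch of $\log$, so $s\mapsto w_s$ is automatically smooth.
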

\begin{proof}
We may assume that $u=1$ since the action of unitary elements is isometric. Let
$v,z\in \bpe_{ah}$ such that $\beta(s)=e^ve^{sz}$. Since
$$
\|\beta(s)-1\|\le \|\beta(s)-1\|_p\le d_p(\beta(s),1)<r_p<1,
$$
the curve $\beta$ has analytic logarithm given by the smooth curve of (anti-hermitian) operators $w_s=\log(\beta(s))\in \bpe_{ah}$. Let $\gamma_s(t)=e^{tw_s}$. Since $\|w_s\|\le \|w_s\|_p<r_p<\pi/4$ ($p\ge 4$), then $\gamma_s$ is a short geodesic joining $1$ and $\beta(s)$, of length
$\|w_s\|_p=d_p(1,\beta(s))$. Then $f_p(s)=\|w_s\|_p^p=Tr((-w_s^2)^\frac{p}{2})=(-1)^{\frac{p}{2}}Tr(w_s^p)$, hence 
$$
f'_p(s)=
(-1)^{\frac{p}{2}}p\, Tr(w_s^{p-1} \dot{w_s})=\frac{1}{p-1} H_{w_s}(\dot{w_s},w_s).
$$
For $x,y\in \bpe_{ah}$, we have the well-known formula $d\,
\exp_x(y)=\int_0^1 e^{(1-t)x}ye^{tx}\,dt$ (cf. \cite[Lemma 3.3]{upe}). Since 
$e^{w_s}=e^ve^{sz}$, then $e^{-w_s}\; d\; \exp_{w_s} (\dot{w_s}) =z$, namely
\begin{equation}\label{difexp}
z= \int_0^1 e^{-tw_s} \dot{w_s} e^{tw_s}\;dt.
\end{equation}
Thus $Tr(w_s^{p-1} \dot{w_s} )=\int_0^1 Tr(w_s^{p-1}e^{-tw_s} \dot{w_s} e^{tw_s})\;dt=Tr(zw_s^{p-1})$.
Hence 
$$
f''_p(s)=
(-1)^{\frac{p}{2}}p \sum_{k=0}^{p-2}\, Tr(w_s^{p-2-k}\dot{w_s}w_s^k z)=H_{w_s}(\dot{w_s},z),
$$
and again by equation (\ref{difexp}) above,
$$
f_p''(s)=\int_0^1 H_{w_s}(\delta_s(0),\delta_s(t))dt
$$
where $\delta_s(t)=e^{-tw_s}\dot{w_s} e^{tw_s}$. Suppose that for this value of $s\in[0,1]$, $R_s^2:=Q_{w_s}(\dot{w_s})\ne 0$, where $Q_{w_s}$ is the quadratic form associated to $H_{w_s}$. Then, if $K_s\subset \bpe_{ah}$ is the null space of $H_{w_s}$, consider the quotient space $\bpe_{ah}/K_s$ equipped with the inner product $H_{w_s}(\cdot,\cdot)$. An elementary computation shows that $\delta_s$ lives in a sphere of radius $R_s$ of this pre-Hilbert space, hence
$$
H_w(\delta_s(0),\delta_s(t))=R_s^2 \cos\alpha_s(t),
$$
where $\alpha_s(t)$ is the angle subtended by $\delta_s(0)$ and $\delta_s(t)$.

Note that
$$
\frac{d}{d t} \delta_s(t)=e^{-tw_s}[w_s,\dot{w}_s]e^{tw_s},
$$
and that $Q_s$ is $Ad_{e^{-tw_s}}$-invariant:
$$
Q_s(e^{-tw_s}ae^{tw_s})=Q_s(a).
$$
Then, reasoning in the sphere,
\begin{eqnarray}
R_s\alpha_s(t)&\le& L_0^t(\delta_s)=\int_0^t Q_{w_s}^{\frac12}(e^{-tw_s}[w_s,\dot{w_s}]e^{t w_s})\,dt\nonumber\\
& =&\int_0^t Q_{w_s}^{\frac12}([w_s,\dot{w_s}])\,dt=t\,Q_{w_s}^{\frac12}([w_s,\dot{w_s}]).\nonumber
\end{eqnarray}
By property $1.$ of Remark \ref{jesiano},
$$
R_s\alpha_s(t)\le t\, 2\|w_s\| R_s \le t 2\|w_s\|_p R_s.
$$
for any $t\in [0,1]$. Hence, since $\|w_s\|_p<\frac{\pi}{2}$, then
$$
\cos(\alpha_s(t))\ge \cos(2t\|w_s\|_p),
$$
and we obtain
$$
f''_p(s)\ge R_s^2\frac{\sin(2\|w_s\|_p)}{2\|w_s\|_p}>0
$$
integrating the expression for $f_p''(s)$ above. Note that the Cauchy-Schwarz inequality for $H_{w_s}$ implies that
$$
(p-1)^2f'_p(s)^2=H_{w_s}^2(w_s,\dot{w_s})\le Q_{w_s}(w_s) Q_{w_s}(\dot{w_s})=p(p-1)f_p(s) R_s^ 2,
$$
and therefore
$$
(p-1)f'_p(s)^2 \le pf_p(s) f_p''(s) \frac{1}{g(2\|w_s\|_p)}.
$$
For the chosen value of $r_p=\frac12 g^{-1}(\frac{1}{p-1})$, one has that 
$$
\frac{1}{g(2\|w_s\|_p)}<p-1,
$$
and therefore the right hand inequality of this theorem follows. Note also that since we are supposing $R_s\ne 0$, clearly $f_p''(s)\ne 0$, and thus for such $s$ the second inequality is strict. 

Suppose now that $R_s=Q_{w_s}(\dot{w}_s)=0$ Then, again by the Cauchy-Schwarz inequality for the form $H_{w_s}$,
$$
0\le f_p''(s)=H_{w_s}(\dot{w}_s, z)\le Q_s(\dot{w}_s)^{1/2}Q_s(z)^{1/2}=0,
$$
and similarly
$$
|f_p'(s)|=\frac{1}{p-1}|H_{w_s}(\dot{w}_s, w_s)|\le \frac{1}{p-1}Q_s(\dot{w}_s)^{1/2}Q_s(w_s)^{1/2}=0,
$$
which concludes the proof.
\end{proof}

The following elementary lemma will simplify the proof of the next corollaries.

\begin{lem}\label{fseg}
Let $C,\varepsilon >0$, let $f:(-\varepsilon,1+\varepsilon)\to \mathbb R$ be a non constant real analytic function such that $f'(s)^2\le C f''(s)$ for any $s\in [0,1]$. Then $f$ is strictly convex in $(0,1)$ and moreover there is at most one point $\alpha\in (0,1)$ where $f''_p(\alpha)=0$.
\end{lem}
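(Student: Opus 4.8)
The plan is to read off convexity directly from the inequality and then use real-analyticity for the rigidity statements. \textbf{Step 1 (convexity).} Since $f'(s)^2\ge 0$, the hypothesis $f'(s)^2\le C f''(s)$ forces $f''(s)\ge 0$ for every $s\in[0,1]$; hence $f$ is convex on $[0,1]$ and $f'$ is non-decreasing there. \textbf{Step 2 ($f''$ cannot vanish on an interval).} Suppose $f''\equiv 0$ on some nonempty subinterval of $(0,1)$. By the identity theorem for real-analytic functions, $f''\equiv 0$ on all of $(-\varepsilon,1+\varepsilon)$, i.e. $f$ is affine, $f(s)=as+b$. Then $a^2=f'(s)^2\le C f''(s)=0$ on $[0,1]$, so $a=0$ and $f$ is constant, contradicting the hypothesis. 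Thus the zero set of $f''$ in $(0,1)$ has no interior, and combined with $f''\ge0$ this gives that $f$ is strictly convex on $(0,1)$ (a convex function which coincides with one of its chords at an interior point is affine along that chord, forcing $f''$ to vanish on a subinterval, which is excluded).

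\textbf{Step 3 (at most one zero of $f''$).} At any $\alpha\in(0,1)$ with $f''(\alpha)=0$ the hypothesis also yields $f'(\alpha)^2\le C f''(\alpha)=0$, so $f'(\alpha)=0$. If there were two such points $\alpha_1<\alpha_2$ in $(0,1)$, then $f'(\alpha_1)=f'(\alpha_2)=0$; since $f'$ is non-decreasing on $[0,1]\supseteq[\alpha_1,\alpha_2]$, we would get $f'\equiv 0$ on $[\alpha_1,\alpha_2]$, hence $f$ constant on $[\alpha_1,\alpha_2]$, and by analyticity $f$ constant on the whole domain --- again a contradiction. Therefore $f''$ vanishes at most once in $(0,1)$.

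I do not expect a serious obstacle here: the two ingredients are the pointwise sign $f''\ge0$ (immediate from the inequality) and the rigidity of non-constant real-analytic functions. The only place that needs a little care is Step 2, namely the precise chain ``$f''\equiv0$ on a subinterval $\Rightarrow$ $f$ affine on the whole interval of analyticity $\Rightarrow$ $f$ constant by the inequality'', which is exactly where the real-analyticity hypothesis (rather than mere $C^2$) is used.
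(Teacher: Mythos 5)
Your argument is correct, but it is organized rather differently from the paper's. The paper works with the zero set of $f'$: by the mean value theorem, between two zeros of $f'$ there is a zero of $f''$, which the hypothesis $f'^2\le Cf''$ converts into another zero of $f'$; analyticity then forces $f'$ to have at most one zero $\alpha$, and strict convexity is obtained by a sign analysis of $f'$ on either side of $\alpha$ followed by a chord/mean-value contradiction to glue the two halves. You instead work with the zero set of $f''$: the pointwise sign $f''\ge 0$ gives convexity, the identity theorem excludes $f''\equiv 0$ on a subinterval (affine plus the hypothesis would force $f$ constant), the standard chord rigidity of convex functions then upgrades convexity to strict convexity, and uniqueness of the zero of $f''$ follows from monotonicity of $f'$ together with the pointwise implication $f''(\alpha)=0\Rightarrow f'(\alpha)=0$. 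Both proofs use the same ingredients (that pointwise implication, positivity of $f''$, and analytic rigidity), but your route is shorter and avoids the paper's gluing step and its implicit claim that $\int f''$ is strictly positive on the relevant subintervals, while the paper's route yields the slightly stronger structural fact that $f'$ itself has at most one zero in the whole interval of analyticity, which is the form in which the lemma is re-used ("arguing as in the proof") in the subsequent corollary; your version supports that use just as well, since the chord argument applies verbatim to $g_p$ there.
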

\begin{proof}
By the mean value theorem, the condition on $f$ implies that for each pair of roots of $f'$, there is another root in between. Since $f$ is analytic and non constant, $C_0(f')$ is an empty set or has one point $\alpha\in (-\varepsilon,1+\varepsilon)$. If $C_0(f')$ does not meet $(0,1)$, then $f''>0$ there and we are done. We assume then that there exists $\alpha$ in $(0,1)$ such that $f'(\alpha)=0$. Note that  $-f'(x)=f'(\alpha)-f'(x)=\int_x^{\alpha}f''(s)ds> 0$ for any $x\in (-\varepsilon,\alpha]$ and $f'(y)=f'(y)-f_p'(\alpha)=\int_{\alpha}^y f''(s)ds>0$ for any $y\in [\alpha,1+\varepsilon)$, hence $f'$ is strictly negative in $(-\varepsilon,\alpha)$ and strictly positive in $(\alpha,1+\varepsilon)$, so $f$ is strictly convex in each interval. If $f(\alpha)<[f(1)-f(0)]\alpha+f(0)$, we are done. If not, by the mean value theorem there exists $x\in (0,\alpha)$, $y\in (\alpha,1)$ such that
$$
f(1)-f(0)=\frac{f(\alpha)-f(0)}{\alpha}=f'(x)<0
$$
and 
$$
f(1)-f(0)=\frac{f(1)-f(\alpha)}{1-\alpha}=f'(y)>0,
$$
a contradiction.
\end{proof}

\begin{coro}
If $u,\beta$ are as in Theorem \ref{convexidad_p}, then $f_p$ is strictly convex.
\end{coro}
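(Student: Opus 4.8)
The plan is to apply Lemma \ref{fseg} to $f=f_p$, so the work reduces to checking its two hypotheses. As in the proof of Theorem \ref{convexidad_p} we may assume $u=1$. First I would extend the picture slightly past $[0,1]$: the geodesic $\beta(s)=e^v e^{sz}$ is defined for all $s\in\mathbb R$, and since $\{s:d_p(1,\beta(s))<r_p\}$ is open and contains the compact set $[0,1]$, it contains an interval $(-\varepsilon,1+\varepsilon)$. On that interval $\|\beta(s)-1\|\le\|\beta(s)-1\|_p\le d_p(1,\beta(s))<r_p<1$, so $w_s=\log\beta(s)\in\bpe_{ah}$ is real analytic and hence so is $f_p(s)=(-1)^{p/2}Tr(w_s^p)$; moreover the computation of Theorem \ref{convexidad_p} applies there verbatim, yielding $(p-1)f_p'(s)^2\le p(p-1)f_p(s)f_p''(s)$. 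Since $f_p(s)<r_p^p$ throughout, this is exactly the inequality $f_p'(s)^2\le C f_p''(s)$ with $C=p\,r_p^p>0$ required by Lemma \ref{fseg}.

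The remaining hypothesis, that $f_p$ not be constant, is where the only genuine work lies, and I would argue by contradiction. Suppose $f_p\equiv c$. Then $f_p''\equiv 0$; but the proof of Theorem \ref{convexidad_p} gives $f_p''(s)\ge R_s^2\, g(2\|w_s\|_p)$ with $g(2\|w_s\|_p)>0$ (since $\|w_s\|_p<\pi/4$), so $R_s^2=Q_{w_s}(\dot w_s)=0$ for every $s$. By property $2.$ of Remark \ref{jesiano}, $Q_{w_s}(\dot w_s)=0$ gives $\dot w_s w_s^{p/2-1}=0$ and $w_s\dot w_s+\dot w_s w_s=0$, so that $w_s\dot w_s=\dot w_s w_s=0$ and $\frac{d}{ds}w_s^2\equiv 0$. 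Hence $A:=-w_s^2\ge 0$ is a fixed compact operator with $A\dot w_s=\dot w_s A=0$, and $\beta(s)=e^{w_s}=\cos(\sqrt A)+\frac{\sin(\sqrt A)}{\sqrt A}\,w_s$ with the functional-calculus factors independent of $s$; plugging this into the geodesic identity $\beta(s)^*\dot\beta(s)\equiv z$ forces $\dot w_s$ to be constant, say $\dot w_s\equiv z$, and then $w_s\dot w_s\equiv 0$ yields, upon differentiating in $s$, $z^2=0$, so $z=0$ since $z$ is anti-hermitian — contradicting that $\beta$ is non-constant. Therefore $f_p$ is non-constant, Lemma \ref{fseg} applies, $f_p$ is strictly convex on $(0,1)$, and continuity at the endpoints upgrades this to convexity on $[0,1]$.

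The main obstacle is precisely this non-constancy step: a priori a geodesic could wind ``around'' $u$ staying at constant distance from it, and what excludes that is the rigidity coming from the equality analysis of Theorem \ref{convexidad_p} (which forces $Q_{w_s}(\dot w_s)=0$) together with the hypothesis that $u$ lies on no prolongation of $\beta$. Everything else — the analytic extension beyond $[0,1]$ and the translation of Theorem \ref{convexidad_p} into the hypothesis of Lemma \ref{fseg} — is routine bookkeeping.
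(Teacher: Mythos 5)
Your proof is correct, and its skeleton is the paper's: reduce to Lemma \ref{fseg} (your explicit extension of $w_s$ and $f_p$ to $(-\varepsilon,1+\varepsilon)$ is a piece of bookkeeping the paper leaves tacit, and welcome), then rule out the constant case via $R_s\equiv 0$ and property $2$ of Remark \ref{jesiano}. Where you genuinely diverge is the endgame of the constant case. The paper goes: $w_s z=0$, hence $vz=0$, hence $w_s=v+sz$ by Baker--Campbell--Hausdorff, and then invokes strict convexity of the norm of $\bpe$ together with the hypothesis that $u$ lies on no prolongation of $\beta$. You instead exploit $w_s\dot w_s=\dot w_s w_s=0$ to see that $A=-w_s^2$ is constant, write $\beta(s)=\cos(\sqrt A)+{\rm sinc}(\sqrt A)\,w_s$, and use $\beta^*\dot\beta\equiv z$ to force $\dot w_s\equiv z$; differentiating $w_sz=0$ then gives $z^2=0$, so $z=0$, contradicting non-constancy of $\beta$ alone. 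This buys a more self-contained degenerate-case analysis: no appeal to strict convexity of the $p$-norm and no use of the prolongation hypothesis at that point (it is still needed earlier, e.g.\ to know $f_p>0$ so that the theorem's inequality yields $f_p'^2\le p\,r_p^p f_p''$). Two small touch-ups: for $p\ge 6$ property $2$ gives only the sandwiched relations $w_s^l(w_s\dot w_s+\dot w_s w_s)w_s^m=0$, not the bare anticommutator, but this is harmless since $\dot w_s w_s^{\frac{p}{2}-1}=0$ alone yields $w_s\dot w_s=\dot w_s w_s=0$ by the functional calculus of the normal operator $w_s$ (the same "elementary computation" the paper alludes to); and your last sentence should claim \emph{strict} convexity on $[0,1]$ — the standard argument (equality at an interior point of a chord forces affinity on a subinterval of $(0,1)$) gives exactly that.
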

\begin{proof}
Since $f_p$ is analytic and $f_p'(s)^2\le C f''(s)$, if $f_p$ is non constant, then $f_p$ is strictly convex  by Lemma \ref{fseg}. Assume then that $f_p$ is constant with $f_p(s)=f_p(0)=\|v\|_p$ for any $s\in [0,1]$. Note that $R_s\equiv 0$, and then by property $2.$ of Remark \ref{jesiano}, $w_s^{\frac{p}{2}-1}z=0$ and an elementary computation involving the functional calculus of anti-hermitian operators shows that $w_s z=0$. In particular $vz=0$ which implies $w_s=v+sz$ by the Baker-Campbell-Hausdorff formula. But since the norm of $\bpe$ is strictly convex, $w_s$ cannot have constant norm unless $v$ is a multiple of $z$, and in that case, $u$ and $\beta$ are aligned contradicting the assumption of the theorem.
\end{proof}

\begin{coro}
Let $u,\beta$ be as in Theorem \ref{convexidad_p}. Then $g_p(s)=d_p(u,\beta(s))=f_p(s)^{\frac1p}$ is strictly convex.
\end{coro}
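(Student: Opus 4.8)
The plan is to reduce the convexity of $g_p=f_p^{1/p}$ to a second-order inequality for $f_p$, notice that Theorem \ref{convexidad_p} falls just short of supplying it, and then recover the missing estimate by a closer look inside the proof of that theorem. As before we may take $u=1$; I will keep the notation $w_s=\log\beta(s)$, $R_s^2=Q_{w_s}(\dot w_s)$ and $\delta_s(t)=e^{-tw_s}\dot w_s e^{tw_s}$ from the proof of Theorem \ref{convexidad_p}, together with the facts, established there, that $g_p(s)=\|w_s\|_p$ and $f_p(s)=g_p(s)^p>0$ on $[0,1]$. Differentiating $g_p=f_p^{1/p}$ twice gives $g_p''=\frac{1}{p^2}f_p^{(1-2p)/p}(pf_pf_p''-(p-1)(f_p')^2)$, so $g_p$ is convex precisely when $pf_pf_p''\ge(p-1)(f_p')^2$ on $[0,1]$. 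The difficulty is that Theorem \ref{convexidad_p} only yields $(p-1)(f_p')^2\le pf_pf_p''\,g(2\|w_s\|_p)^{-1}$, and since $g(2\|w_s\|_p)=\mathrm{sinc}(2\|w_s\|_p)<1$ this is strictly weaker than what we need; \emph{bridging this discrepancy is the crux of the argument}.

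To do so I would sharpen the lower bound on $f_p''$ obtained in the proof of the theorem. Working in the pre-Hilbert space $\bpe_{ah}/K_s$ with inner product $H_{w_s}$, a short computation with the trace shows that the component of $\delta_s(t)$ along $w_s$ is independent of $t$: indeed $H_{w_s}(w_s,\delta_s(t))=(p-1)f_p'$ for every $t$. Hence $\delta_s(t)=\frac{f_p'}{pf_p}w_s+\delta_s^{\perp}(t)$ with $\delta_s^{\perp}(t)$ orthogonal to $w_s$ and of constant norm $\tau_s$, where $\tau_s^2=R_s^2-\frac{(p-1)(f_p')^2}{pf_p}\ge0$ by Cauchy--Schwarz. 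Substituting into the identity $f_p''=\int_0^1H_{w_s}(\delta_s(0),\delta_s(t))\,dt$ of the proof of Theorem \ref{convexidad_p} gives $f_p''=\frac{(p-1)(f_p')^2}{pf_p}+\int_0^1H_{w_s}(\delta_s^{\perp}(0),\delta_s^{\perp}(t))\,dt$.

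Next I would estimate the perpendicular term. Since $[w_s,\dot w_s]=[w_s,\dot w_s-\frac{f_p'}{pf_p}w_s]$, property $1$ of Remark \ref{jesiano} applied to $\dot w_s-\frac{f_p'}{pf_p}w_s$ (which has $Q_{w_s}$-value $\tau_s^2$) gives $Q_{w_s}([w_s,\dot w_s])^{1/2}\le2\|w_s\|\,\tau_s$; running the sphere argument of the proof of Theorem \ref{convexidad_p} on the circle of radius $\tau_s$ along which $\delta_s^{\perp}$ moves, the angle between $\delta_s^{\perp}(0)$ and $\delta_s^{\perp}(t)$ is at most $2t\|w_s\|\le2\|w_s\|_p<2r_p<\pi$, so $H_{w_s}(\delta_s^{\perp}(0),\delta_s^{\perp}(t))\ge\tau_s^2\cos(2t\|w_s\|)$ and therefore $\int_0^1H_{w_s}(\delta_s^{\perp}(0),\delta_s^{\perp}(t))\,dt\ge\tau_s^2\,g(2\|w_s\|)\ge0$. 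Combining with the previous paragraph, $pf_pf_p''=(p-1)(f_p')^2+pf_p\int_0^1H_{w_s}(\delta_s^{\perp}(0),\delta_s^{\perp}(t))\,dt\ge(p-1)(f_p')^2$, with equality only when $\tau_s=0$; by the first step, $g_p''\ge0$ on $[0,1]$, so $g_p$ is convex.

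For strict convexity I would argue by contradiction. If $g_p$ is not strictly convex it is affine on some nondegenerate interval $J\subset(0,1)$, so $g_p''\equiv0$ there, and the identity above for $pf_pf_p''$ (the integral being strictly positive) forces $\tau_s\equiv0$ on $J$. Property $2$ of Remark \ref{jesiano} then gives $(\dot w_s-\frac{f_p'}{pf_p}w_s)w_s^{\frac p2-1}=0$, and the functional-calculus argument of the preceding corollary upgrades this to $(\dot w_s-\frac{f_p'}{pf_p}w_s)w_s=0$; together with its adjoint this gives $[\dot w_s,w_s]=0$ on $J$, hence $\frac{d}{ds}e^{w_s}=e^{w_s}\dot w_s=e^{w_s}z$, so $\dot w_s=z$ and $w_s=w_{s_1}+(s-s_1)z$ on $J$ for a fixed $s_1\in J$. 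Then $g_p(s)=\|w_{s_1}+(s-s_1)z\|_p$ is affine on $J$, and since the $p$-norm is strictly convex this is possible only if $z=0$ (so $\beta$ is constant) or $w_{s_1}=cz$ for a scalar $c$, in which case $e^{v}=\beta(s_1)e^{-s_1z}=e^{(c-s_1)z}$ and hence $\beta(s_1-c)=1=u$ lies on a prolongation of $\beta$ — in either case contradicting the hypotheses of Theorem \ref{convexidad_p}. Thus $g_p$ is strictly convex. The one genuinely new ingredient over Theorem \ref{convexidad_p} is the observation that $\delta_s(t)$ keeps a constant component along $w_s$, so that the $\mathrm{sinc}$-loss is charged only to the part of $\delta_s(t)$ perpendicular to $w_s$ — which is precisely the slack available.
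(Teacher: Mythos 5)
Your proof is correct, but it takes a genuinely different route from the paper's, which is much shorter: the paper simply computes $g_p''(s)=\frac{1}{p^2}f_p^{\frac1p-2}\bigl[pf_p(s)f_p''(s)-(p-1)f_p'(s)^2\bigr]$, asserts that the bracket is nonnegative by Theorem \ref{convexidad_p}, and obtains strictness from Lemma \ref{fseg} (at most one zero of $f_p''$). You instead re-enter the proof of the theorem and split $\delta_s(t)$ into its $H_{w_s}$-component along $w_s$ (constant in $t$, as you observe) plus an orthogonal part, arriving at $f_p''(s)=\frac{(p-1)f_p'(s)^2}{pf_p(s)}+\int_0^1H_{w_s}(\delta_s^\perp(0),\delta_s^\perp(t))\,dt$ with the integral bounded below by $\tau_s^2\,g(2\|w_s\|)\ge 0$. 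This refinement is not in the paper, and it buys something real: the inequality actually displayed in Theorem \ref{convexidad_p}, $(p-1)f_p'^2\le pf_pf_p''\,g(2\|w_s\|_p)^{-1}$, is (since $g<1$) formally weaker than the bound $pf_pf_p''\ge(p-1)f_p'^2$ that the corollary needs -- taken literally it only yields convexity of $g_p^{p-1}$ -- so your decomposition, which charges the sinc-loss only to the part of $\dot w_s$ orthogonal to $w_s$, supplies exactly the estimate the paper's one-line citation leaves implicit; a by-product is that the radius $r_p$ plays no role here, $\|w_s\|_p<\pi/2$ suffices. For strictness you also diverge: rather than Lemma \ref{fseg}, you show an affine piece of $g_p$ forces $\tau_s\equiv 0$, hence via property 2 of Remark \ref{jesiano} and functional calculus $[\dot w_s,w_s]=0$, $w_s=w_{s_1}+(s-s_1)z$, and strict convexity of the $p$-norm then contradicts the hypotheses that $\beta$ is non constant and $u$ lies on no prolongation of $\beta$ -- essentially the argument the paper uses for the strict convexity of $f_p$ in the preceding corollary. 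In short: the paper's proof is brief and leans on the theorem as stated; yours is longer but self-contained and closes the quantitative gap explicitly.
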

\begin{proof}
Computing $g''_p(s)$, we obtain
$$
g_p''(s)=\frac{1}{p^2}f^{\frac1p -2}\left[pf_p(s)f_p''(s)-(p-1)f_p'(s)^2 \right]
$$
which is positive by the previous theorem. Moreover it is strictly positive if and only if $f''_p(s)\ne 0$. By Lemma \ref{fseg}, there is at most one point $\alpha\in (0,1)$ where $f''_p(s)=0$, and arguing as in the proof of that lemma, it follows that $g_p$ is strictly convex.
\end{proof}

\begin{rem}
Note that for $p=2$  the above statements are incomplete: $f_2$ is certainly strictly convex if $\beta$ and $u$ are as above, but it is not clear whether $g_2$ is convex. This last assertion is equivalent to
$$
|Tr(w_sz)|\le |Tr(\dot{w_s}z)|^{\frac12}\|w_s\|_2 =\mu_s \|w_s\|_2 \|\dot{w_s}\|_2
$$
where $\mu_s=\int_0^1 \cos\alpha_s(t)dt\le 1$ is a positive constant, so it is easy to check that $g_2$ is convex if $v$ and $z$ commute.
\end{rem}

Our main result on the convexity of the rectifiable distance in the Fredholm group follows.

\begin{teo}
Let $u\in U_c({\cal H})$, $\beta:[0,1]\to {\cal U}_c({\cal H})$ a geodesic such that $d_\infty(u,\beta)<\frac{\pi}{2}$. Then for $s\in[0,1]$, the function $g(s)=d_\infty(u,\beta(s))$ is convex.
\end{teo}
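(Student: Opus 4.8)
\noindent\emph{Approach.} The plan is to derive this from the already-proved convexity in the $p$-Schatten unitary groups (the Corollary to Theorem~\ref{convexidad_p} on the convexity of $g_p$) by a limiting argument $p\to\infty$. The obstruction is that a generic compact anti-hermitian operator has infinite $p$-norm, so the geodesic $\beta$ need not lie in any $\upe$ and Theorem~\ref{convexidad_p} cannot be applied to $\beta$ itself; one must approximate $\beta$, inside $U_c(\h)$, by geodesics living in $\upe$ while simultaneously letting $p\to\infty$.

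First, by the isometric action of $U_c(\h)$ reduce to $u=1$, and dismiss the trivial case of a constant $\beta$; using $\beta(0)=e^v$ (\cite{harpe}), write $\beta(s)=e^ve^{sz}$ with $v,z\in\k(\h)_{ah}$, $z\neq0$. For each $s$, the hypothesis $d_\infty(1,\beta(s))<\pi/2$ forces, by applying the sphere estimate from the proof of Lemma~\ref{corta} to a norming eigenvector of an extremal eigenvalue $e^{i\theta}$ of $\beta(s)$, that every eigenvalue $e^{i\theta_j}$ of $\beta(s)$ satisfies $|\theta_j|<\pi/2$; since $\beta(s)-1$ is compact the $\theta_j$ accumulate only at $0$, so the principal logarithm $w_s:=\log\beta(s)\in\k(\h)_{ah}$ is well defined with $\|w_s\|<\pi/2$, and by Lemma~\ref{corta} the curve $t\mapsto e^{tw_s}$ realizes the distance, giving $g(s)=d_\infty(1,\beta(s))=\|w_s\|$. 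As $s\mapsto w_s$ is norm-continuous on the compact interval $[0,1]$, we have $M:=\sup_{s\in[0,1]}\|w_s\|<\pi/2$.

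Now choose finite rank $v^{(n)},z^{(n)}\in\k(\h)_{ah}$ with $v^{(n)}\to v$, $z^{(n)}\to z$ in operator norm, and set $\beta^{(n)}(s)=e^{v^{(n)}}e^{sz^{(n)}}$. Then $\beta^{(n)}\to\beta$ uniformly on $[0,1]$; $\beta^{(n)}$ is a non-constant geodesic of $\upe$ for every $p$ (since $e^{v^{(n)}}-1$ and $z^{(n)}$ are finite rank); and for $n$ large $w_s^{(n)}:=\log\beta^{(n)}(s)$ is defined, lies in $\k(\h)_{ah}$, is finite rank (being $0$ off a subspace of dimension $\le N_n$, with $N_n$ independent of $s$), converges to $w_s$ uniformly in $s$, and $M_n:=\sup_s\|w_s^{(n)}\|\to M<\pi/2$. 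Recalling that $r_p=\frac12 g^{-1}(\frac1{p-1})$ increases to $\pi/2$, and using $\|w_s^{(n)}\|_p\le N_n^{1/p}\|w_s^{(n)}\|\le N_n^{1/p}M_n$, pick an even integer $p(n)\ge4$ with $p(n)\to\infty$ fast enough that $N_n^{1/p(n)}\to1$. Then for $n$ large $N_n^{1/p(n)}M_n<r_{p(n)}$, so $\beta^{(n)}\subset B_{p(n)}(1,r_{p(n)})$ and, by minimality of short geodesics in $\upe$, $g_n(s):=d_{p(n)}(1,\beta^{(n)}(s))=\|w_s^{(n)}\|_{p(n)}$. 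By the Corollary to Theorem~\ref{convexidad_p} (convexity of $g_p$), $g_n$ is convex on $[0,1]$ (in the exceptional case where $1$ lies on the prolongation of $\beta^{(n)}$, the curve $\beta^{(n)}$ is a one-parameter group through $1$ and $g_n(s)=|s-s_0|\,\|z^{(n)}\|_{p(n)}$ is plainly convex). Finally, $\|w_s^{(n)}\|\le\|w_s^{(n)}\|_{p(n)}\le N_n^{1/p(n)}\|w_s^{(n)}\|$ together with $N_n^{1/p(n)}\to1$ and $w_s^{(n)}\to w_s$ yield $g_n(s)\to\|w_s\|=g(s)$ for every $s\in[0,1]$; a pointwise limit of convex functions is convex, so $g$ is convex.

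The main obstacle is the quantitative bookkeeping in the previous paragraph: one must inflate the ranks $N_n$, in order to approximate $v$ and $z$ in operator norm, while still keeping the norms $\|w_s^{(n)}\|_{p(n)}$ --- uniformly in $s\in[0,1]$ --- simultaneously below the threshold $r_{p(n)}\nearrow\pi/2$ and asymptotically equal to $\|w_s^{(n)}\|$. This dictates how fast $p(n)$ must grow relative to $\log N_n$, and requires checking that the logarithms $w_s^{(n)}$, their ranks, and the comparison estimates between $\|\cdot\|$ and $\|\cdot\|_{p(n)}$ are all controlled uniformly in $s$.
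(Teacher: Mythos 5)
Your argument is correct, and at bottom it is the same strategy as the paper's: reduce to $u=1$, identify $g(s)=\|w_s\|$ for the principal logarithm $w_s=\log\beta(s)$, approximate $v,z$ by finite rank operators, invoke the convexity of the $p$-Schatten geodesic distance, and pass to the limit using that $p$-norms of finite rank operators tend to the operator norm. The difference lies in how the two limits are organized. The paper fixes $n$, asserts convexity of $g_{n,p}(s)=d_p(1,e^{v_n}e^{sz_n})$ for every even $p\ge 4$, lets $p\to\infty$ and only then $n\to\infty$; for fixed $p$ and large rank the hypothesis $\beta^{(n)}\subset B_p(1,r_p)$ of Theorem \ref{convexidad_p} need not hold, so what is really used there is the operator-norm form of the convexity criterion (the one made explicit in Section 3, resting on $\|w_{n,s}\|<\pi/4$). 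You instead couple $p=p(n)$ to the rank $N_n$ so that $N_n^{1/p(n)}\to 1$, which makes the $d_{p(n)}$-ball hypothesis of Theorem \ref{convexidad_p} hold literally and at the same time forces $\|w_s^{(n)}\|_{p(n)}\to\|w_s\|$; this diagonal bookkeeping is a valid, and in fact more scrupulous, way of using the corollary exactly as stated. One small inaccuracy: in your exceptional case the identity $g_n(s)=|s-s_0|\,\|z^{(n)}\|_{p(n)}$ presumes that the principal logarithm of $e^{(s-s_0)z^{(n)}}$ is $(s-s_0)z^{(n)}$, which requires $|s-s_0|\,\|z^{(n)}\|<\pi$ and is not automatic when $s_0$ lies far outside $[0,1]$; however, since the spectra of $\beta^{(n)}(s)$ stay uniformly away from $-1$, the winding integers of the eigenvalue branches are constant in $s\in[0,1]$, so $w_s^{(n)}$ is still an affine function of $s$ and $g_n$ is the $p(n)$-norm of an affine map, hence convex, and the conclusion is unaffected.
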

\begin{proof}
We assume that $u=1$, $\beta(s)=e^ve^{sz}$ as before; here $v$ and $z$ are compact. Since $d_\infty(u,\beta(s))<\frac{\pi}{2}$, then $w_s=\log(\beta(s))$ is a compact anti-hermitian operator, with $\|w_s\|=d(1,\beta(s))=g(s)$. Let $q_n$ be an increasing sequence of finite rank projections which converge strongly to $1$, and consider $v_n=q_nvq_n$ and $z_n=q_nzq_n$. Note that since $v,z$ are compact, $v_n\to v$ and $z_n\to z$ in norm, and therefore $e^{v_n}e^{sz_n}\to e^ve^{sz}$ in norm for all $s\in[0,1]$. Therefore there exists $n_0$ such that for all $n\ge n_0$, $\|e^{v_n}e^{sz_n}-1\|<\pi/4$ for all $s\in[0,1]$. Let $w_{n,s}=\log(e^{v_n}e^{sz_n})$. It follows that the map
$$
g_{n,p}(s)=d_p(1,e^{v_n}e^{sz_n})=\|w_{n,s}\|_p,
$$ 
is convex in $[0,1]$. Note that $w_{n,s}$ is a finite rank operator, thus 
$$
\lim_{p\to \infty}\|w_{n,s}\|_p=\|w_{n,s}\|=:g_{n,\infty}(s).
$$
Thus $g_{n,\infty}$ is convex.
Finally, by continuity of the functional calculus,
$$
g(s)=\|w_s\|=\lim_{n\to \infty}\|w_{n,s}\|=\lim_{n\to\infty} g_{n,\infty}(s)
$$
is convex for $s\in[0,1]$.
\end{proof}

\begin{coro}
The radius of convexity of the geodesic balls in $U_c({\cal H})$ is $\frac{\pi}{4}$.
\end{coro}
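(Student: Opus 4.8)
The plan is to read the corollary off the preceding theorem; the only delicate point is the optimality of the constant. Recall that a geodesic ball $B(u_0,r)=\{u\in U_c(\h):d_\infty(u_0,u)<r\}$ is convex when any two of its points are joined inside $B(u_0,r)$ by a minimizing geodesic, and that, by homogeneity, the radius of convexity is independent of the centre $u_0$. So the statement splits into: every ball of radius $r\le\pi/4$ is convex, and no ball of radius $r>\pi/4$ is.

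For the first half I would fix $u_0$, a radius $r\le\pi/4$, and $u_1,u_2\in B(u_0,r)$, and take a minimizing geodesic $\beta\colon[0,1]\to U_c(\h)$ from $u_1$ to $u_2$ (it exists by Lemma \ref{corta}), parametrized proportionally to arc length, so that $d_\infty(u_1,\beta(s))=sD$ and $d_\infty(\beta(s),u_2)=(1-s)D$, where $D=d_\infty(u_1,u_2)\le d_\infty(u_0,u_1)+d_\infty(u_0,u_2)<2r$. The triangle inequality, applied on each side, gives
\[
d_\infty(u_0,\beta(s))<\min\{\,r+sD,\ r+(1-s)D\,\}=r+D\min\{s,1-s\}\le r+\frac{D}{2}<2r\le\frac{\pi}{2}
\]
for every $s\in[0,1]$, hence $d_\infty(u_0,\beta)<\pi/2$. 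By the previous theorem $g(s)=d_\infty(u_0,\beta(s))$ is then convex on $[0,1]$, so $g(s)\le(1-s)g(0)+sg(1)<r$ and $\beta(s)\in B(u_0,r)$ for all $s$. Thus $B(u_0,r)$ is convex and the radius of convexity is at least $\pi/4$. (If one wants uniqueness of the joining geodesic built into the definition of convexity, it follows here from $\|u_1-u_2\|\le d_\infty(u_1,u_2)<2$.)

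The reverse inequality is the main obstacle. The estimate above is sharp as far as the triangle inequality alone is concerned — the bound $r+D/2$ tends to $2r$ as $D\to 2r$ — so for $r>\pi/4$ the hypothesis $d_\infty(u_0,\beta)<\pi/2$ of the theorem may genuinely fail, and to conclude non-convexity one must exhibit, for each $r\in(\pi/4,\pi/2)$, an explicit triple $u_0,u_1,u_2$ with $d_\infty(u_0,u_1),d_\infty(u_0,u_2)<r$ whose joining geodesic $\beta$ has $\max_s d_\infty(u_0,\beta(s))\ge r$, i.e. a geodesic that bends away from $u_0$. Such a triple cannot live in an $SU(2)$-type block of $U_c(\h)$, where $d_\infty$ is the round metric of the unit $3$-sphere, of convexity radius $\pi/2$; one is forced into $U(n)$ with $n\ge 3$, and the work is the explicit estimate of $\|\log(u_1e^{sz})\|$ along the geodesic, carried out as in Lemma \ref{corta} by comparison with the unit sphere $S_\h$ of $\h$ together with the analyticity of $s\mapsto\log\beta(s)$. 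Granted such an example, no open ball of radius $r>\pi/4$ is convex, and the radius of convexity is exactly $\pi/4$.
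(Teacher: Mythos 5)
Your first half is exactly the argument the paper intends: the corollary is stated with no proof, and the way it follows from the preceding theorem is precisely your triangle-inequality estimate $d_\infty(u_0,\beta(s))<r+D\min\{s,1-s\}\le r+D/2<2r\le\pi/2$, which puts the minimizing geodesic (supplied by Lemma \ref{corta}) inside the hypothesis of the theorem, after which convexity of $g$ keeps $\beta$ inside the ball. That part is correct and complete, and it is essentially the paper's route.

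On the second half, be aware of what you have and have not done. As written, your argument proves only that the convexity radius is \emph{at least} $\pi/4$: the non-convexity of balls of radius $r>\pi/4$ is left conditional on an example (``Granted such an example\dots'') that you do not construct, and your remarks about $SU(2)$ blocks and $U(n)$, $n\ge3$, are a plausibility discussion rather than a proof. So relative to the literal wording ``is $\pi/4$'' there is a gap. However, the paper itself supplies no proof of sharpness either --- the corollary records the radius guaranteed by the convexity theorem, and no counterexample for $r>\pi/4$ appears anywhere in the text --- so you have matched (and, in the lower-bound direction, made explicit) everything the paper actually establishes. If you want to claim equality honestly, you must either exhibit the triple $u_0,u_1,u_2$ with a joining geodesic leaving the ball, or weaken the statement to ``geodesic balls of radius $\pi/4$ are convex.''
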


\section{Convexity in the unitary group of a C$^*$-algebra with a finite trace}

Several results in the previous section remain valid when properly rephrased in the context of a finite C$^*$-algebra $\a$ with a faithful and finite trace $\tau$.  To begin with, Remark \ref{jesiano} was originally stated in this context \cite{cocomata}. Denote by $U_\a$ the unitary group of $\a$. Then the one parameter groups with skew-adjoint exponent $x$, with $\|x\|<\pi$ are the short curves in $U_{\a}$, when we measure them with the $p$-norms induced by the trace (see \cite[Theorem 5.4]{ea} for a proof).  That the same assertion holds if we measure curves with the uniform norm of $\a$ is perhaps well-known, we include a proof of it.

\begin{lem}
Let $x\in \a$ be skew-adjoint, with $\|x\|<\pi$, and $u\in U_\a$. Then the curve $\epsilon(t)=ue^{tx}$ has minimal length for $t\in[0,1]$. Any pair $u,v\in U_\a$ such that $\|u-v\|<2$ can be joined by such a curve.
\end{lem}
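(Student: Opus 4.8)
The plan is to imitate the proof of Lemma~\ref{corta}: push the problem into the unit sphere of a Hilbert space, where the length of a curve is bounded below by the intrinsic arccosine distance. Since $\a$ carries no compactness, the norming eigenvector of $x^2$ used there must be replaced by a family of \emph{approximate} norming vectors obtained from the spectral theorem. Concretely, I would first reduce to $u=1$ (left translations are isometric) and fix a faithful representation $\pi:\a\to\b(\h)$ on a Hilbert space $\h$; being a faithful $*$-homomorphism $\pi$ is isometric, $\pi(x)$ is skew-adjoint with $\|\pi(x)\|=\|x\|<\pi$, and $\pi(e^{x})=e^{\pi(x)}$. Writing $\pi(x)=i\pi(H)$ with $H=H^{*}\in\a$, the positive operator $-\pi(x)^{2}$ has norm $\|x\|^{2}$, so $\|x\|^{2}\in\sigma(-\pi(x)^{2})$ and for each $\delta\in(0,\|x\|^{2})$ the spectral projection of $-\pi(x)^{2}$ onto $(\|x\|^{2}-\delta,\|x\|^{2}]$ is nonzero; pick a unit vector $\xi=\xi_{\delta}$ in its range. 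Then the scalar spectral measure $\mu_{\xi}$ of $\pi(H)$ at $\xi$ is a probability measure supported on $\{\lambda:\sqrt{\|x\|^{2}-\delta}<|\lambda|\le\|x\|\}\subset(-\pi,\pi)$.

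Next, exactly as in Lemma~\ref{corta}, I consider the length non-increasing map $\rho:U_{\a}\to S_{\h}=\{\eta\in\h:\|\eta\|=1\}$, $\rho(w)=\pi(w)\xi$; it is well defined since $\pi(w)$ is unitary on $\h$, and $L(\rho\circ\gamma)=\int\|\pi(\dot\gamma)\xi\|\le\int\|\dot\gamma\|=L(\gamma)$ because $\|\pi(a)\|=\|a\|$. If $\gamma:[0,1]\to U_{\a}$ joins $1$ and $e^{x}$, then $\rho\circ\gamma$ joins $\xi$ and $\pi(e^{x})\xi$ in $S_{\h}$, so by the elementary fact that a curve joining two unit vectors on the unit sphere of a Hilbert space has length at least the arccosine of the real part of their inner product (the estimate already used in Lemma~\ref{corta}), $L(\gamma)\ge\arccos\bigl(\mathrm{Re}\langle\xi,\pi(e^{x})\xi\rangle\bigr)$. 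Since $\mathrm{Re}\langle\xi,\pi(e^{x})\xi\rangle=\mathrm{Re}\langle\xi,e^{i\pi(H)}\xi\rangle=\int\cos\lambda\,d\mu_{\xi}(\lambda)$ and $\cos$ is decreasing on $[0,\pi]$ while $\mu_{\xi}$ lives where $|\lambda|>\sqrt{\|x\|^{2}-\delta}$, this integral is $\le\cos\sqrt{\|x\|^{2}-\delta}$, hence $L(\gamma)\ge\sqrt{\|x\|^{2}-\delta}$. Letting $\delta\to0$ gives $L(\gamma)\ge\|x\|=L(\epsilon)$, which is the asserted minimality.

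For the second statement, given $u,v$ with $\|u-v\|<2$ I set $w=u^{-1}v\in U_{\a}$, so that $\|w-1\|=\|v-u\|<2$; normality of $w$ forces $\sigma(w)\subset\{e^{i\theta}:|\theta|\le\theta_{0}\}$ for some $\theta_{0}<\pi$, so the principal logarithm is continuous on $\sigma(w)$. Then $x:=\log w$ is skew-adjoint (the branch satisfies $\overline{\log\zeta}=-\log\zeta$ on $\sigma(w)$), $\|x\|=\theta_{0}<\pi$, and $e^{x}=w$, so $\epsilon(t)=ue^{tx}$ joins $u$ to $v$ and is of minimal length by the first part.

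The only non-routine step is the replacement of the norming eigenvector by the approximating family $\xi_{\delta}$ and the accompanying limit $\delta\to0$; the sphere geometry and the logarithm construction are verbatim as in the Fredholm case. I should note there is also a softer route bypassing $S_{\h}$ entirely: by the $p$-norm version already proved in \cite{ea} one has $L_{p}(\epsilon)=d_{p}(u,v)=\|x\|_{p}$ whenever $\|x\|<\pi$; after normalizing $\tau(1)=1$ one has $\|a\|_{p}\le\|a\|$ and $\|a\|_{p}\to\|a\|$ as $p\to\infty$ for every $a\in\a$ (the $L^{p}$-to-$L^{\infty}$ passage for the probability measure $\tau$ induces on $\sigma(|a|)$), so any competitor $\gamma$ satisfies $L_{\infty}(\gamma)\ge L_{p}(\gamma)\ge\|x\|_{p}\to\|x\|=L_{\infty}(\epsilon)$. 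I would present the self-contained sphere argument and mention this shortcut in passing.
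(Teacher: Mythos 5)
Your proposal is correct, but it takes a genuinely different route to the key step. The paper does not approximate: it chooses a state $\varphi$ norming the positive element $-x^2$ (the proof writes $\varphi(x)=-\|x\|^2$, which should read $\varphi(x^2)=-\|x\|^2$) and passes to the GNS representation $(\pi_\varphi,H_\varphi,\xi_\varphi)$. Since $\pi_\varphi(x^2)+\|x\|^2\geq 0$ has vanishing expectation at $\xi_\varphi$, the cyclic vector is an \emph{exact} norming eigenvector, $\pi_\varphi(x^2)\xi_\varphi=-\|x\|^2\xi_\varphi$, so $e(t)=\pi_\varphi(ue^{tx})\xi_\varphi$ satisfies $\ddot e=-\|x\|^2e$ and the argument of Lemma~\ref{corta} carries over verbatim (exact minimal sphere geodesic, $L(e)=L(\epsilon)$, and the length non-increasing projection $\rho$). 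You instead fix a faithful representation, take approximate norming vectors $\xi_\delta$ from spectral projections of $-\pi(x)^2$, bound any competitor from below by the intrinsic sphere distance $\arccos\bigl(\mathrm{Re}\langle\xi_\delta,\pi(e^x)\xi_\delta\rangle\bigr)\geq\sqrt{\|x\|^2-\delta}$, and let $\delta\to 0$. This is sound: the arccosine lower bound is precisely the minimality of great-circle arcs of length at most $\pi$ that Lemma~\ref{corta} invokes, and the spectral-measure estimate is correct since the measure is supported where $|\lambda|>\sqrt{\|x\|^2-\delta}$ with $\|x\|<\pi$. What the paper's GNS trick buys is the elimination of all approximation bookkeeping (the cited argument applies word for word); what your version buys is that it works with any faithful representation and never needs an exact eigenvector. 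Your alternative route through the $p$-norms is also valid and genuinely different from the paper's proof: $\|a\|_p\leq\|a\|$ and $\|a\|_p\to\|a\|$ follow from faithfulness of $\tau$ on the C$^*$-algebra generated by $|a|$ and $1$, and the lower bound $L_\infty(\gamma)\geq L_p(\gamma)\geq\|x\|_p$ uses the minimality of $ue^{tx}$ for $d_p$ from \cite{ea}; this is shorter but not self-contained. Your treatment of the second assertion (principal logarithm of $u^*v$ when $\|u-v\|<2$) coincides with the paper's, which merely asserts it.
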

\begin{proof}
Let $\varphi$ be a state of $\a$ such that $\varphi(x)=-\|x\|^2$. Let $H_{\varphi}$ be the Hilbert space associated to the Gelfand-Neimark-Segal representation of $\a$ induced by $\varphi$. Consider the unit sphere of the Hilbert space $H_{\varphi}$, and represent elements $u\in U_\a$ as elements in the sphere by means of a cyclic and separating vector $\xi_{\varphi}\in H_{\varphi}$,
$$
\pi_{\varphi}(u)\xi_{\varphi}.
$$
The argument carries on as in Lemma \ref{corta}, because $e^{tx}$ is mapped to $e(t)=e^{t\pi_{\varphi}(x)}\xi_{\varphi}$, which is a minimal geodesic in the sphere since
$$
\ddot{e}(t)=e^{t\pi_{\varphi}(x)}\pi_{\varphi}(x^2) \xi_{\varphi}=-e^{t\pi_{\varphi}(x)}\|x\|^2 \xi_{\varphi}=-\|x\|^2 e(t).
$$
See \cite{pr} for a similar argument. If $\|u-v\|<2$ then there exists a skew-adjoint element $x\in \a$ with $\|x\|<\pi$ such that $v=ue^x$, and one can consider the curve $\epsilon(t)=ue^{tx}$.
\end{proof}

Let us state now the analogous of Theorem \ref{convexidad_p}. 
\begin{teo}
Let $p$ be a positive even integer, $p\ge 4$. Let $u\in U_\a$ and  let $\beta:[0,1]\to U_\a$ be a non constant geodesic contained in the geodesic (uniform) ball of radius $r_p=\frac12 g^{-1}(\frac{1}{p-1})$, namely $\beta\subset B_\infty(u,r_p)$. Assume further that $u$ does not belong to any prolongation of $\beta$. Then  $f_p(s)=d_p(u,\beta(s))^p$ verifies 
$$
(p-1)f'_p(s)^2\le pf_p(s) f_p''(s) \frac{1}{g(2\|w_s\|)}\le p(p-1)f_p(s)f''_p(s),
$$
for any $s\in [0,1]$, and equality holds in the last term if and only if $f_p''(s)=0$.
\end{teo}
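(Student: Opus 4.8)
The plan is to repeat the proof of Theorem~\ref{convexidad_p} almost verbatim, with the finite faithful trace $\tau$ in place of $Tr$, the algebra $\a$ in place of $\bpe$, and the hypothesis on the $d_p$-ball replaced by the hypothesis on the $d_\infty$-ball. First I would reduce to $u=1$, since translations are isometric for every metric involved, and write $\beta(s)=e^{v}e^{sz}$ with $v,z\in\a$ skew-adjoint. Then I would set up the logarithm. Since $g$ decreases strictly from $1$ to $0$ on $[0,\pi]$ one has $r_p=\frac12 g^{-1}(\frac1{p-1})<\pi/2$, so the hypothesis $\beta\subset B_\infty(1,r_p)$ forces $d_\infty(1,\beta(s))<r_p<\pi/2<2$; hence $-1\notin\sigma(\beta(s))$, the principal logarithm $w_s=\log\beta(s)$ is a real-analytic curve of skew-adjoint elements of $\a$ with $\sigma(w_s)\subset i(-\pi,\pi)$ and thus $\|w_s\|<\pi$, and by the Lemma preceding this theorem the one-parameter subgroup $\gamma_s(t)=e^{tw_s}$ realizes the uniform distance, so that $\|w_s\|=d_\infty(1,\beta(s))<r_p<\pi/2$. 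By \cite[Theorem 5.4]{ea} the same curve is $d_p$-minimal, so $d_p(1,\beta(s))=\|w_s\|_p$ and $f_p(s)=(-1)^{p/2}\tau(w_s^p)$; note $f_p(s)>0$ since $w_s\ne 0$ by the prolongation hypothesis and $\tau$ is faithful.

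The analytic heart of the argument is then a transcription of Theorem~\ref{convexidad_p}. Using $d\exp_x(y)=\int_0^1 e^{(1-t)x}ye^{tx}\,dt$ and the resulting identity $z=\int_0^1 e^{-tw_s}\dot w_s e^{tw_s}\,dt$ coming from $e^{w_s}=e^ve^{sz}$, I would compute $f_p'(s)=\frac1{p-1}H_{w_s}(\dot w_s,w_s)$ and $f_p''(s)=H_{w_s}(\dot w_s,z)=\int_0^1 H_{w_s}(\delta_s(0),\delta_s(t))\,dt$, where $H_{w_s}$, $Q_{w_s}$ are the Hessian form and quadratic form of Remark~\ref{jesiano} (available here since they depend only on the trace and on $\|xy\|_p\le\|x\|\,\|y\|_p$) and $\delta_s(t)=e^{-tw_s}\dot w_s e^{tw_s}$. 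As $H_{w_s}$ is positive semidefinite with null space $K_s$, I would pass to the quotient pre-Hilbert space $\a_{ah}/K_s$, on which $\delta_s$ runs on the sphere of radius $R_s=Q_{w_s}(\dot w_s)^{1/2}$, so $H_{w_s}(\delta_s(0),\delta_s(t))=R_s^2\cos\alpha_s(t)$ with $\alpha_s(t)$ the subtended angle. The $Ad$-invariance of $Q_{w_s}$ and property~1 of Remark~\ref{jesiano} bound $R_s\alpha_s(t)\le t\,Q_{w_s}^{1/2}([w_s,\dot w_s])\le 2t\|w_s\|\,R_s$. This is the single point where the argument diverges from the Schatten case: there one enlarges the bound to $2t\|w_s\|_p R_s$ so as to match the $d_p$-ball hypothesis, whereas here $\|w_s\|=d_\infty(1,\beta(s))<r_p$ is already exactly the quantity one controls, so the operator norm is what appears in the conclusion. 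Since $\|w_s\|<\pi/2$ this gives $\cos\alpha_s(t)\ge\cos(2t\|w_s\|)$, hence $f_p''(s)\ge R_s^2\,g(2\|w_s\|)\ge 0$.

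To conclude I would combine this with Cauchy--Schwarz for $H_{w_s}$, $(p-1)^2 f_p'(s)^2=H_{w_s}(w_s,\dot w_s)^2\le Q_{w_s}(w_s)\,Q_{w_s}(\dot w_s)=p(p-1)f_p(s)R_s^2$, to obtain $(p-1)f_p'(s)^2\le p\,f_p(s)\,f_p''(s)/g(2\|w_s\|)$; then $\|w_s\|<r_p=\frac12 g^{-1}(\frac1{p-1})$ and the monotonicity of $g$ yield $g(2\|w_s\|)>\frac1{p-1}$, i.e. $1/g(2\|w_s\|)<p-1$, which gives the right-hand inequality, and since $f_p(s)>0$ equality there forces $f_p''(s)=0$. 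The degenerate case $R_s=Q_{w_s}(\dot w_s)=0$ is dealt with exactly as in Theorem~\ref{convexidad_p}: two further uses of Cauchy--Schwarz for $H_{w_s}$ give $f_p''(s)=H_{w_s}(\dot w_s,z)=0$ and $f_p'(s)=\frac1{p-1}H_{w_s}(\dot w_s,w_s)=0$, so all three terms vanish.

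I do not anticipate a genuine obstacle. Everything that is needed has been arranged to survive the passage to a finite $C^*$-algebra: Remark~\ref{jesiano} was stated in exactly that generality, the differentiation formulas use only holomorphic functional calculus in $\a$ and the trace property, and the minimality of one-parameter subgroups for the uniform and the $p$-metrics is supplied respectively by the Lemma above and by \cite[Theorem 5.4]{ea}. The only thing that requires care is converting the hypothesis $\beta\subset B_\infty(u,r_p)$ into the bound $\|w_s\|<r_p<\pi/2$ and keeping the operator norm (rather than the $p$-norm) in the estimates; once that is done, the rest is the Schatten proof.
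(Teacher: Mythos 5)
Your proposal is correct and follows essentially the same route as the paper, whose proof of this theorem consists precisely of the instruction to rerun the argument of Theorem~\ref{convexidad_p} keeping the operator norm (rather than the $p$-norm) in the estimates, which is exactly the one point of divergence you isolate. Your write-up merely fills in the details (principal logarithm from $d_\infty(1,\beta(s))<r_p<\pi/2$, minimality of $e^{tw_s}$ for both the uniform and the $p$-metrics, and the bound $R_s\alpha_s(t)\le 2t\|w_s\|R_s$ giving $f_p''(s)\ge R_s^2\,g(2\|w_s\|)$), all consistent with what the paper intends.
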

\begin{proof}
One only needs to review the proof of Theorem \ref{convexidad_p}, and note that the result follows using the inequalities with the operator norm $\| \ \|$. Notice that by Remark 3.7 in \cite{upe}, the condition $\|w_s\|<\pi/2$ guarantees that $f_p$ is convex.
\end{proof}

Analogously the other results follow:

\begin{coro}
Let $u$, $\beta$ be as in the above theorem. Then $g_p(s)=d_p(u,\beta(s))=f_p(s)^{\frac1p}$ is strictly convex.
\end{coro}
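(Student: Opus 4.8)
The plan is to transcribe, with only cosmetic changes, the proof of the corresponding corollary for $\upe$ (the one deducing strict convexity of $g_p$ from Theorem \ref{convexidad_p}), now invoking the $C^*$-algebra version of that theorem stated just above, together with Lemma \ref{fseg} and Remark \ref{jesiano} (the latter being originally formulated precisely in this context). First I would normalize $u=1$, since left translations in $U_\a$ are $d_p$-isometries, and write $\beta(s)=e^v e^{sz}$ with $v,z\in\a$ skew-adjoint. From $\|\beta(s)-1\|\le d_\infty(1,\beta(s))<r_p<\pi/2$ one sees that $w_s=\log\beta(s)$ is a well-defined real-analytic curve of skew-adjoint elements with $\|w_s\|=d_\infty(1,\beta(s))<r_p<\pi/2$; by the Lemma above the curve $e^{tw_s}$ is $d_p$-minimal, so $f_p(s)=d_p(1,\beta(s))^p=\|w_s\|_p^p=(-1)^{p/2}\tau(w_s^p)$ is real-analytic, and $f_p(s)>0$ on $[0,1]$ because $u=1$ lies on no prolongation of $\beta$, hence not on $\beta$.

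Next, the $C^*$-version of Theorem \ref{convexidad_p} gives $(p-1)f_p'(s)^2\le p(p-1)f_p(s)f_p''(s)$ (so $f_p$ is convex), and differentiating $g_p=f_p^{1/p}$ twice yields, exactly as in the $\upe$ case,
$$
g_p''(s)=\frac{1}{p^2}f_p(s)^{\frac1p-2}\left[pf_p(s)f_p''(s)-(p-1)f_p'(s)^2\right]\ge 0,
$$
with $g_p''(s)=0$ exactly when $f_p''(s)=0$. Assuming for the moment that $f_p$ is non-constant, Lemma \ref{fseg} applied with $C=p\sup_{[0,1]}f_p$ shows $f_p$ is strictly convex on $(0,1)$ and that $f_p''$ vanishes at most at one point $\alpha\in(0,1)$; hence $g_p''\ge 0$ vanishes at most at $\alpha$, and a convex $C^2$ function whose second derivative has at most one zero is strictly convex, by the chord argument at the end of the proof of Lemma \ref{fseg}.

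So the hard part is to rule out that $f_p$ is constant. If $f_p\equiv c$, then $f_p''\equiv 0$, but from the proof of the theorem above one has $f_p''(s)\ge R_s^2\, g(2\|w_s\|)$ with $g(2\|w_s\|)>0$, whence $R_s^2=Q_{w_s}(\dot w_s)\equiv 0$. By part $2.$ of Remark \ref{jesiano} (whose summands are squares of genuine $\|\cdot\|_2$-norms, which are faithful since $\tau$ is) this forces $\dot w_s\, w_s^{p/2-1}=0$, and a functional-calculus argument upgrades it to $\dot w_s w_s=w_s\dot w_s=0$; the one subtlety is that this step must be carried out in the von Neumann algebra generated by $\a$ in the GNS representation of $\tau$, since the support projection of $w_s$ need not belong to $\a$. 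Feeding this into $z=\int_0^1 e^{-tw_s}\dot w_s e^{tw_s}\,dt$ gives $z=\dot w_s$ for all $s$, hence $w_s=v+sz$ with $vz=zv=0$; then $-w_s^2=(-v^2)+s^2(-z^2)$ with $-v^2,-z^2\ge 0$ of orthogonal supports, so $f_p(s)=\tau((-v^2)^{p/2})+s^p\,\tau((-z^2)^{p/2})=\|v\|_p^p+s^p\|z\|_p^p$; constancy of $f_p$ together with faithfulness of $\tau$ now force $z=0$, i.e. $\beta$ is constant, against the hypothesis. Apart from this functional-calculus point, the argument is a word-for-word copy of the $\upe$ case.
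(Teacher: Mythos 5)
Your proposal is correct and it follows the route the paper intends: the paper proves this corollary only with the word ``analogously'', i.e.\ by repeating the two Section~2 corollaries with the operator norm in place of the $p$-norm, and that is exactly what you do (the C$^*$-version of Theorem~\ref{convexidad_p}, Lemma~\ref{fseg}, the formula for $g_p''$, and the reduction of the constant case to $R_s\equiv 0$ via Remark~\ref{jesiano}). The one genuine deviation is the endgame of the constant case: the paper excludes $f_p\equiv\mathrm{const}$ by noting that $w_s=v+sz$ cannot have constant $p$-norm unless $v$ is a multiple of $z$ (strict convexity of the $p$-norm), which would put $u$ on a prolongation of $\beta$; you instead compute, from $vz=zv=0$, that $f_p(s)=\|v\|_p^p+s^p\|z\|_p^p$ and use faithfulness of $\tau$ to force $z=0$, contradicting that $\beta$ is non constant. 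Your variant is slightly preferable in the abstract setting, since it avoids having to justify strict convexity of the trace $p$-norm on $\a$, and your remark that the step $\dot w_s w_s^{\frac p2-1}=0\Rightarrow \dot w_s w_s=0$ should be carried out in a faithful representation (where the support projection of $w_s$ is available) is precisely the point at which the operator-theoretic argument of Section~2 needs adaptation. One caveat you inherit verbatim from the paper rather than introduce yourself: the positivity of $pf_pf_p''-(p-1)f_p'(s)^2$ is read off directly from the theorem, whose displayed chain only yields $(p-1)f_p'^2\le pf_pf_p''\,g(2\|w_s\|)^{-1}\le p(p-1)f_pf_p''$, hence $f_p'^2\le pf_pf_p''$; closing the remaining factor would require a sharper estimate than the one stated, but since your step coincides with the paper's, this is not a defect of your transcription.
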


\begin{coro}
Let $u\in U_\a$, $\beta:[0,1]\to U_\a$ a geodesic such that $d_\infty(u,\beta)<\frac{\pi}{2}$. Then $g(s)=d_\infty(u,\beta(s))$, $s\in[0,1]$ is a convex function.
\end{coro}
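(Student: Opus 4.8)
The plan is to obtain this convexity as the limit, as $p\to\infty$, of the $p$-Schatten convexity established in the preceding theorem (the $C^*$-analogue of Theorem~\ref{convexidad_p}) and its corollary. This replaces the finite-rank truncation used for the Fredholm group, and is in fact cleaner: since $\tau$ is finite, the $p$-norm is finite on all of $\a$, so $d_p$ is defined on the whole of $U_\a$ and, for every $a$, one has $\|a\|_p\to\|a\|$ directly. As usual we may assume $u=1$, since the left action of $U_\a$ on itself is $d_\infty$-isometric, and write $\beta(s)=e^v e^{sz}$ with $v,z$ skew-adjoint.

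Two degenerate situations are handled by hand. If $\beta$ is constant, then $g$ is constant, hence convex. If $1$ lies on a prolongation of $\beta$, then $e^v=e^{-s_0 z}$ for some $s_0\in\mathbb{R}$, so $\beta(s)=e^{(s-s_0)z}$ and $g(s)=d_\infty(1,e^{(s-s_0)z})=\sup\{\, {\rm dist}((s-s_0)t,2\pi\mathbb{Z}) : it\in\sigma(z)\,\}$; each function under the supremum is piecewise linear in $s$, and, being $<\pi/2$ on $[0,1]$, its restriction to $[0,1]$ lies inside a single linear, $\vee$-shaped piece, hence is convex there, so $g$ is a supremum of convex functions on $[0,1]$. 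From now on we assume $\beta$ is non-constant and $1$ lies on no prolongation of $\beta$.

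Set $\rho=\max_{s\in[0,1]} d_\infty(1,\beta(s))$, a maximum which is attained and, by hypothesis, satisfies $\rho<\pi/2$. Since $g={\rm sinc}$ is a decreasing bijection of $[0,\pi]$ onto $[0,1]$ with $g^{-1}(0)=\pi$, the radii $r_p=\frac{1}{2}g^{-1}(\frac{1}{p-1})$ increase to $\pi/2$ as $p\to\infty$; hence there is an even integer $p_0\ge 4$ such that $r_p>\rho$, that is $\beta\subset B_\infty(1,r_p)$, for every even $p\ge p_0$. For each such $p$ the preceding theorem and its corollary apply to the pair $(1,\beta)$, so $g_p(s)=d_p(1,\beta(s))$ is (strictly) convex on $[0,1]$. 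Writing $w_s=\log\beta(s)$, which is well defined, smooth and skew-adjoint with $\|w_s\|=d_\infty(1,\beta(s))<\pi/2$, minimality of the short one-parameter curve $t\mapsto e^{tw_s}$ in the $p$-metric (recalled at the beginning of this section) gives $d_p(1,\beta(s))=\|w_s\|_p$.

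It remains to let $p\to\infty$. For each fixed $s$ we have $\|w_s\|_p=\tau(|w_s|^p)^{1/p}\to\|w_s\|=g(s)$: the bound $\|w_s\|_p\le\|w_s\|\,\tau(1)^{1/p}$ gives $\limsup_p\|w_s\|_p\le\|w_s\|$, while faithfulness of $\tau$ gives the matching $\liminf$, since for each $\varepsilon>0$ a continuous positive function supported near $\|w_s\|\in\sigma(|w_s|)$ produces, through the functional calculus of $|w_s|$, a nonzero positive element of strictly positive trace. Therefore, on $[0,1]$, the function $g$ is a pointwise limit of the convex functions $g_p$, and so it is convex. The point one must get right is the organization of this limit — in particular observing that $r_p\to\pi/2$, so that a uniform ball of radius $r_p$ eventually contains $\beta$ and the hypothesis $d_\infty(u,\beta)<\pi/2$ (rather than the $\pi/4$ one sees for fixed $p$) is what is needed — together with the convergence $\|\cdot\|_p\to\|\cdot\|$, which is where faithfulness of $\tau$ enters; the degenerate cases are elementary but should not be skipped.
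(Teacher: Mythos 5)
Your proof is correct and is essentially the argument the paper intends when it states that this corollary follows analogously from the preceding theorem: apply the $d_p$-convexity of $g_p$ for every sufficiently large even $p$ (legitimate because $r_p=\frac12 g^{-1}(\frac{1}{p-1})\to\pi/2$, so the uniform-ball hypothesis is eventually met) and pass to the pointwise limit $g_p(s)=\|w_s\|_p\to\|w_s\|=g(s)$, where finiteness and faithfulness of the trace replace the finite-rank truncation used for the Fredholm group. Your explicit handling of the degenerate cases (constant $\beta$, or $u$ on a prolongation of $\beta$) and of the convergence $\|\cdot\|_p\to\|\cdot\|$ supplies details the paper leaves implicit.
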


\begin{coro}
The radius of convexity of the geodesic balls in $U_\a$ is $\frac{\pi}{4}$.
\end{coro}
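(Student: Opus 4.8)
The plan is to read this off from the previous corollary --- convexity of $s\mapsto d_\infty(u,\beta(s))$ whenever the geodesic $\beta$ stays within distance $\pi/2$ of $u$ --- together with the Lemma on short one parameter groups in $U_\a$, the bridge being a two sided triangle inequality. Concretely, I would show that for every $u\in U_\a$ the open geodesic ball $B_\infty(u,\pi/4)$ is geodesically convex; since the threshold $\pi/2$ of the previous corollary is exactly twice $\pi/4$, this is the largest radius for which the argument runs, which is what fixes the value in the statement.

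Fix $u\in U_\a$ and take $u_0,u_1\in B_\infty(u,\pi/4)$. Since the operator norm of a displacement never exceeds the rectifiable length of a joining curve, $\|u_0-u_1\|\le d_\infty(u_0,u_1)\le d_\infty(u_0,u)+d_\infty(u,u_1)<\pi/2<2$. By the Lemma there is a skew-adjoint $x\in\a$ with $\|x\|<\pi$ and $u_1=u_0e^{x}$, and $\beta(s)=u_0e^{sx}$ is a minimal curve, so $\|x\|=L(\beta)=d_\infty(u_0,u_1)<\pi/2$. Applying the Lemma to the exponents $sx$ and $(1-s)x$ (which still have norm $<\pi$) shows that every subarc of $\beta$ is minimal, hence $d_\infty(u_0,\beta(s))=s\|x\|$ and $d_\infty(u_1,\beta(s))=(1-s)\|x\|$ for $s\in[0,1]$.

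Now run the triangle inequality from each endpoint: for every $s\in[0,1]$,
$$
d_\infty(u,\beta(s))\le d_\infty(u,u_0)+s\|x\|<\tfrac{\pi}{4}+s\,\tfrac{\pi}{2}, \qquad d_\infty(u,\beta(s))\le d_\infty(u,u_1)+(1-s)\|x\|<\tfrac{\pi}{4}+(1-s)\tfrac{\pi}{2},
$$
whence $d_\infty(u,\beta(s))<\tfrac{\pi}{4}+\min\{s,1-s\}\,\tfrac{\pi}{2}\le\tfrac{\pi}{2}$; that is, $d_\infty(u,\beta)<\pi/2$. The previous corollary then gives that $g(s)=d_\infty(u,\beta(s))$ is convex on $[0,1]$, so $g(s)\le\max\{g(0),g(1)\}=\max\{d_\infty(u,u_0),d_\infty(u,u_1)\}<\pi/4$ for all $s$. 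Hence $\beta\subset B_\infty(u,\pi/4)$, and since $u,u_0,u_1$ were arbitrary, every geodesic ball of radius $\pi/4$ in $U_\a$ is convex, i.e. the radius of convexity is at least $\pi/4$.

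Given the previous corollary the argument above is entirely routine; the only place needing a moment's attention is the two sided triangle estimate, which is exactly what forces the radius $\pi/4=\tfrac12\cdot\tfrac{\pi}{2}$. The more delicate half, which I would isolate, is that $\pi/4$ cannot be enlarged: this amounts to exhibiting, for each $r>\pi/4$, a centre and a geodesic between two points of a ball of radius $r$ that leaves that ball --- equivalently, to checking that the $\pi/2$ hypothesis of the previous corollary is sharp --- and that is the step I expect to require real work beyond the soft arguments used for the $\pi/4$-balls.
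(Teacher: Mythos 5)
Your argument is correct and is essentially the intended one: the paper states this corollary without any proof, as an immediate consequence of the preceding convexity corollary, and your triangle-inequality reduction (points of the geodesic joining two elements of $B_\infty(u,\pi/4)$ stay within $\pi/2$ of $u$, so $d_\infty(u,\beta(\cdot))$ is convex and bounded by its values at the endpoints) is exactly that deduction. Your closing worry about sharpness --- showing $\pi/4$ cannot be enlarged --- is reasonable, but the paper does not address it either, so it is not a gap relative to the paper's own treatment.
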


\section{Smooth structure of unitary orbits of the Fredholm group}

In this section we shall investigate the geometric structure of the unitary orbit of a fixed operator. To simplify our exposition, we shall consider the case of a bounded self-adjoint operator $A\in \b(\h)$, and its orbit
$$
\o_A=\{uAu^*: u\in U_c(\h)\}.
$$
Note that $\o_A\subset A+\k(\h)_h$ (and clearly $\o_A\subset \k(\h)_h$ if $A$ is compact).
First we characterize the operators $A$ such that $\o_A$ is a submanifold of the affine Banach space $A+\k(\h)$.
The argument is analogous to the one developed in \cite{upe} for the orbit of the group $U_2(\h)$. We shall use the following lemma, which is an elementary consequence of the Inverse Function Theorem. Its proof can be found in the Appendix of \cite{raeburn}.
\begin{lem}\label{lemaraeburn}
Let $G$ be a Banach-Lie group acting smoothly on a Banach space $X$. For a fixed
$x\in X$, denote by $\pi_{x}:G\to X$ the smooth map $\pi_{x}(g)=g\cdot
x$. Suppose that
\begin{enumerate}
\item
$\pi_{x}$ is an open mapping, when regarded as a map from $G$ onto the orbit
$\{g\cdot x: g\in G\}$ of $x$ (with the relative topology of $X$).
\item
The differential $d(\pi_{x})_1:(TG)_1\to X$ splits: its kernel and range are
closed complemented subspaces.
\end{enumerate}
Then the orbit $\{g\cdot x: g\in G\}$ is a smooth submanifold of  $X$, and the
map
$\pi_{x}:G\to \{g\cdot x: g\in G\}$ is a smooth submersion.
\end{lem}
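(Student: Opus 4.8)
The plan is to deduce the lemma from the Inverse Function Theorem after one structural observation about the action. Write $T_1G$ for the Lie algebra of $G$, let $G_x=\{g\in G: g\cdot x=x\}$ be the stabilizer of $x$ (a closed subgroup, since $\pi_x$ is continuous), and put $K=\ker d(\pi_x)_1$ and $R={\rm ran}\, d(\pi_x)_1$, which by hypothesis 2 are closed and complemented; fix decompositions $T_1G=K\oplus S$ and $X=R\oplus W$. The first point is that $K$ is exactly the ``Lie algebra'' of $G_x$, i.e. $\exp(t\xi)\cdot x=x$ for all $t$ whenever $\xi\in K$. Indeed, for such $\xi$ the curve $c(t)=\exp(t\xi)\cdot x$ satisfies $c(t+s)=\exp(s\xi)\cdot c(t)=\exp(t\xi)\cdot(\exp(s\xi)\cdot x)$ (because $\exp(s\xi)$ and $\exp(t\xi)$ commute), so differentiating at $s=0$ and using that the action of the fixed element $\exp(t\xi)$ is a diffeomorphism of $X$,
$$\dot c(t)=d\big(\exp(t\xi)\cdot\big)_{x}\big(d(\pi_x)_1(\xi)\big)=0 ,$$
whence $c\equiv x$. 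In particular $\exp(b)\in G_x$ for $b$ in a neighbourhood of $0$ in $K$.

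Next I would build a ``slice'' near $x$. The Open Mapping Theorem makes $\lambda:=d(\pi_x)_1|_S:S\to R$ a topological isomorphism. The map $\alpha(a,b)=\exp(a)\exp(b)$ is a local diffeomorphism from a neighbourhood of $(0,0)$ in $S\oplus K$ onto a neighbourhood $N$ of $1$ in $G$ (its differential at $0$ is the identity of $T_1G=S\oplus K$), and by the first paragraph $\pi_x(\alpha(a,b))=\exp(a)\cdot(\exp(b)\cdot x)=\exp(a)\cdot x=:h(a)$ is independent of $b$. Thus over $N$ the orbit map factors as the submersion $\alpha(a,b)\mapsto a$ followed by $h$, whose differential at $0$ is $\lambda$, an injective morphism with closed complemented range $R$; hence $h$ is an immersion near $0$. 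Applying the Inverse Function Theorem to $\Theta(a,w):=h(a)+w$, whose differential at $(0,0)$ is the isomorphism $\lambda\oplus{\rm id}_W:S\oplus W\to R\oplus W=X$, produces (after shrinking) a chart of $X$ around $x$ in which $M:=\{h(a): a\ \mbox{small}\}$ appears as the flat slice $\{w=0\}$. So $M$ is an embedded submanifold of $X$ modelled on $R$, and $M\subseteq\mathcal O:=\pi_x(G)$.

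It then remains to check that, near $x$, the orbit is exactly $M$ — this is where hypothesis 1 is indispensable. Since $\pi_x:G\to\mathcal O$ is open and $\pi_x(N)=M$, the set $M$ is relatively open in $\mathcal O$; intersecting with the chart domain above shows that $\mathcal O$ coincides with the slice $M$ on a neighbourhood of $x$ in $X$, so $\mathcal O$ is an embedded submanifold of $X$ near $x$. For an arbitrary point $y=g\cdot x$ of $\mathcal O$, the diffeomorphism $z\mapsto g\cdot z$ of $X$ carries $x$ to $y$ and $\mathcal O$ onto $\mathcal O$, hence carries the submanifold chart at $x$ onto one at $y$; therefore $\mathcal O$ is an embedded submanifold of $X$, modelled on $R$. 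Finally $\pi_x$ is smooth as a map into $\mathcal O$ (because $\mathcal O$ is embedded), and the factorization $\pi_x=h\circ(\mbox{submersion})$ near $1$, with $h$ a diffeomorphism onto the relatively open subset $M$ of $\mathcal O$, shows $\pi_x$ is a submersion near $1$; translating by $G$ gives the same at every point, so $\pi_x:G\to\mathcal O$ is a smooth submersion.

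I expect two places to need the most care. The routine-but-tedious one is the bookkeeping of the successive shrinkings of neighbourhoods (in $G$ via $\alpha$, in $X$ via $\Theta$) needed to make the local statements precise, together with the standard verification that $\alpha$ and $\Theta$ are local diffeomorphisms. The conceptually essential one is the use of hypothesis 1 in the third paragraph: without openness the orbit map still factors through the immersion $h$, but $\mathcal O$ could be only weakly immersed rather than embedded (as with an irrational line in a torus), so the ``reverse inclusion'' $\mathcal O\cap U\subseteq M$ genuinely requires it; the identification $K={\rm Lie}(G_x)$ of the first paragraph is what makes the factorization through an immersion possible in the first place, and hypothesis 2 is what lets all the splittings and inverse-function arguments go through.
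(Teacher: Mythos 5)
The paper does not actually prove this lemma: it is quoted as an elementary consequence of the Inverse Function Theorem, with the proof deferred to the Appendix of Raeburn's paper \cite{raeburn}, so there is no internal proof to compare against. Your argument is correct and is essentially the standard slice argument used in that reference: you identify $K=\ker d(\pi_x)_1$ with directions whose one-parameter groups fix $x$, build the local slice $h(a)=\exp(a)\cdot x$ on a closed complement $S$, apply the Inverse Function Theorem first to $\alpha(a,b)=\exp(a)\exp(b)$ and then to $\Theta(a,w)=h(a)+w$ to obtain a chart of $X$ in which the slice is flat, and you invoke hypothesis 1 exactly where it is indispensable, namely to see that $\pi_x(N)=M$ is relatively open in the orbit, so that the orbit coincides with the slice near $x$; translation by the group then globalizes both the submanifold charts and the submersion property, the latter via the factorization of $\pi_x$ through the projection $\alpha(a,b)\mapsto a$ and the diffeomorphism $h$ onto the open piece $M$ of the orbit. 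The only cosmetic remark is that the parenthetical claim that $h$ is an immersion \emph{near} $0$ is neither needed nor quite immediate as stated; only injectivity and complemented closed range of $dh_0=\lambda$ enter, through the map $\Theta$, and that is all your argument actually uses. With the neighbourhood bookkeeping you already flag as routine, the proof is complete.
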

In our case, we must consider the map
$$
\pi_A:U_c(\h)\to \o_A \subset A+\k(\h), \ \ \pi_A(u)=uAu^*,
$$
whose differential at  $1$ is the inner derivation
$$
\delta_A:\k(\h)_{ah}\to \k(\h)_h , \ \ \delta_A(x)=xA-Ax.
$$
There are many results on the spectral properties of elementary operators, in particular of inner derivations. One can find an extensive survey on this subject in the book \cite{afhv} by C. Apostol, L. Fialkow, D.A. Herrero and D.V. Voiculescu. Specifically related to our context, where the inner derivation is restricted to the compact operators is the paper \cite{fialkow} by L. Fialkow. We transcribe a result from this paper: denote by $\tau_{AB}$ the operator $\tau_{AB}(x)=Ax-xB$. Let ${\cal J}$ be any Schatten ideal.
\begin{teo}\label{teofialkow} {\rm (Fialkow \cite{fialkow})}
The following are equivalent:
\begin{enumerate}
\item
$\tau_{AB}:\b(\h)\to \b(\h)$ is bounded below.
\item
$\tau_{AB}:{\cal J}\to {\cal J}$ is bounded below for some ${\cal J}$.
\item
$\tau_{AB}:{\cal J}\to {\cal J}$ is bounded below for any ${\cal J}$.
\item
$\sigma_l(A)\cap \sigma_r(B)=\emptyset$.
\end{enumerate}
\end{teo}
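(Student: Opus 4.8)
The plan is to reduce the whole statement to the single assertion ``$0\notin\sigma_{ap}(\tau_{AB})$'' on the space in question: recall that an operator on a Hilbert space is bounded below exactly when it is injective with closed range, i.e. when $0$ avoids its approximate point spectrum $\sigma_{ap}$. Since $(3)\Rightarrow(2)$ is immediate, it is enough to prove that $(4)$ fails if and only if each of $(1),(2),(3)$ fails, and that $(4)$ implies all three.

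First I would dispose of the elementary half. If $\lambda\in\sigma_l(A)\cap\sigma_r(B)$, choose unit vectors $\xi_n\in\h$ with $\|(A-\lambda)\xi_n\|\to0$ (possible because $A-\lambda$ is not bounded below) and unit vectors $\eta_n\in\h$ with $\|(B-\lambda)^*\eta_n\|\to0$ (possible because $B-\lambda$ is not onto, so $(B-\lambda)^*=B^*-\bar\lambda$ is not bounded below). For the rank-one operators $x_n=\xi_n\otimes\eta_n$, i.e. $x_n\zeta=\langle\zeta,\eta_n\rangle\xi_n$, a direct computation gives
$$
\tau_{AB}(x_n)=Ax_n-x_nB=\big((A-\lambda)\xi_n\big)\otimes\eta_n-\xi_n\otimes\big((B-\lambda)^*\eta_n\big),
$$
whose $\jj$-norm is at most $\|(A-\lambda)\xi_n\|+\|(B-\lambda)^*\eta_n\|\to0$ in every Schatten norm (each summand has rank one, so all its Schatten norms equal its operator norm) as well as in the operator norm, while $\|x_n\|_{\jj}=1$ throughout. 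Hence $\tau_{AB}$ is bounded below neither on $\b(\h)$ nor on any Schatten ideal, so $(1),(2),(3)$ all fail.

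For the converse I would write $\tau_{AB}=L_A-R_B$, the difference of the commuting operators of left multiplication by $A$ and right multiplication by $B$, acting on $\b(\h)$ or on a fixed Schatten ideal $\jj$. The preliminary point is to pin down the approximate point spectra of the factors: if $\lambda\notin\sigma_l(A)$ then $A-\lambda$ has a bounded left inverse $G$, so $\|L_{A-\lambda}x\|_{\jj}=\|(A-\lambda)x\|_{\jj}\ge\|G\|^{-1}\|x\|_{\jj}$ on every one of these spaces with the \emph{same} constant, while $\lambda\in\sigma_l(A)$ forces $L_{A-\lambda}$ not bounded below by the rank-one construction above; thus $\sigma_{ap}(L_A)=\sigma_l(A)$ on $\b(\h)$ and on each $\jj$, and dually (via $x\mapsto x^*$) $\sigma_{ap}(R_B)=\sigma_r(B)$. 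Then I would invoke the spectral mapping theorem for the joint approximate point spectrum of a commuting pair: if $S,T$ commute and $0\in\sigma_{ap}(S-T)$, there is a single $\lambda$ lying in $\sigma_{ap}(S)\cap\sigma_{ap}(T)$. Applied to $S=L_A$, $T=R_B$ this yields $0\in\sigma_{ap}(\tau_{AB})\Rightarrow\sigma_l(A)\cap\sigma_r(B)\ne\emptyset$; taking contrapositives and running the argument separately on $\b(\h)$ and on each Schatten ideal closes the cycle $(4)\Rightarrow(3)\Rightarrow(2)\Rightarrow(4)$ together with $(4)\Leftrightarrow(1)$.

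The step I expect to be the main obstacle is precisely this last invocation. Hypothesis $(4)$ is strictly weaker than $\sigma(A)\cap\sigma(B)=\emptyset$, so Rosenblum's contour-integral formula for the inverse of $\tau_{AB}$ is unavailable and one must be content with ``bounded below''; the substitute is the joint-approximate-spectrum spectral mapping theorem, which is the genuine analytic input one has to import (locally, via ultrapowers of $\h$ and the intertwining identities, or by citing the several-variable spectral mapping theorem). Everything else — passing through all three regimes of $(1),(2),(3)$ with ideal-independent lower bounds, and the identification of $\sigma_{ap}(L_A)$ and $\sigma_{ap}(R_B)$ — is bookkeeping with $L_AR_B=R_BL_A$ and with one-sided inverses.
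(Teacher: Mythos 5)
The paper itself does not prove this statement: Theorem \ref{teofialkow} is transcribed from Fialkow's article \cite{fialkow} and used as a black box, so there is no in-paper argument to compare yours with; what follows is an assessment of your proposal on its own terms.

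Your outline is essentially sound. The elementary half is complete: on a Hilbert space, left invertibility is equivalent to being bounded below and right invertibility to surjectivity, so from $\lambda\in\sigma_l(A)\cap\sigma_r(B)$ your rank-one operators $x_n=\xi_n\otimes\eta_n$ satisfy $\|x_n\|_{\jj}=\|x_n\|=1$ while $\|\tau_{AB}(x_n)\|_{\jj}\le\|(A-\lambda)\xi_n\|+\|(B-\lambda)^*\eta_n\|\to 0$ simultaneously in the operator norm and in every ideal norm, so that (1), (2) and (3) fail at once. In the converse direction, the identifications $\sigma_{ap}(L_A)=\sigma_l(A)$ and $\sigma_{ap}(R_B)=\sigma_r(B)$, with lower bounds independent of the ideal, are correct, and you have correctly isolated the crux: the projection property (equivalently, the polynomial spectral mapping theorem) for the joint approximate point spectrum of a commuting pair on a Banach space, which turns $0\in\sigma_{ap}(L_A-R_B)$ into a single $\lambda\in\sigma_{ap}(L_A)\cap\sigma_{ap}(R_B)$. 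That input is a genuine theorem (it appears, e.g., in M\"uller's treatment of spectral systems and in the work of Ch\=o and Takaguchi, and it is exactly what the ultraproduct argument you allude to proves: in an ultrapower the approximate kernel of $L_A-R_B$ becomes a genuine kernel $N$, invariant for both multiplications, on which they coincide, and one picks $\lambda$ in the boundary of the spectrum of the restriction). The one inaccuracy is your phrase ``ultrapowers of $\h$'': the ultrapower must be taken of the Banach space on which $L_A$ and $R_B$ act, namely $\b(\h)$ or the ideal $\jj$, not of $\h$ itself; with that correction the sketch closes the cycle. Whether or not this coincides with Fialkow's original argument, it is a legitimate proof, and it has the virtue of treating $\b(\h)$ and all the norm ideals uniformly, which is precisely what the equivalence of (1), (2), (3) demands.
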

Here $\sigma_l(A)$ (resp. $\sigma_r(B)$) denote the left (resp. right) spectrum of $A$ (resp. $B$).
In our particular case, we deal with $\tau_{AA}=\delta_A$ and ${\cal J}=\k(\h)$. In view of the Lemma above, we must show that $\delta_A$ splits: i.e. that  its range and kernel are closed and complemented.
\begin{teo}
The map $\delta_A:\k_{ah}(\h)\to \k(\h)_h$ has closed range if and only if the spectrum of $A$ is finite. In this case, the map $\delta_A$ splits.
\end{teo}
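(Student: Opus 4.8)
I would prove the two implications separately, picking up the splitting for free in the easy direction.

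\emph{Finite spectrum implies closed range and splitting.} Suppose $\sigma(A)=\{\lambda_1,\dots,\lambda_N\}$, let $P_1,\dots,P_N$ be the spectral projections of $A$ (pairwise orthogonal, summing to $1$), and write operators in block form $x=(x_{ij})$ relative to $\h=\bigoplus_i P_i\h$. A direct computation gives $\delta_A(x)_{ij}=(\lambda_j-\lambda_i)x_{ij}$, so $\delta_A$ annihilates the diagonal blocks and multiplies each off-diagonal block by the nonzero scalar $\lambda_j-\lambda_i$. Hence $\ker\delta_A\cap\k(\h)_{ah}$ is exactly the space of block-diagonal anti-hermitian compacts, while $\mathrm{ran}\,\delta_A$ is exactly the space of block-off-diagonal hermitian compacts: the inclusion ``$\subseteq$'' is immediate, and conversely, given an off-diagonal hermitian compact $y$, the operator with blocks $x_{ij}=(\lambda_j-\lambda_i)^{-1}y_{ij}$ for $i\ne j$ and $x_{ii}=0$ is compact, anti-hermitian (here one uses that the $\lambda_i$ are real), and maps to $y$. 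Both subspaces are closed, and the bounded idempotents $x\mapsto\sum_iP_ixP_i$ and $x\mapsto x-\sum_iP_ixP_i$ (bounded because there are only finitely many blocks) exhibit them as complemented. Thus $\delta_A$ splits; in particular its range is closed.

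\emph{Infinite spectrum implies non-closed range.} I argue by contraposition. If $\mathrm{ran}\,\delta_A$ were closed, the open mapping theorem would force the induced injection $\hat\delta_A:\k(\h)_{ah}/(\ker\delta_A\cap\k(\h)_{ah})\to\k(\h)_h$ to be bounded below, so it suffices to exhibit anti-hermitian compacts $x_n$ with $\mathrm{dist}(x_n,\ker\delta_A)=1$ but $\|\delta_A(x_n)\|\to0$. Since $A$ is bounded self-adjoint, $\sigma(A)$ is compact, and, being infinite, it has an accumulation point $\mu$. Passing to a subsequence, pick distinct spectral points $\lambda_n\to\mu$ and pairwise disjoint open intervals $U_n\ni\lambda_n$ of length $<1/n$; then the spectral projections $E(U_n)$ of $A$ are nonzero with pairwise orthogonal ranges, and unit vectors $\xi_n\in E(U_n)\h$ form an orthonormal system with $\|(A-\lambda_n)\xi_n\|<1/n$. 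Put $x_n:=\xi_n\otimes\xi_{n+1}-\xi_{n+1}\otimes\xi_n$, a rank-two anti-hermitian operator of norm $1$. Using $\delta_A(\xi\otimes\eta)=\xi\otimes A\eta-A\xi\otimes\eta$ and that $A\xi_n$ differs from $\lambda_n\xi_n$ by a vector of norm $<1/n$, a short computation gives $\|\delta_A(x_n)\|\le|\lambda_{n+1}-\lambda_n|+2(1/n+1/(n+1))\to0$. On the other hand, any $y\in\ker\delta_A$ commutes with $A$, hence with every spectral projection, so $y\xi_{n+1}=yE(U_{n+1})\xi_{n+1}\in E(U_{n+1})\h$, which is orthogonal to $\xi_n\in E(U_n)\h$; therefore $\langle(x_n-y)\xi_{n+1},\xi_n\rangle=\langle x_n\xi_{n+1},\xi_n\rangle=\|\xi_{n+1}\|^2\langle\xi_n,\xi_n\rangle=1$, whence $\|x_n-y\|\ge1$ for every $y\in\ker\delta_A$. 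Since $0\in\ker\delta_A$ and $\|x_n\|=1$, this gives $\mathrm{dist}(x_n,\ker\delta_A)=1$, so $\hat\delta_A$ is not bounded below and $\mathrm{ran}\,\delta_A$ is not closed.

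\emph{Where the difficulty lies.} The delicate point is the necessity direction, and within it the lower bound $\mathrm{dist}(x_n,\ker\delta_A)=1$: it is not enough that $\delta_A$ fail to be bounded below, since it never is (it always has a kernel, and Theorem \ref{teofialkow} already records that $\sigma(A)\ne\emptyset$ prevents $\delta_A$ from being bounded below on $\k(\h)$); what is needed is failure to be bounded below \emph{modulo the kernel}, and the mechanism that makes this visible is that elements of $\ker\delta_A$ commute with the spectral projections of the disjoint intervals $U_n$, which forces the orthogonality used above. The choice of the disjoint shrinking intervals around the $\lambda_n$ is routine once one passes to a suitable subsequence of spectral points converging to $\mu$.
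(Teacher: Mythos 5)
Your proof is correct, and on the hard direction it takes a genuinely different route from the paper. The sufficiency half (finite spectrum $\Rightarrow$ closed range and splitting) is essentially the paper's argument: block decomposition along the spectral projections, $\delta_A(x)_{ij}=(\lambda_j-\lambda_i)x_{ij}$, kernel = diagonal blocks, range = off-diagonal hermitian compacts, with the bounded idempotent $x\mapsto\sum_i P_ixP_i$ giving the complements. For necessity, however, the paper first reduces to the complexified derivation, splits $\h$ into a continuous-spectrum part and a pure-point part, disposes of the continuous part by showing $\delta_{A_c}$ has trivial kernel and invoking Fialkow's theorem (Theorem \ref{teofialkow}), and then, assuming infinitely many eigenvalues, restricts to a Hilbert--Schmidt diagonal $A_0$ with square-summable eigenvalues and contradicts the estimate $\|xA_0-A_0x\|\ge C\|x-P(x)\|$ by testing against the rank-one projections with constant entries $1/k$, estimating the commutator in the $2$-norm. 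You instead test the quotient map directly with rank-two anti-hermitian operators built from approximate eigenvectors $\xi_n\in E(U_n)\h$ coming from spectral projections of small pairwise disjoint intervals around spectral points accumulating at $\mu$; the commutation of kernel elements with these spectral projections gives the sharp lower bound $\mathrm{dist}(x_n,\ker\delta_A)=1$, while $\|\delta_A(x_n)\|\to 0$. This treats continuous and point spectrum uniformly, so it avoids the $\h_c\oplus\h_{pp}$ case distinction, Fialkow's theorem, and the square-summability trick, and it is more self-contained (the open mapping theorem is all the abstract input needed). What the paper's route buys in exchange is contact with the elementary-operator literature (the Fialkow characterization of when $\tau_{AB}$ is bounded below on ideals) and an explicit diagonal-projection formulation of the bounded-below-modulo-kernel estimate that is reused later in the construction of the local cross section; your quotient-norm formulation is equivalent for the purpose of this theorem. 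The only points to keep explicit in a final write-up are the routine ones you already flag: choosing the $\lambda_n\ne\mu$ and the disjoint intervals after passing to a subsequence, and the observation that $\|(A-\lambda_n)E(U_n)\|$ is bounded by the length of $U_n$.
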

\begin{proof}
Denote  by $\delta^{\mathbb{C}}_A$ the map $\delta^{\mathbb{C}}_A:\k(\h)\to \k(\h)$ defined accordingly. Clearly 
$$
\k(\h)=\k(\h)_h\oplus \k(\h)_{ah},
$$
$$
\delta^{\mathbb{C}}_A(\k(\h)_h)\subset \k(\h)_{ah} \ \hbox{ and } \  \delta^{\mathbb{C}}_A(\k(\h)_{ah})\subset \k(\h)_{h}.
$$
Therefore it is apparent that $\delta_A$ has closed range (resp. splits) if and only if $\delta^{\mathbb{C}}_A$ does.  Let us denote this latter map also by $\delta_A$ to lighten the notation.

Suppose first that $\delta_A$ has closed range. The Hilbert space $\h$ can be decomposed in two orthogonal subspaces $\h=\h_c\oplus\h_{pp}$ which reduce $A$, such that $A_c=A|_{\h_c}\in \b(\h_c)$ has continuous spectrum, and the spectrum of $A_{pp}=A|_{\h_{pp}}$ has a dense subset of eigenvectors. We claim that  $\delta_{A_c}$ and $\delta_{A_{pp}}$ have both closed range. Suppose $x_n\in\b(\h_c)$ is such that $\delta_{A_c}(x_n)\to y$, then $y_n=x_n\oplus 0 \in \b(\h)$ satisfies
$$
\delta_A(y_n)=\delta_{A_c}(x_n)\oplus 0\to y\oplus 0
$$
in $\b(\h)$, and thus $y\oplus 0=\delta_A(x)$. If one writes this equality in matrix form (in terms of the decomposition $\h=\h_c\oplus\h_{pp}$), one has
$$
\left( \begin{array}{ll} y & 0 \\ 0 & 0 \end{array} \right) = 
\left( \begin{array}{ll} x_{11}A_c-A_cx_{11} & x_{12}A_{pp}-A_cx_{12} \\ x_{21}A_c-A_{pp}x_{21} & x_{22}A_{pp}-A_{pp}x_{22} \end{array} \right),
$$ 
and therefore $y=\delta_{A_c}(x_{11})$. Analogously one proves that the range of $\delta_{A_{pp}}$ is closed. In order to prove our claim (that the spectrum of $A$ is finite), we must show first that $\h_c$ is trivial. First note that $\delta_{A_c}:\k(\h_c)\to \k(\h_c)$ has trivial kernel. Indeed, if $x\ne 0$ is a compact operator commuting with $A_c$, since also $x+x^*$ commutes with $A_c$, by the spectral decomposition of compact self-adjoint operators, one can find a non trivial (finite rank) spectral projection of $x+x^*$, which commutes with $A_c$, and thus $A_c$ would have an eigenvalue, leading to a contradiction. It follows that $\delta_{A_c}:\k(\h_c)\to \k(\h_c)$ is bounded from below. Thus, by Fialkow's theorem above, one would have that $\sigma_l(A_c)\cap\sigma_r(A_c)=\emptyset$. Since for self-adjoint operators, right and left spectra coincide, this implies that the spectrum of $A_c$ is empty, and therefore $\h_c$ is trivial. 

It follows that the spectrum of $A$ has a dense subset of eigenvalues. Suppose that there are infinitely many eigenvalues. By adding a multiple of the identity to $A$ (a change that does not affect $\delta_A$), we may suppose that $0$ is a accumulation point of the set of eigenvalues of $A$. From this infinite set one can select a sequence of (different) eigenvalues $\{\lambda_n: n\ge 1\}$ which are square summable. For each $n\ge 1$ pick a unit eigenvector $e_n$,  consider $\h_0$ the closed linear span of these eigenvectors, and denote $A_0=A|_{\h_0}\in \b(\h_0)$. Note that $A_0$ is a Hilbert-Schmidt operator. It is apparent that since $\delta_A:\k(\h)\to \k(\h)$ has closed range, then  $\delta_{A_0}:\k(\h_0)\to \k(\h_0)$ also has closed range. Let us show that the kernel is complemented. Note that $A_0$, written in the orthogonal basis $\{e_n: n\ge 1\}$, is a diagonal infinite matrix, with different entries in the diagonal.  Thus the kernel of $\delta_{A_0}$, which is formed by the compact operators in $\h_0$ which commute with $A_0$, consists also of diagonal matrices. Therefore $\ker \delta_{A_0}$ is complemented, and one can choose the projection $P$ onto  $\ker \delta_{A_0}$ given by
$$
P: \left( \begin{array}{llll} x_{11} & x_{12} &  x_{13} & \dots \\ x_{21} & x_{22} &  x_{23} & \dots \\
x_{31} & x_{32} & x_{33} & \dots \\ \dots &  &  &  \end{array}\right) \mapsto \left( \begin{array}{llll}  x_{11} & 0 & 0 & \dots \\ 0 & x_{22} & 0 & \dots \\ 0  & 0  & x_{33} & \dots \\ \dots &  &  & \end{array}\right) .
$$
Then $\k(\h_0)=\ker \delta_{A_0}\oplus L$, with $L=\ker P$, and
$$
\delta_{A_0}|_L:L\to R(\delta_{A_0})
$$
is an isomorphism between Banach spaces. It follows that there exists a constant $C>0$ such that
$$
\|xA_0-A_0x\|\ge C\|x-P(x)\|, \ \ \hbox{ for all } x\in \k(\h_0).
$$
For each $k\ge 1$, consider the $k\times k$ matrix $b_k$ with $\frac1k$ in all entries, and let $x_k$ be the operator in $\h_0$ whose matrix has $b_k$ in the first $k\times k$ corner and zero elsewhere. Note that $x_k$ is a rank one orthogonal projection and thus $\|x_k\|=1$. Also note that $\|P(x_k)\|=\frac1k$.
It follows that 
$$
\|x_k-P(x_k)\|\to 1
$$
as $k\to \infty$. On the other hand, $x_kA_0-A_0x_k$ is a Hilbert-Schmidt operator whose $2$-norm squared is 
\begin{eqnarray}
\|x_kA_0-A_0x_k\|_2^2 & = & \frac{1}{k^2} \sum_{i,j=1}^k \lambda_j^2+\lambda_i^2-2\lambda_j\lambda_i=\frac{2}{k^2}\{k\sum_{i=1}^k\lambda_i^2-(\sum_{i=1}^k \lambda_i)^2\}\nonumber\\
& \le& \frac2k \sum_{i=1}^k\lambda_i^2\le \frac2k\|A_0\|_2^2.\nonumber
\end{eqnarray}
Thus $\|x_kA_0-A_0x_k\|\le \|x_kA_0-A_0x_k\|_2\to 0$, leading to a contradiction. It follows that the spectrum of $A$ is finite.

The other implication is straightforward. We outline its proof. Suppose that the spectrum of $A$ is finite, then 
$A=\sum_{i=1}^n \lambda_i p_i$, for pairwise orthogonal self-adjoint projections $p_i$ which sum $1$. One can write operators in $\k(\h)$ as $n\times n$ matrices in terms of the decomposition $\h=\sum_{i=1}^n R(p_i)$. A straightforward computation shows that if $x\in\k(\h)$ with matrix $(x_{i,j})$, then $\delta_A(x)$ is
$$
\delta_A(x)= \left( \begin{array}{lllll}
0 & (\lambda_2-\lambda_1)x_{1,2} & (\lambda_3-\lambda_1)x_{1,3} & \dots & (\lambda_n-\lambda_1)x_{1,n} \\
(\lambda_1-\lambda_2)x_{2,1} & 0 & (\lambda_3-\lambda_2)x_{2,3} & \dots & (\lambda_n-\lambda_2)x_{2,n} \\
\dots & \dots & \dots & \dots & \dots \\
(\lambda_1-\lambda_n)x_{n,1} & (\lambda_2-\lambda_n)x_{n,2} & (\lambda_2-\lambda_1)x_{1,2} & \dots & 0
\end{array} \right).
$$
Since $\lambda_i\ne\lambda_j$ if $i\ne j$, it follows that 
the set $\{xA-Ax: x\in \k(\h)\}$ consists of operators in $\k(\h)$ whose $n\times n$ matrices have zeros on the diagonal, i.e.
$$
\{xA-Ax: x\in \k(\h)\}=\{z\in\k(\h): p_i zp_i=0 , \ i=1,\dots , n\},
$$
which is clearly closed in $\k(\h)$.
The kernel of $\delta_A$ consists of elements $x\in\k(\h)$ which are diagonal matrices, with an arbitrary compact operator of the range of $p_i$ in the $i,i$-entry. This space is clearly complemented, for instance, by the closed linear space of compact operators whose matrices have zeros in the diagonal.
\end{proof}

We may combine the above results to obtain the following:

\begin{teo}
The orbit $\o_A\subset A+\k(\h)_h$ is a differentiable complemented submanifold and  the map $\pi_A:U_c(\h)\to \o_A$ is a submersion if and only if the spectrum of $A$ is finite. 
\end{teo}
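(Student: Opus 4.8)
The plan is to read off the statement by combining the preceding theorem with the Raeburn-type criterion of Lemma~\ref{lemaraeburn}, applied to the map $\pi_A\colon U_c(\h)\to\o_A$, $\pi_A(u)=uAu^*$, whose differential at $1$ is $d(\pi_A)_1=\delta_A\colon\k(\h)_{ah}\to\k(\h)_h$.

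For the direction ``$\o_A$ a complemented submanifold and $\pi_A$ a submersion $\Rightarrow$ $A$ has finite spectrum'': a submersion has surjective differential, so $\delta_A$ maps onto $T_A\o_A$; since $\o_A$ is a complemented submanifold of $A+\k(\h)$, this tangent space is a closed subspace of $\k(\h)_h$, whence $R(\delta_A)$ is closed and the preceding theorem forces $\sigma(A)$ to be finite.

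For the converse I would verify the two hypotheses of Lemma~\ref{lemaraeburn}. Hypothesis (2), that $d(\pi_A)_1=\delta_A$ splits, is exactly the content of the preceding theorem when $\sigma(A)$ is finite. Hypothesis (1), openness of $\pi_A$ onto $\o_A$ for the relative topology of $A+\k(\h)$, is where the real work lies; by equivariance it suffices to send a neighbourhood of $1$ onto a neighbourhood of $A$ in $\o_A$. Writing $A=\sum_{i=1}^n\lambda_ip_i$ with distinct $\lambda_i$ and the $p_i$ a partition of $1$ by projections, I would use the description of $R(\delta_A)$ obtained in the proof of the preceding theorem as the ``off-diagonal'' compacts $\{z\in\k(\h)_h: p_izp_i=0,\ i=1,\dots,n\}$, take $R'=\{z\in\k(\h)_h: z=\sum_ip_izp_i\}$ as a closed complement, fix a closed complement $S$ of $\ker\delta_A$ so that $\delta_A|_S\colon S\to R(\delta_A)$ is an isomorphism, and examine $G(x,r)=e^x(A+r)e^{-x}$ on $S\times R'$. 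Its differential at $(0,0)$ is $(y,s)\mapsto\delta_A(y)+s$, an isomorphism onto $R(\delta_A)\oplus R'=\k(\h)_h$, so by the inverse function theorem $G$ restricts to a diffeomorphism of a neighbourhood of $(0,0)$ onto a neighbourhood $W$ of $A$; shrinking, I may assume $\|r\|<\frac13\min_{i\ne j}|\lambda_i-\lambda_j|$ on its domain. If $b\in\o_A\cap W$, writing $b=G(x,r)$ gives $A+r=e^{-x}be^x\in\o_A$, hence $\sigma(A+r)=\{\lambda_1,\dots,\lambda_n\}$; but $A+r=\sum_i(\lambda_ip_i+p_irp_i)$ is block diagonal with respect to the $p_i$, so $\sigma(A+r)$ is the union of the clusters $\lambda_i+\sigma(p_irp_i)$, and by the size of $\|r\|$ these sit in pairwise disjoint balls about the $\lambda_i$. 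Each cluster must therefore reduce to $\{\lambda_i\}$, forcing the self-adjoint operator $p_irp_i$ to vanish, i.e. $r=0$, and then $b=\pi_A(e^x)$ with $e^x$ close to $1$. Running this argument inside an arbitrary neighbourhood $V$ of $1$ yields $\pi_A(V)\supseteq\o_A\cap W$, which is the asserted openness; Lemma~\ref{lemaraeburn} then applies.

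The main obstacle is precisely this openness of $\pi_A$, equivalently the statement that $\o_A$ with the norm topology inherited from $A+\k(\h)$ coincides with the homogeneous space $U_c(\h)/\{u:uA=Au\}$. The splitting of $\delta_A$ does not deliver it by itself; one uses finiteness of $\sigma(A)$ once more, here to identify $R'$ explicitly and to rule out nonzero ``diagonal'' compact perturbations of $A$ lying in $\o_A$ near $A$, since such a perturbation would break $\sigma(A)$ into too many pieces. An alternative route, closer to the orbit arguments of \cite{upe}, would be to produce a local smooth section $b\mapsto u_b$ of $\pi_A$ near $A$ from the Riesz spectral projections of $b$ (for instance $u_b$ the unitary part of $\sum_i q_i(b)p_i$); this again works exactly because $\sigma(A)$ is finite, so that those projections depend analytically on $b$ and differ from the $p_i$ by compacts.
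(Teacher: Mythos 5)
Your proposal is correct, and while the two implications are organized exactly as in the paper (the ``only if'' direction is the same: a submersion onto a complemented submanifold forces $R(\delta_A)$ to be closed, so the preceding theorem gives finite spectrum), your treatment of the key openness hypothesis of Lemma \ref{lemaraeburn} is genuinely different. The paper builds an explicit continuous local cross section for $\pi_A$ on the ball $\{b\in\o_A:\|b-A\|<C\}$: writing $b=uAu^*$, it sets $\theta(b)=u\,\Omega(P_A(u^*))$, where $P_A(x)=\sum_i p_ixp_i$ is the expectation onto the commutant of $A$ and $\Omega$ is the unitary part of the polar decomposition; the estimate $C\|x-P_A(x)\|\le\|xA-Ax\|$ guarantees $P_A(u^*)$ is invertible and lies in $\mathbb{C}1+\k(\h)$, and well-definedness, continuity and the section property are checked directly. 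You instead apply the inverse function theorem to $G(x,r)=e^x(A+r)e^{-x}$ on $S\times R'$ (off-diagonal anti-hermitian compacts times block-diagonal hermitian compacts) and then use a spectral rigidity argument: a point of $\o_A$ in the image must have $r=0$ because $\sigma(A+r)=\sigma(A)$ while the clusters $\lambda_i+\sigma(p_irp_i|_{R(p_i)})$ stay in disjoint balls, and a self-adjoint block with spectrum $\{0\}$ vanishes. This is sound (all the needed ingredients --- the explicit description of $R(\delta_A)$ and $\ker\delta_A$, closedness of the range, the isomorphism $\delta_A|_S$ --- come from the preceding theorem), and it buys a bit more than hypothesis (1): the chart $G^{-1}$ exhibits $\o_A\cap W$ as the slice $S\times\{0\}$, so it yields the complemented-submanifold structure near $A$ directly, with Lemma \ref{lemaraeburn} almost a formality. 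What it loses relative to the paper is explicitness: the paper's $\theta$ is a concrete, globally defined section on a metric ball, which also displays $\pi_A$ as a locally trivial fibration with an explicitly computable trivialization. Your alternative sketch via Riesz projections (unitary part of $\sum_i q_i(b)p_i$) is closer in spirit to the paper's construction, replacing $P_A(u^*)$ by an intertwiner built from the spectral projections of $b$; it works for the same reason, namely that finiteness of $\sigma(A)$ makes those projections analytic in $b$ and compact perturbations of the $p_i$.
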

\begin{proof}
Suppose first that the orbit is a complemented submanifold and the map $\pi_A$ a submersion.  Then the differential 
$$
(d\pi_A)_1=\delta_A:(TU_c(\h))_1=\k(\h)_{ah}\to (T\o_a)_A\subset \k(\h)_h
$$
splits. In particular this implies that $(T\o_a)_A=\delta_A(\k(\h)_{ah})$ and therefore by the above theorem, $A$ has finite spectrum. 

Suppose that $A$ has finite spectrum, $A=\sum_{i=1}^n \lambda_i p_i$, with  $p_i$ pairwise orthogonal projections with $\sum_{i=1}^n p_i=1$. In order to apply Lemma \ref{lemaraeburn}, we need to show that $\pi_A$ is open (regarded as a map from $U_c(\h)$ onto $\o_A$). Let us show that it has continuous local cross section. The construction proceeds analogously as in \cite{upe}. Denote by $P_A$ the map $P_A(x)=\sum_{i=1}^np_i x p_i$. $P_A$ projects onto the commutant of $A$, and it has the following expectation property: if $y,z$ commute with $A$, then $P_A(yxz)=yP_A(x)z$. Moreover, by the argument above, there exists a constant $C>0$ such that 
$$
C\|x-P_A(x)\|\le \|xA-Ax\| ,
$$
for all $x\in \k(\h)$. Consider the open ball $B=\{b\in \o_A:\|b-A\|<C\}$. We define the following map on $B$:
$$
\theta:B\to U_c(\h) , \ \ \theta(b)= u \Omega(P_A(u^*)) , \ \ \hbox{ if } b=uAu^*,
$$
where $\Omega$ is the unitary part in the polar decomposition of an (invertible) operator, $g=\Omega(g)|g|$. Note that $\Omega(g)=g|g|^{-1}=g(g^*g)^{-1/2}$ is a smooth map of $g$, and takes values in $U_c(\h)$ if $g$ is an invertible element in the C$^*$-algebra $\mathbb{C}1+\k(\h)$. In order to verify that $\theta$ is well defined, let us check first that $P_A(u^*)$ is invertible and lies in $\mathbb{C}1+\k(\h)$. Since $b=uAu^*$, with $u=1+k$, $k\in\k(\h)$,
$$
C\|u-P_A(u)\|=C\|k-P_A(k)\|\le \|kA-Ak\|=\|uA-Au\|=\|uAu^*-A\|<C.
$$
Then $\|1-P_A(u)u^*\|=\|u-P_A(u)\|<1$, and thus $P_A(u)$ and $P_A(u)^*=P_A(u^*)$ are invertible. Also $P_A(u)-1=P_A(u-1)=P_A(k)\in\k(\h)$. Next note that $\theta(b)$ depends on $b$ and not on the choice of the unitary $u$. If also $b=wAw^*$, then $v=u^*w$ commutes with $A$. By uniqueness properties of the polar decomposition, $\Omega(v^*P_A(u^*))=v^*\Omega(P_A(u^*))$, and thus
$$
w \Omega(P_A(w^*))=uv \Omega(v^*P_A(w^*))=u \Omega(P_A(u^*)).
$$
Let us prove that $\theta$ is continuous. It suffices to show that it is continuous at $A$. Suppose that $u_nAu_n^*\to A$. Then  as above, $\|u_nA-Au_n\|\to 0$, and therefore $\|u_n-P_A(u_n)\| \to 0$, or equivalently,
$$
\|1-u_n P_A(u_n^*)\|=\|1-P_A(u_n)u_n^*\|=\|u_n-P_A(u_n)\|\to 0.
$$
Therefore (since $\Omega$ is continuous), $\theta(u_nAu_n^*)=u_n\Omega(P_A(u_n^*))=\Omega(u_n P_A(u_n^*))\to 1$.
Finally, $\theta$ is a cross section: if $b=uAu^*$,
$$
\theta(b)A\theta(b)^*=u\Omega(P_A(u^*)) A \Omega(P_A(u^*))^* u^*=uAu^*=b,
$$
because the fact that $P_A(u^*)$ commutes with $A$, implies that also $\Omega(P_A(u^*))$ commutes with $A$.
\end{proof}

\section{Finsler metric in the unitary orbit}

Our next step will be to introduce a Finsler metric in $\o_A$. To do this, we shall follow ideas in \cite{duranmatarecht}, and consider a quotient metric in the tangent spaces of $\o_A$ as follows. For any $b\in \o_A$, the map $\pi_b:U_c(\h)\to \o_A$, $\pi_b(u)=ubu^*$ is clearly a submersion (note that if $b=wAw^*$, then  $\pi_b(u)=\pi_A\circ ad(w)$). Therefore
$$
(d\pi_b)_1=\delta_b:\k(\h)_{ah}\to (T\o_a)_b=R(\delta_b)
$$ 
is surjective (here $\delta_b(y)=yb-by$). Thus if $x\in R(\delta_b)$,
\begin{equation}\label{lametrica}
\|x\|_b=\inf\{ \|y\|: y\in\k(\h)_{ah}  \hbox{ such that }  \delta_b(y)=x\}.
\end{equation}

Let us recall two relevant facts from \cite{duranmatarecht}:
\begin{enumerate}
\item
The metric is invariant under the action of the group: if $x\in R(\delta_b)$ and $u\in U_c(\h)$,
$$
\|x\|_b=\|uxu^*\|_{uau^*}.
$$
This is a straightforward verification.
\item
There exists $z\in\b(\h)_{ah}$ such that $\delta_b(z)=y$ which realizes the infimum: $\|z\|=\|x\|_b$. Such element $z$ is called a {\it minimal lifting} for $x$. We remark that it may not be unique, and more relevant to our particular situation, it may not be compact. Indeed, it is obtained as a weak limit point of a minimizing sequence, as follows. Let $y_n^*=-y_n$ with $\delta_b(y_n)=x$ such that $\|y_n\|\to \|x\|_b$. The sequence  $\{y_n\}$ is bounded, therefore there exists a subsequence, still denoted $y_n$, which is convergent in the weak operator topology, $\langle y_n \xi,\eta\rangle \to \langle z\xi,\eta\rangle $ for all $\xi,\eta\in\h$. Note that 
$$
x=\delta_b(y_n)=y_nb-by_n\stackrel{wot}{\to} zb-bz=\delta_b(z),
$$
and thus $\delta_b(z)=x$. Also it is clear that $z^*=-z$. For $\epsilon>0$, fix a unit vector $\xi$ such that $|\langle z\xi,\xi\rangle |\ge \|z\|-\epsilon$.
Then 
$$
\|y_n\|\ge |\langle y_n\xi,\xi \rangle |\to |\langle z\xi,\xi\rangle |\ge \|z\|-\epsilon,
$$
and therefore taking limits
$\|z\|\ge\|x\|_b \ge \|z\|-\epsilon$. It is not clear however if there is a compact minimal lifting. 
\end{enumerate}
The interest in minimal liftings, is that they provide curves of minimal length for the Finsler metric just defined in the homogeneous space $\o_A$. We shall prove here the existence of compact minimal liftings in a special situation, when the operator $A$ has finite rank. 

\begin{prop}
Suppose that $A$ has finite rank. Then for any $b\in\o_A$ and any tangent vector $x\in R(\delta_b)$ there exists a minimal lifting  $z\in \k(\h)_{ah}$, i.e. an element that verifies
$$
\delta_b(z)=x \  \hbox{ and } \ \|z\|=\|x\|_b=\inf\{ \|y\|: y\in\k(\h)_{ah}  \hbox{ such that }  \delta_b(y)=x\}.
$$
\end{prop}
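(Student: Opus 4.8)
The plan is to exploit the finite rank of $A$, and hence of every $b\in\o_A$, to reduce the existence of a minimal lifting to a \emph{finite dimensional} operator matrix completion problem, which is then solved by the Davis--Kahan--Weinberger theorem \cite{davis}.

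\emph{Reduction to a block form.} Since $b=uAu^*$ has finite rank, let $p$ be the orthogonal projection onto its range (finite dimensional), $q=1-p$, so that $b=b_p\oplus 0$ relative to $\h=p\h\oplus q\h$ with $b_p=b|_{p\h}$ an invertible self-adjoint operator on the finite dimensional space $p\h$. Writing a skew-adjoint $y$ in $2\times2$ block form and imposing $\delta_b(y)=x$, a direct computation shows: the off-diagonal corners of $y$ are forced, $y_{pq}=-b_p^{-1}(pxq)=:c$ and (using $x=x^*$) $y_{qp}=-c^*$, and both are finite rank (their domain, resp.\ range, lies in $p\h$); the corner $y_{pp}$ must lie in the affine set $\mathcal{A}=\{w\in\b(p\h)_{ah}:\delta_{b_p}(w)=pxp\}$, which is finite dimensional and nonempty because $x\in R(\delta_b)$; and $y_{qq}$ is arbitrary subject to being skew-adjoint (and compact, when $y$ is), since $qxq=0$ for every $x\in R(\delta_b)$. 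Therefore
$$
\|x\|_b=\inf_{w\in\mathcal{A}}\ \inf_{X}\ \left\|\left(\begin{array}{cc} w & c\\ -c^* & X\end{array}\right)\right\|,
$$
the inner infimum taken over compact skew-adjoint $X$ on $q\h$.

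\emph{The completion step.} Fix $w\in\mathcal{A}$. For every $X$ the matrix norm is at least $\left\|\left(\begin{array}{cc} w & c\end{array}\right)\right\|=\||w|^2+cc^*\|^{1/2}=:\nu(w)$, and I would show this bound is attained by a finite rank $X$. Put $\h_0'=(\ker c)^\perp\subseteq q\h$, which is finite dimensional, and restrict to completions $X=X'\oplus 0$ along $q\h=\h_0'\oplus\ker c$; the matrix then becomes $\left(\begin{array}{cc} w & c_0\\ -c_0^* & X'\end{array}\right)\oplus 0$ on the finite dimensional space $p\h\oplus\h_0'$, with $c_0=c|_{\h_0'}$. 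By the Davis--Kahan--Weinberger completion theorem \cite{davis} the minimal norm of such a completion is $\max\{\left\|\left(\begin{array}{cc} w & c_0\end{array}\right)\right\|,\left\|\left(\begin{array}{c} w\\ -c_0^*\end{array}\right)\right\|\}$, which equals $\nu(w)$ since $c_0c_0^*=cc^*$; being a finite dimensional problem this minimum is attained, by some $X_w'$, and $X_w:=X_w'\oplus 0$ is finite rank. Now $\nu:\mathcal{A}\to\mathbb{R}$ is continuous and coercive ($\nu(w)^2\ge\|w\|^2$) on a closed affine subset of a finite dimensional space, hence attains its minimum at some $w_0\in\mathcal{A}$, giving $\|x\|_b=\nu(w_0)$.

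\emph{Conclusion and main obstacle.} The operator $z_0=\left(\begin{array}{cc} w_0 & c\\ -c^* & X_{w_0}\end{array}\right)$ is finite rank with $\delta_b(z_0)=x$ and $\|z_0\|=\|x\|_b$. Since $\delta_b(-z_0^*)=\delta_b(z_0)^*=x^*=x$ and $\|{-z_0^*}\|=\|z_0\|$, the operator $z=\frac12(z_0-z_0^*)$ is finite rank and skew-adjoint, satisfies $\delta_b(z)=x$, and $\|z\|\le\|x\|_b$; hence $\|z\|=\|x\|_b$ and $z$ is the desired compact minimal lifting. The delicate point is the completion step: one needs not merely the value of $\|x\|_b$ but the \emph{attainment} of the infimum by a \emph{compact} operator, and the crux is recognising that, although $q\h$ is infinite dimensional, the completion problem is genuinely finite dimensional once one discards $\ker c$, on which $c$ and $c^*$ act trivially, so that \cite{davis} applies to an honest finite matrix. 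The remaining work --- the block computations identifying the forced and free corners of a lifting --- is elementary but somewhat tedious.
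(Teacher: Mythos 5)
Your proof is correct, and it reaches the result by a noticeably different organization than the paper, even though both arguments hinge on the same two ingredients: the block decomposition of $\h$ into the (finite dimensional) range of $b$ and its orthocomplement $\ker b$, and the Davis--Kahan--Weinberger completion theorem \cite{davis}. The paper starts from a bounded, possibly non-compact, minimal lifting $z_0\in\b(\h)_{ah}$, whose existence is the weak-operator-limit argument given just before the proposition; it then replaces the corner $p_0z_0p_0$ supported in $\ker A$ by a DKW completion, invoking Corollary 1.3 of \cite{davis} to guarantee that this completion can be taken \emph{compact}, and concludes since the completed operator has norm at most $\|z_0\|=\|x\|_b$. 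You bypass both of these inputs: by solving $\delta_b(y)=x$ in block form you see that the off-diagonal corners are forced (your $c$ and $-c^*$), the corner on the range of $b$ runs over a finite dimensional affine set $\mathcal{A}$, and the corner on $\ker b$ is free; for each fixed $w\in\mathcal{A}$ the row-norm lower bound $\nu(w)$ is attained by a finite rank completion because, after discarding $\ker c$, the completion problem is an honest finite matrix problem (so no compactness refinement of DKW is needed), and the outer infimum over $\mathcal{A}$ is attained by coercivity in finite dimensions. What each route buys: the paper's argument is shorter given its preliminary remark on bounded minimal liftings, and it adapts to the situation mentioned right after the theorem (finite spectrum with at most one infinite-rank spectral projection), where the free corner is not finite dimensional; yours is self-contained (no weak-compactness step, no Corollary 1.3) and gives the slightly stronger conclusion that a \emph{finite rank} minimal lifting exists. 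Two small remarks: your final symmetrization $z=\frac12(z_0-z_0^*)$ is redundant if you use the hermitian (equivalently skew-adjoint, after multiplying by $i$) form of the DKW theorem, which is how the paper uses it, though it is harmless and covers the case where one only knows the unstructured Parrott-type formula; and your computation implicitly uses $x^*=x$, which is legitimate here because the standing assumption of Sections 4--6 is $A=A^*$, so it is worth saying so explicitly.
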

\begin{proof}
We may suppose $b=A$. Since $A$ has finite rank, there exist pairwise orthogonal projections $p_0,p_1,\dots p_n$ such that $A=\sum_{i=1}^n\lambda_i p_i$, with $p_1,\dots,p_n$ of finite rank and $p_0=1-\sum_{i=1}^n p_i$, the projection onto the kernel of $A$.
Let $z\in\k(\h)_{ah}$ be an arbitrary lifting of $x$, $x=zA-Az$. As remarked above, there exists $z_0\in\b(\h)_{ah}$ such that $x=z_0A-Az_0$ and $\|z_0\|=\|x\|_A$. Consider the anti-hermitian element $z_1=z_0-p_0z_0p_0$. It is clearly also a lifting of $x$, and moreover, it is of finite rank. Indeed,  
$$
z_1=p_0^\perp z_0 +p_0z_0p_0^\perp,
$$
with $p_0^\perp=\sum_{i=1}^n p_i$ of finite rank. 

Recall Krein's solution \cite{krein} to the extension problem for a self-adjoint operator. Given an incomplete $2\times 2$ hermitian block operator matrix of the form
$$
\left( \begin{array}{ll} X & Y \\ Y^* & \ \end{array} \right),
$$
find a completion (i.e. a self-adjoint operator  $Z$ completing the $2,2$ entry) in order that the complete matrix has minimal norm. Krein proved that there is always a solution, and that it may not be unique. More recently, Davis, Kahane and Weinberger \cite{davis} gave explicit formulas for $Z$, and in particular, in \cite[Corollary 1.3]{davis} proved that if the initial operator matrix is compact, then one can always find a compact solution $Z$.

In our case the operators are anti-hermitian, but clearly the result can be translated.  The compact operator $z_1$, when regarded as a $2\times 2$ matrix in terms of the decomposition $\h=R(p_0^\perp)\oplus R(p_0)$, has a zero in the $2,2$ entry. Thus it can be completed to a compact operator with minimal norm. That is, there exists a compact operator $c=p_0cp_0$ such that $z_c:=z_1+c$ verifies
$$
\|z_c\|\le \|z_1+p_0dp_0\|
$$
for any $d\in\b(\h)_{ah}$. In particular, $\|z_c\|\le \|z_0\|$. Clearly $\delta_A(z_c)=\delta_A(z_1)=\delta_A(z_0)=x$,
therefore in fact $\|z_c\|=\|z_0\|$. Also it is apparent that $z_c$ is compact.
\end{proof}
From minimal liftings one obtains curves with minimal length. First recall that if $\gamma(t)$, $t\in[0,1]$, is a smooth curve in $\o_A$, one measures its length as usual
$$
L(\gamma)=\int_0^1 \|\dot{\gamma}(t)\|_{\gamma(t)} d t.
$$
If $b\in \o_A$ and $x\in R(\delta_b)$, then for any lifting $z\in\k(\h)_{ah}$, the curve $\gamma(t)=e^{tz}be^{-tz}$ verifies $\gamma(0)=b$ and $\dot{\gamma}(0)=x$. If additionally $z=z_c$ is a minimal lifting, the curve $\gamma$ has minimal length up to a critical value of $t$. This could be proved using the techniques developed by Dur\'an, Mata-Lorenzo and Recht in \cite{duranmatarecht}. We give here a different  proof based on the convexity of the distance function proved in the first section. 
First we need the following result:
\begin{prop}\label{rectifiable}
Let $a_0$, $a_1$ in $\o_A$, with $A$ of finite rank, and let $d$ be the rectifiable distance in $\o_A$. Then
$$
d(a_0,a_1)=\inf\{L(\Gamma): \Gamma(t)\in U_c(\h), \pi_{a_0}(\Gamma) \hbox{ joins } a_0 \hbox{ and } a_1\}.
$$
The curves $\Gamma$ considered are continuous and piecewise smooth.
\end{prop}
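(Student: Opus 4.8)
The plan is to prove the two inequalities between $d(a_0,a_1)$ and $m:=\inf\{L(\Gamma):\Gamma\subset U_c(\h)\text{ continuous piecewise smooth},\ \pi_{a_0}(\Gamma)\text{ joins }a_0,a_1\}$ separately. The inequality $d(a_0,a_1)\le m$ is immediate: given such a $\Gamma$, the curve $\gamma:=\pi_{a_0}\circ\Gamma$ lies in $\o_A$ and joins $a_0$ to $a_1$; differentiating $\gamma(t)=\Gamma(t)a_0\Gamma(t)^*$ gives $\dot\gamma(t)=\delta_{\gamma(t)}\bigl(\dot\Gamma(t)\Gamma(t)^*\bigr)$ with $\dot\Gamma(t)\Gamma(t)^*\in\k(\h)_{ah}$ (because $\Gamma(t)-1$ is compact), so by the definition (\ref{lametrica}) of the quotient norm $\|\dot\gamma(t)\|_{\gamma(t)}\le\|\dot\Gamma(t)\Gamma(t)^*\|=\|\dot\Gamma(t)\|$, hence $L(\gamma)\le L(\Gamma)$ and $d(a_0,a_1)\le L(\Gamma)$; taking the infimum gives the claim. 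That $\gamma$ may have isolated points of vanishing derivative is harmless, since $d$ is unchanged if one allows continuous, piecewise $C^1$ curves with possibly vanishing derivative (reparametrize or subdivide).

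For $m\le d(a_0,a_1)$, fix $\varepsilon>0$ and a smooth curve $\gamma:[0,1]\to\o_A$ joining $a_0$ and $a_1$ with $L(\gamma)<d(a_0,a_1)+\varepsilon$. I will construct a continuous, piecewise smooth $\Gamma:[0,1]\to U_c(\h)$ with $\pi_{a_0}\circ\Gamma=\gamma$ and $L(\Gamma)<L(\gamma)+\varepsilon$. Since $A$ — hence every $b\in\o_A$ — has finite spectrum, the maps $\pi_b$ are submersions, $\delta_b$ splits, and each $\pi_b$ admits smooth local cross sections through $1$, exactly as in the proof of the submanifold theorem of Section 4. Choose a partition $0=s_0<\dots<s_N=1$ and define $\Gamma$ recursively, with $\Gamma(0)=1$, by $\Gamma(t)=\sigma_j(\gamma(t))\,\Gamma(s_{j-1})$ for $t\in[s_{j-1},s_j]$, where $\sigma_j$ is a smooth local section of $\pi_{\gamma(s_{j-1})}$ with $\sigma_j(\gamma(s_{j-1}))=1$ and domain containing $\gamma([s_{j-1},s_j])$ (possible once the mesh is fine). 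Then $\Gamma$ is continuous at the nodes, piecewise $C^1$, $\Gamma(t)-1$ is compact, and, by induction on $j$, $\pi_{a_0}(\Gamma(t))=\sigma_j(\gamma(t))\,\gamma(s_{j-1})\,\sigma_j(\gamma(t))^*=\gamma(t)$, so $\Gamma$ is admissible and $\pi_{a_0}(\Gamma)$ joins $a_0,a_1$.

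The delicate point, and the main obstacle, is to arrange that the speed $\|\dot\Gamma(t)\|=\|(d\sigma_j)_{\gamma(t)}(\dot\gamma(t))\|$ of this lift is controlled by $\|\dot\gamma(t)\|_{\gamma(t)}$, since an arbitrary section has no reason to realize the quotient norm. The key sublemma is that $(d\sigma_j)_{\gamma(s_{j-1})}$ may be prescribed to be any bounded right inverse of $\delta_{\gamma(s_{j-1})}$: a fixed section can be twisted as $\sigma(b)\mapsto\sigma(b)\exp(\Phi(b))$, where $\Phi$ is a smooth map into the isotropy Lie algebra $\ker\delta_{\gamma(s_{j-1})}$ with $\Phi(\gamma(s_{j-1}))=0$ and prescribed differential there (the correction lands in $\ker\delta_{\gamma(s_{j-1})}$, so it commutes with $\gamma(s_{j-1})$ and the section property is preserved). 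Using only the definition (\ref{lametrica}), pick a compact anti-hermitian $y_j$ with $\delta_{\gamma(s_{j-1})}(y_j)=\dot\gamma(s_{j-1})$ and $\|y_j\|<\|\dot\gamma(s_{j-1})\|_{\gamma(s_{j-1})}+\varepsilon$; a rank-one correction inside $\ker\delta_{\gamma(s_{j-1})}$ turns an arbitrary right inverse into one sending $\dot\gamma(s_{j-1})$ to $y_j$ (if $\dot\gamma(s_{j-1})=0$, take $\Gamma$ constant on that piece). Then $\|\dot\Gamma(t)\|$ at $t=s_{j-1}$ equals $\|y_j\|$, and by continuity of $(t,v)\mapsto(d\sigma_j)_{\gamma(t)}(v)$ and of the Finsler metric $b\mapsto\|\cdot\|_b$ one gets $\|\dot\Gamma(t)\|\le\|\dot\gamma(s_{j-1})\|_{\gamma(s_{j-1})}+\eta$ on $[s_{j-1},s_j]$ for fine enough mesh; a Riemann-sum estimate for $\int_0^1\|\dot\gamma\|_{\gamma}$ then yields $L(\Gamma)<L(\gamma)+\varepsilon$, and $\varepsilon\to0$ finishes the proof. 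Besides the twisting construction above, the only points needing care are the continuity of $b\mapsto\|\cdot\|_b$ used in the Riemann-sum step (which follows from the submersion structure and the local sections) and, as already noted, the admissibility of curves with occasionally vanishing derivative.
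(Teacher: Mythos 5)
Your first inequality is the same as the paper's (project an admissible $\Gamma$ and compare speeds using the quotient norm). For the reverse inequality you take a genuinely different route. The paper does not lift $\gamma$ exactly: it fixes a fine partition, takes \emph{compact minimal} liftings $z_i$ of $\dot{\gamma}(t_i)$ (supplied by the preceding proposition, which is where the finite rank hypothesis enters), concatenates the one-parameter groups $e^{(t-t_{i-1})z_i}$ so that the length of the resulting curve $\Omega$ is exactly the Riemann sum $\sum_i\|\dot{\gamma}(t_i)\|_{\gamma(t_i)}\Delta t_i$, and then uses a mean-value estimate to show that $\pi_{a_0}(\Omega(1))$ lands within $\epsilon$ of $a_1$, accepting an endpoint mismatch that must be corrected at the end. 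You instead build an \emph{exact} lift of $\gamma$ from smooth local sections of $\pi_{\gamma(s_{j-1})}$, twisted by $\exp(\Phi(b))$ with $\Phi$ valued in the isotropy algebra $\ker\delta_{\gamma(s_{j-1})}\cap\k(\h)_{ah}$ so that the differential at the node sends $\dot{\gamma}(s_{j-1})$ to an almost-minimal compact lifting $y_j$; note that such $y_j$ exist by the very definition (\ref{lametrica}) of the quotient norm, so you never invoke the minimal-lifting proposition, and your argument only uses that $A$ has finite spectrum (for the submersion and section structure). Your version buys an exact lift (no endpoint correction) and a formally weaker hypothesis; the paper's buys explicit piecewise geodesic approximants whose length is exactly the Riemann sum.

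Two points in your write-up need care. First, the estimate $\|\dot{\Gamma}(t)\|\le\|\dot{\gamma}(s_{j-1})\|_{\gamma(s_{j-1})}+\eta$ ``for fine enough mesh'' must hold uniformly in $j$, while the sections $\sigma_j$ and their twists are chosen only after the partition; a fixed-$j$ continuity argument does not suffice. The uniformity is available, e.g.\ by taking $\sigma_j(b)=u_j\,\theta(u_j^*bu_j)\,u_j^*$ for one fixed smooth section $\theta$ of $\pi_A$ (the operator norm, the domain radius and the constant $C$ with $C\|x-P_b(x)\|\le\|\delta_b(x)\|$ are invariant under the action), and by bounding the rank-one twists using that $\min_t\|\dot{\gamma}(t)\|>0$ and $\max_t\|\dot{\gamma}(t)\|<\infty$; but this should be said, since it is the analogue of the paper's uniform constant $D$. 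Second, your fallback ``if $\dot{\gamma}(s_{j-1})=0$, take $\Gamma$ constant on that piece'' destroys the identity $\pi_{a_0}\circ\Gamma=\gamma$ and breaks the recursion at the next node; fortunately, under the paper's convention that smooth curves have non-vanishing derivative this case does not occur, and it could anyway be handled by simply not twisting on such a piece.
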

\begin{proof}
Let $\gamma(t)\in\o_A$ be a $C^1$ curve joining $\gamma(0)=a_0$ and $\gamma(1)=a_1$. Since $\pi_a:U_c(\h)\to\o_A$ are submersions for all $a\in\o_A$, there exists a continuous piecewise smooth curve $\Gamma$ in $U_c(\h)$ such that $\pi_{a_0}(\Gamma(t))=\gamma(t)$, $t\in[0,1]$. Differentiating this equality, one obtains $\dot{\gamma}=(d\pi_{a_0})_{\Gamma}(\dot{\Gamma})$.
Elementary calculations show that $(d\pi_a)_u= \delta_{uau^*}\circ r_{u^*}$, where $r_{u^*}(x)=xu^*$.
It follows that 
$$
\dot{\delta}=\delta_{\gamma}(\dot{\Gamma}\Gamma^*).
$$
Note that $\dot{\Gamma}\Gamma^*$ belongs to $\k_{ah}(\h)$, and therefore 
$$
\|\dot{\gamma}\|_\gamma\le \|\dot{\Gamma}\Gamma^*\|=\|\dot{\Gamma}\|.
$$
Thus $L(\gamma)\le L(\Gamma)$, and $d(a_0,a_1)\le L(\Gamma)$. To finish, we must prove that one can arbitrarily approximate $L(\gamma)$ with lengths of curves in $U_c(\h)$ joining the fibers of $a_0$ and $a_1$.
Fix $\epsilon>0$. Let $0=t_0<t_1<\dots t_n=1$ be a uniform partition of $[0,1]$ ($\Delta t_i=t_i-t_{i-1}=1/n$) such that the following hold:
\begin{enumerate}
\item
$\|\dot{\gamma}(s)-\dot{\gamma}(s')\|< \epsilon/4$ if $s,s'$ lie in the same interval $[t_{i-1},t_i]$.
\item
$\left|L(\gamma)-\sum_{i=0}^{n-1} \|\dot{\gamma}(t_i)\|_{\gamma(t_i)} \Delta t_i\right|<\epsilon/2$.
\end{enumerate}
For each $i=0,\dots ,n-1$, let $z_i\in\k_{ah}(\h)$ be a minimal lifting for $\dot{\gamma}(t_i)$, i.e.
$$
\delta_{\gamma(t_i)}(z_i)=\dot{\gamma}(t_i) \ \hbox{ and } \ \ \|\dot{\gamma}(t_i)\|_{\gamma(t_i)}=\|z_i\|.
$$
Consider the following curve $\Omega$ in $U_c(\h)$:
$$
\Omega(t)=
\left\{
\begin{array}{ll}
e^{tz_0} & t\in [0,t_1) \\
e^{(t-t_1)z_1}e^{t_1z_0} & t\in [t_1,t_2) \\
e^{(t-t_2)z_2}e^{(t_2-t_1)z_1}e^{t_0z_0} & t\in [t_2,t_3) \\
\dots & \dots \\
e^{(t-t_{n-1})z_{n-1}}\dots e^{(t_2-t_1)z_1}e^{t_0z_0} & t\in [t_{n-1},1] 
\end{array}
\right.
$$
Clearly $\Omega$ is continuous and piecewise smooth, $\Omega(0)=1$ and
$$
L(\Omega)=\sum_{i=1}^{n-1}\|z_i\|\Delta t_i=\sum_{i=0}^{n-1} \|\dot{\gamma}(t_i)\|_{\gamma(t_i)} \Delta t_i.
$$
Let us show that $\pi_{a_0}(\Omega(1))$ lies close to $a_1$.
Indeed, first note that if we denote by $\alpha(t)=\pi_{a_0}(e^{tz_0})-\gamma(t)$, then $\alpha(0)=0$ and, using the mean value theorem in Banach spaces \cite{dieudonne},
$$
\|\pi_{a_0}(e^{t_1z_0})-\gamma(t_1)\|=\|\alpha(t_1)-\alpha(0)\|\le \|\dot{\alpha}(s_1)\|\Delta t_1,
$$
for some $s_1\in [0,t_1]$. Explicitly,
$$
\|\pi_{a_0}(e^{t_1z_0})-\gamma(t_1)\|\le \|e^{s_1z_0}\delta_{a_0}(z_0)e^{-s_1z_0}-\dot{\gamma}(s_1)\|\Delta t_1.
$$
Note that $\delta_{a_0}(z_0)=\dot{\gamma}(0)$, and that
$$
\|e^{s_1z_0}\dot{\gamma}(0)e^{-s_1z_0}-\dot{\gamma}(s_1)\|\le \|e^{s_1z_0}\dot{\gamma}(0)e^{-s_1z_0}-\dot{\gamma}(0)\|+\|\dot{\gamma}(0)-\dot{\gamma}(s_1)\|.
$$
The second summand is bounded by $\epsilon/4$. The first summand can be bounded as follows
\begin{eqnarray}
\|e^{s_1z_0}\dot{\gamma}(0)e^{-s_1z_0}-\dot{\gamma}(0)\|&=&\|z_0(1-e^{s_1z_0})a_0-a_0(1-e^{-s_1z_0})z_0\|\nonumber\\
&\le& 2\|z_0\|\|a_0\|\|1-e^{s_1z_0}\|\le D\Delta t_1,\nonumber
\end{eqnarray}
where $D$ is a constant which depends on $\gamma$. For instance, 
$$
\|z_i\|=\|\delta_{\gamma(t_i)}(\dot{\gamma}(t_i)\|_{\gamma(t_i)}\le \|\dot{\Gamma}(t_i)\|\le \max_{t\in[0,1]}\|\dot{\Gamma}(t)\|=D.
$$

It follows that 
$$
\|\pi_{a_0}(e^{t_1z_0})-\gamma(t_1)\|\le (D\Delta t_1+\epsilon/4)\Delta t_1.
$$
Next estimate $\|\pi_{a_0}(e^{(t_2-t_1)z_1}e^{t_1z_0})-\gamma(t_2)\|$, which by the triangle inequality is less or equal than
$$
\|e^{(t_2-t_1)z_1}e^{t_1z_0}a_0e^{-t_1z_0}e^{-(t_2-t_1)z_1}-e^{(t_2-t_1)z_1}\gamma(t_1)e^{-(t_2-t_1)z_1}\|
$$
$$
+\|e^{(t_2-t_1)z_1}\gamma(t_1)e^{-(t_2-t_1)z_1}-\gamma(t_2)\|.
$$
The first summand is
\begin{eqnarray}
\|e^{(t_2-t_1)z_1}e^{t_1z_0}a_0e^{-t_1z_0}e^{-(t_2-t_1)z_1}-e^{(t_2-t_1)z_1}\gamma(t_1)e^{-(t_2-t_1)z_1}\|=&&\nonumber\\
=\|e^{(t_2-t_1)z_1}(e^{t_1z_0}a_0e^{-t_1z_0}-\gamma(t_1))e^{-(t_2-t_1)z_1}\|=\nonumber&&\\
=\|(e^{t_1z_0}a_0e^{-t_1z_0}-\gamma(t_1))\|\le (D\Delta t_1+\epsilon/4)\Delta t_1.&&\nonumber
\end{eqnarray}
The second can be treated analogously as the first difference above,
$$
\|e^{(t_2-t_1)z_1}\gamma(t_1)e^{-(t_2-t_1)z_1}-\gamma(t_2)\|\le (D \Delta t_2+\epsilon/4)\Delta t_2.
$$
Thus (using that $\Delta t_i=1/n$) 
$$
\|\pi_{a_0}(e^{(t_2-t_1)z_1}e^{t_1z_0})-\gamma(t_2)\|\le (D/n+\epsilon/4)2/n.
$$
Inductively, one obtains that 
$$
\|\Omega(1)a_0 \Omega(1)^*-\gamma(1)\|\le D/n+\epsilon/4<\epsilon/2,
$$
choosing $n$ appropriately. The proof follows.
\end{proof}
\begin{teo}
Let $A=A^*$ be of finite rank, $b\in\o_A$ and $x\in R(\delta_b)$ a tangent vector with $\|x\|_b<\pi/2$. If $z_c$ is a (compact) minimal lifting of $x$, then the curve $\delta(t)=e^{tz_c}be^{-tz_c}$ has minimal length up to $|t|\le 1$.
\end{teo}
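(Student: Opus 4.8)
\emph{Step 1: length computation and reductions.} Using the invariance of the Finsler metric on $\o_A$ recalled in Section 5, $\dot\delta(t)=e^{tz_c}(z_cb-bz_c)e^{-tz_c}$ satisfies $\|\dot\delta(t)\|_{\delta(t)}=\|x\|_b=\|z_c\|$, so $L(\delta|_{[0,t_0]})=|t_0|\,\|z_c\|$ and $d(b,\delta(t_0))\le|t_0|\,\|z_c\|$ for all $|t_0|\le 1$. Thus the theorem amounts to the reverse inequality $d(b,\delta(t_0))\ge|t_0|\,\|z_c\|$. Since the liftings of $t_0x$ at $b$ are exactly $t_0$ times the liftings of $x$, the operator $t_0z_c$ is a minimal lifting of $t_0x$ with $\|t_0z_c\|=|t_0|\,\|z_c\|<\pi/2$; hence it suffices to treat $t_0=1$, and by homogeneity we may take $b=A$. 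By Proposition \ref{rectifiable}, $d(A,e^{z_c}Ae^{-z_c})=\inf\{L(\Gamma):\Gamma\hbox{ in }U_c(\h),\ \pi_A(\Gamma)\hbox{ joins }A\hbox{ and }e^{z_c}Ae^{-z_c}\}$; the endpoint conditions say $\Gamma(0)$ lies in the isotropy group $K=\{u\in U_c(\h):uA=Au\}$ and $\Gamma(1)\in e^{z_c}K$. Since $d_\infty$ on $U_c(\h)$ is bi-invariant, we may normalize $\Gamma(0)=1$ without changing lengths, and because $U_c(\h)$ is a length space this gives
$$
d(A,e^{z_c}Ae^{-z_c})=\inf_{k\in K}d_\infty(1,e^{z_c}k).
$$
So everything reduces to the purely unitary statement $d_\infty(1,e^{z_c}k)\ge\|z_c\|$ for every $k\in K$; geometrically, $e^{z_c}$ is the point of the coset $e^{z_c}K$ closest to $1$.

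\emph{Step 2: the convexity argument.} I would argue by contradiction. If some $k\in K$ had $d_\infty(1,e^{z_c}k)<\|z_c\|<\pi/2$, then $k=e^{y}$ with $y$ a compact anti-hermitian operator commuting with $A$ (the principal logarithm of an element of $K$ stays in the Lie algebra of $K$), and $\|y\|=d_\infty(1,k)\le d_\infty(1,e^{z_c})+d_\infty(e^{z_c},e^{z_c}k)<2\|z_c\|<\pi$. Consider the curve $\beta(s)=e^{z_c}e^{sy}$, $s\in[0,1]$: it runs inside the coset $e^{z_c}K$ from $e^{z_c}$ to $e^{z_c}k$, and it is a geodesic of $U_c(\h)$, being the left translate by $e^{z_c}$ of the one-parameter group $s\mapsto e^{s\,e^{z_c}ye^{-z_c}}$. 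Apply the convexity theorem of Section 2 with base point $u=1$: on the open set of $s$ where $d_\infty(1,\beta(s))<\pi/2$ the function $h(s)=d_\infty(1,\beta(s))$ is convex, and this set contains $s=0$ (with $h(0)=\|z_c\|$) and $s=1$ (with $h(1)<\|z_c\|$).

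\emph{Step 3: feeding in minimality of the lifting, and the obstacle.} The point is that $z_c$ being a minimal lifting pins $s=0$ as a critical point of $h$. Indeed, the liftings of $x$ at $A$ form exactly the affine subspace $z_c+\ker\delta_A$, so $\|z_c\|=\min\{\|z_c+m\|:m\in\ker\delta_A,\ m^*=-m\}$; transporting this infinitesimal optimality through the differential of $\exp$ at $z_c$ (and using that $z_c$ is simultaneously a minimal lifting of $\dot\delta(t)$ at each $\delta(t)$, by equivariance of the metric) gives $h'(0^+)\ge 0$. A convex function with non-negative right derivative at the left endpoint is non-decreasing, so $h(1)\ge h(0)=\|z_c\|$, contradicting $h(1)<\|z_c\|$; this proves the reduced claim and hence the theorem. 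The hard part is precisely this last implication: converting the linear best-approximation optimality $\|z_c\|=\min_m\|z_c+m\|$ into the metric critical-point condition $h'(0^+)\ge0$, since the map $(d\exp)_{z_c}$ relating the two does not carry the subspace $\ker\delta_A$ onto the tangent space of the coset $e^{z_c}K$; here one either invokes the quotient-metric machinery of \cite{duranmatarecht}, which is tailored to exactly this situation, or carries out the first-variation estimate directly. A secondary technical point to check is that the relevant arc of $\beta$ stays inside the $\pi/2$-ball where convexity is available; this can be arranged by first reducing to $k$ close to $1$ in $K$, which is legitimate because only the $k$ for which $d_\infty(1,e^{z_c}k)$ is near the infimum are relevant, and for those $\|y\|<2\|z_c\|$.
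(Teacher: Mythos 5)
Your outline is essentially the paper's proof: reduce via Proposition \ref{rectifiable} to comparing $\Delta(t)=e^{tz_c}$ with unitary curves $\Gamma$ ending in the fiber of $\delta(1)$, write the competing endpoint as $\Gamma(1)=e^y=e^{z_c}e^{d}$ with $d\in\k(\h)_{ah}$ commuting with $A$, and exploit convexity of $f(t)=d_\infty(1,e^{z_c}e^{td})$ together with the assertion that $t=0$ is a critical point forced by minimality of the lifting. The problem is that the one step you label ``the hard part'' and do not carry out --- converting $\|z_c\|=\min\{\|z_c+m\|:\ m\in\ker\delta_A\cap\k(\h)_{ah}\}$ into $\partial^+f(0)\ge 0$ --- is precisely the content of the paper's proof, so as written your argument has a genuine gap at the decisive point. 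Moreover the obstacle you cite (that $(d\exp)_{z_c}$ does not carry $\ker\delta_A$ onto the tangent space of the coset $e^{z_c}K$) is a red herring: no identification of subspaces is needed.

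The paper closes the step with a direct first-order estimate at the level of logarithms. By Baker--Campbell--Hausdorff, $\log(e^{z_c}e^{td})=z_c+td+R_2(z_c,td)$ with $\|R_2(z_c,td)\|/t\to 0$, so
$$
\Bigl|\tfrac1t\bigl(\|\log(e^{z_c}e^{td})\|-\|z_c\|\bigr)-\tfrac1t\bigl(\|z_c+td\|-\|z_c\|\bigr)\Bigr|\le\tfrac1t\|R_2(z_c,td)\|\longrightarrow 0,
$$
hence $\partial^+f(0)$ coincides with the one-sided directional derivative of the operator norm at $z_c$ in the direction $d$, which exists by convexity of the norm; the second-order BCH remainder cannot affect a first-order one-sided derivative, which is why the exponential causes no trouble. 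Since $d$ commutes with $A$, every $z_c+td$ is again a compact anti-hermitian lifting of $x$, so minimality of $z_c$ gives $\|z_c+td\|\ge\|z_c\|$ for all real $t$; thus $\partial^+f(0)\ge0$, $\partial^-f(0)\le0$, and convexity yields $f(t)\ge f(0)=\|z_c\|$, in particular $L(\Gamma)\ge\|y\|=f(1)\ge\|z_c\|=L(\Delta)$. No appeal to the machinery of \cite{duranmatarecht} and no argument by contradiction are needed; the paper first discards competitors $\Gamma$ of length $\ge\pi/2$, which guarantees $\|y\|<\pi/2$ (your normalization to near-optimal $k$ plays the same role for the secondary domain-of-convexity issue you raise). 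If you supply this BCH/one-sided-derivative computation, your proposal becomes the paper's proof; without it, the theorem is not established.
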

\begin{proof}
We may suppose $b=A$. By the previous proposition, in order to establish our result, it suffices to compare  the lengths of $\Delta(t)=e^{tz_c}$ and $\Gamma$, where $\Gamma$ is a piecewise smooth curve in $U_c(\h)$ joining $1$ and a unitary in the fiber of $\delta(1)$. Indeed, note  that $\Delta(t)$, which lifts $\delta$,  clearly  has the same length as $\delta$.  If $L(\Gamma)\ge \pi/2$, then it is longer than $\Delta$ (whose length is strictly less than $\|z_c\|<\pi/2$).  Otherwise,  $\Gamma(1)=e^y$, with $y\in\k(\h)_{ah}$ and $\|y\|<\pi/2$. Note that $\Gamma$ and $\Delta$ may have different endpoints, however both  $\Gamma(1)=e^y$ and $\Delta(1)=e^{z_c}$ verify
$$
e^yAe^{-y}=e^{z_c}Ae^{-z_c}
$$
and therefore $e^y=e^{z_c}e^{d}$ with $d\in\k(\h)_{ah}$ commuting with $A$. Moreover, the assumption on the lengths of $\Delta$ and $\Gamma$ implies that $\|d\|\le \pi$. Thus $\beta(t)=e^{z_c}e^{td}$, $t\in[0,1]$ is the minimal geodesic joining $e^{z_c}$ and  $e^y$. Consider the map 
$$
f(t)=d_\infty(1,\beta(t))=\|\log(e^{z_c}e^{td})\| \ , \ \ t\in[-1,1]
$$
of the previous section. We claim that it has a minimum on $t=0$. Since we know that $f$  is convex, it suffices to analyze the lateral derivatives at this point. By the Baker-Campbell-Hausdorff formula, the linear approximation of $\log(e^{z_c}e^{td})$ is given by
$$
\log(e^{z_c}e^{td})= z_c+td + R_2(z_c,td),
$$
where  $\lim_{t\to 0} \frac{\|R_2(z_c,td)\|}{t}=0$.
Consider first $t>0$, then
$$
\|z_c+td\|-\|R_2(z_c,td)\|\le \|\log(e^{z_c}e^{td})\|\le \|z_c+td\|+\|R_2(z_c,td)\|,
$$
and therefore
\begin{eqnarray}
\frac1t\{\|z_c+td\|-\|z_c\|\}-\frac1t\|R_2(z_c,td)\|&\le&  \frac1t\{\|\log(e^{z_c}e^{td})\|-\|z_c\|\}\nonumber\\
&\le& \frac1t\{\|z_c+td\|-\|z_c\|\}+\frac1t\|R_2(z_c,td)\|.\nonumber
\end{eqnarray}
If we take limit $t\to 0^+$, we obtain that the right derivative $\lim_{t\to 0^+}\frac1t\{\|z_c+td\|-\|z_c\|\}$ of the norm at $z_c$, $d$ (which exists due to the convexity of the norm, see for instance \cite{phelps}), coincides with the right derivative $\partial^+ f(0)$ of $f$ at $t=0$. Indeed, note in the middle term of the above inequalities, that $\|z_c\|=f(0)$. Since $z_c$ is a minimal lifting, and $d\in\k(\h)_{ah}$ commutes with $A$, it follows that $\|z_c+td\|\ge \|z_c\|$, i.e. $\partial^+f(0)\ge 0$. Analogously one proves that $\partial^- f(0)\le 0$. Thus $f(0)\le f(t)$ for all $t\in[-1,1]$. In particular, 
$$
L(\Gamma)\ge\|y\|=f(1)\ge f(0)=\|z_c\|=L(\Delta).
$$
\end{proof}
\begin{rem}
In fact, with the same argument as above, it can be proved that if $A$ has finite spectrum, $A=\sum_{i=1}^n\lambda_i p_i$, and only one of the $p_i$ has infinite rank, then there exist compact minimal liftings for any vector tangent to the orbit of $A$. 

This condition is not necessary, as is shown in the example of the following section.
\end{rem}  

\section{The orbit of an infinite projection}

In the special case when $A=P$ is a projection more is known on the geometry of the unitary orbit. For instance, for arbitrary C$^*$-algebras \cite{brown,pr},  two projections at norm distance strictly less than $1$ can be joined by a minimal geodesic. What is not known in the general case is whether two projections $p_1,p_2$ which are unitarily equivalent and verify $\|p_1-p_2\|=1$, can be joined by a minimal geodesic. 

The case in study in this paper corresponds to consider the C$^*$-algebra  ${\mathbb C} 1+\k(\h)$, the unitization of the algebra of compact operators. Indeed, the unitary group $U_{{\mathbb C} 1+\k(\h)}$ of this algebra consists of unitary operators of the form $u=\lambda 1+c$ with $c$ compact and $\lambda\in{\mathbb C}$ with $|\lambda|=1$. Clearly $U_c(\h) \subset U_{{\mathbb C} 1+\k(\h)}$. However, clearly the orbits coincide, 
$$
\o_P=\{uPu^*:u\in U_c(\h)\}=\{uPu^*: u\in U_{{\mathbb C} 1+\k(\h)}\}.
$$
Let us prove that in $\o_P$, any pair of elements can be joined by a minimal geodesic. Given a self-adjoint projection $p$, an operator $x$ is $p$ co-diagonal if its $2\times 2$ matrix in terms of $p$ is co-diagonal, i.e. $pxp=(1-p)x(1-p)=0$. That $p$ co-diagonal elements provide  minimal geodesics, was proved in \cite{pr}: namely if $x\in\k(\h)$ is $p$ co-diagonal, with $\|x\|\le\pi/2$ then the curve $\gamma(t)=e^{tx}pe^{-tx}$ is minimal for $t\in[0,1]$.

Before we establish our result, we must recall certain facts on the orbit of an infinite projection by the action of the Fredholm unitary group. In \cite{stratilavoiculescu} (see also \cite{carey,odospe}), it was proved that this orbit fills the connected component of $p$ in the so called Sato Grassmannian of the decomposition $\h=R(p)\oplus\ker p$ (also called restricted Grassmannian of the decomposition \cite{pressleysegal}). It consists of all projections $q$ such that 
$$
q|_{R(p)}:R(p)\to R(q) \ \ \hbox{ is a Fredholm operator and }
$$
$$
q|_{\ker p}:\ker p\to R(q) \ \ \hbox{ is compact}.
$$

\begin{teo}
Let $p_0,p_1\in \o_p$. Then there exists a minimal curve joining them. In other words, there exists $z\in\k(\h)_{ah}$ , $\|z\|\le \pi/2$, and $z$ is  $p_0$ co-diagonal, such that
$$
p_1=e^zp_0e^{-z}.
$$
\end{teo}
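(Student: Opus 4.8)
The plan is to build $z$ explicitly from the Halmos decomposition of the pair $(p_0,p_1)$, combining the classical co-diagonal geodesic of \cite{pr,brown} on the part where $p_0$ and $p_1$ are close with an elementary right-angle rotation on a finite-dimensional remainder.

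First note that $p_1-p_0\in\k(\h)$ (both lie in $\o_p\subset p+\k(\h)$), and write $p_1=up_0u^*$ with $u=1+k\in U_c(\h)$, $k\in\k(\h)$. By the classical decomposition of $\h$ associated with a pair of projections, $\h$ splits into five mutually orthogonal subspaces reducing both $p_0$ and $p_1$: the constant subspaces $\h_{ij}$ with $p_0\equiv i$ and $p_1\equiv j$ ($i,j\in\{0,1\}$), and a generic part $\h_g$ on which $(p_0,p_1)$ has no common reducing constant subspace and $|p_1-p_0|$ is unitarily equivalent to $\sin\Theta\oplus\sin\Theta$ for the angle operator $\Theta$, whose spectrum is contained in $(0,\pi/2)$. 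Since $p_1-p_0$ is compact, its restriction to each reducing subspace is compact; on $\h_{10}=R(p_0)\cap\ker p_1$ this restriction is $-1$ and on $\h_{01}=\ker p_0\cap R(p_1)$ it is $1$, so both subspaces are finite dimensional, while on $\h_g$ it has the same spectrum as $\sin\Theta$, so $\sin\Theta$ is compact and hence $\|\Theta\|<\pi/2$. Finally, composing $p_1|_{R(p_0)}\colon R(p_0)\to R(p_1)$ with the bijective isometry $u^*|_{R(p_1)}\colon R(p_1)\to R(p_0)$ gives $(1+p_0k^*p_0)|_{R(p_0)}$, which is Fredholm of index zero; since the kernel of $p_1|_{R(p_0)}$ is $\h_{10}$ and its cokernel is $\h_{01}$, it follows that $\dim\h_{10}=\dim\h_{01}$.

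Next I assemble $z=z_0\oplus z_1$ according to $\h=\h_0\oplus(\h_{10}\oplus\h_{01})$, where $\h_0=\h_{00}\oplus\h_{11}\oplus\h_g$. On $\h_0$ one has $\|(p_1-p_0)|_{\h_0}\|=\|\sin\Theta\|<1$, so \cite{pr,brown} provide the unique $p_0|_{\h_0}$-co-diagonal anti-hermitian $z_0$ with $e^{z_0}p_0e^{-z_0}=p_1$ on $\h_0$ and $\|z_0\|=\arcsin\|\sin\Theta\|<\pi/2$; moreover $z_0$ is compact, since $(2p_1-1)(2p_0-1)$ differs from $1$ by a compact operator and $z_0$ is obtained from it via the holomorphic functional calculus. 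On $\h_{10}\oplus\h_{01}$, where $p_0$ is the orthogonal projection onto $\h_{10}$ and $p_1$ the orthogonal projection onto $\h_{01}$, fix a unitary $J\colon\h_{10}\to\h_{01}$ (possible since $\dim\h_{10}=\dim\h_{01}$) and let $z_1$ be the anti-hermitian operator sending $\xi\in\h_{10}$ to $\frac{\pi}{2}J\xi$ and $\eta\in\h_{01}$ to $-\frac{\pi}{2}J^*\eta$. Then $z_1$ has finite rank, is $p_0$-co-diagonal since it maps $R(p_0)$ into $\ker p_0$ and conversely, has norm $\pi/2$, and satisfies $z_1^2=-\frac{\pi^2}{4}$ on $\h_{10}\oplus\h_{01}$, so $e^{z_1}=\frac{2}{\pi}z_1$ carries $\h_{10}$ onto $\h_{01}$ via $J$ and $\h_{01}$ onto $\h_{10}$ via $-J^*$; a direct computation then gives $e^{z_1}p_0e^{-z_1}=p_1$ on $\h_{10}\oplus\h_{01}$. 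Hence $z=z_0\oplus z_1$ is a compact anti-hermitian operator with $\|z\|\le\pi/2$, with $p_0zp_0=0=(1-p_0)z(1-p_0)$, and with $e^zp_0e^{-z}=p_1$. The minimality of $t\mapsto e^{tz}p_0e^{-tz}$ on $[0,1]$ then follows from the already recalled fact that $p_0$-co-diagonal generators of norm at most $\pi/2$ yield minimal curves in $\o_p$.

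The only non-routine points are the two reductions carried out in the second paragraph: that away from the finite-dimensional right-angle subspaces $\h_{10}$ and $\h_{01}$ the projections $p_0$ and $p_1$ are genuinely at norm distance $<1$ — which rests on the compactness of $p_1-p_0$ operating through the angle operator — and that $\dim\h_{10}=\dim\h_{01}$, which is exactly where we use that $p_1$ lies in the orbit of $p_0$, and not merely that $p_1-p_0$ is compact, via the vanishing of the Fredholm index of $p_1|_{R(p_0)}$. Everything else amounts to checking that the co-diagonal pieces glue to a co-diagonal operator of norm $\le\pi/2$.
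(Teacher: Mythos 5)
Your proof is correct, and it uses the same five-fold Halmos decomposition and the same $\pi/2$-rotation on $\h_{10}\oplus\h_{01}$ as the paper, but the two delicate steps are handled by genuinely different arguments. For the generic part, the paper works directly in Halmos's model, writes $p_1$ there as $\left(\begin{smallmatrix} c^2 & cs\\ cs & s^2\end{smallmatrix}\right)$ with $c=\cos(x)$, $s=\sin(x)$, $\|x\|\le\pi/2$, extracts compactness of the angle operator $x$ from the restricted-Grassmannian condition, and exhibits the co-diagonal exponent $\left(\begin{smallmatrix} 0 & -x\\ x & 0\end{smallmatrix}\right)$ explicitly; this never needs a strict norm bound. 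You instead reduce the generic (plus trivial) part to the case of projections at distance strictly less than $1$ and quote \cite{pr}, which forces you to prove $\|\sin\Theta\|<1$; your justification is essentially right but stated loosely: it is not true in general that $\sigma(\Theta)\subset(0,\pi/2)$ on the generic part (only that $0$ and $\pi/2$ are not eigenvalues), and the correct chain is that $\sin\Theta$ is a positive compact operator, so if its norm were $1$ this value would be an eigenvalue, contradicting genericity -- the ingredients are all present in your text, but the logical roles of ``compact'' and ``spectrum in the open interval'' should be untangled. The compactness of your $z_0$ via $\frac12\log(\epsilon_1\epsilon_0)$ with $\epsilon_1\epsilon_0\in 1+\k(\h_0)$ matches the paper's argument in its $\|p_0-p_1\|<1$ case. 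For $\dim\h_{01}=\dim\h_{10}$, the paper appeals to the identification of $\o_p$ with a connected component of the restricted (Sato) Grassmannian \cite{stratilavoiculescu,pressleysegal} and the index-zero property of $p_0p_1|_{R(p_1)}$, plus a symmetry argument; your computation -- $u^*|_{R(p_1)}\circ p_1|_{R(p_0)}=(1+p_0k^*p_0)|_{R(p_0)}$ is identity plus compact, hence $p_1|_{R(p_0)}$ is Fredholm of index zero with kernel $\h_{10}$ and cokernel $\h_{01}$ -- is more elementary and self-contained, since it uses only $p_1=up_0u^*$ with $u\in U_c(\h)$, and it also yields the finite-dimensionality of $\h_{10},\h_{01}$ directly from compactness of $p_1-p_0$ (where the paper gets it from the Fredholm property). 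In short: same skeleton, with your version trading the paper's explicit generic-part construction for a reduction to the known close-projection geodesic, and trading the Grassmannian machinery for a hands-on index computation.
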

\begin{proof}
If $\|p_0-p_1\|<1$, in \cite{pr} it was proved that there  exists a unique  $z$, $p_0$ co-diagonal, which implements the geodesic. It is explicitly computed in terms of $p_0$ and $p_1$: namely consider the symmetries (self-adjoint unitary operators)
$\epsilon_i=2p_i-1$, $i=0,1$, which verify $\|\epsilon_0-\epsilon_1\|<2$. Thus $\|1-\epsilon_1\epsilon_0\|<2$, and $z$ is 
$$
z=\frac12 \log (\epsilon_1\epsilon_0)\in \b(\h)_{ah}.
$$
Since $p_i\in\o_p$, it follows that $\epsilon_1\epsilon_0\in 1+\k(\h)$, and thus $z\in\k(H)_{ah}$.

Clearly it suffices to consider the case $\|p_0-p_1\|=1$. 

Consider the following subspaces:
$$
\h_{00}=\ker p_0\cap \ker p_1 \ , \ \ \h_{01}=\ker p_0\cap R(p_1) \ ,
$$
$$
 \ \ \h_{10}=R(p_0)\cap \ker p_1\ , \ \ \h_{11}=R(p_0)\cap R(p_1) \ ,
$$
and
$$
\h_0=(\h_{00}\oplus \h_{01} \oplus \h_{10} \oplus \h_{11})^\perp.
$$
These are the usual subspaces to regard when considering the unitary equivalence of
two projections \cite{dixmier}. The space $\h_0$ is usually called the generic part
of $p_0$ and $p_1$. It is invariant both for $p_0$ and $p_1$. Also it is
clear that $\h_{00}$ and $\h_{11}$ are invariant for $p_0$ and $p_1$, and that $p_0$ and
$p_1$ coincide here. Thus in order to find a unitary operator $e^z$ conjugating $p_0$
and $p_1$, with $z\in \k(\h)_{ah}$,  which is $p_0$ co-diagonal, and such that
$\|z\|\le \pi/2$, one needs to focus on the subspaces $\h_0$ and $\h_{01}\oplus \h_{10}$.

Let us treat first $\h_0$, denote by $p_0'$ and $p'_1$ the projections $p_0$ and $p_1$ reduced to $\h_0$. These projections are in generic position \cite{dixmier,halmos}. In \cite{halmos} Halmos showed that two projections in generic position are unitarily equivalent, more specifically, he showed that there exists a unitary operator $w:\h_0\to \jj\times \jj$ such that 
$$
wp_0'w*=p_0''=\left( \begin{array}{cc} 1 & 0 \\ 0 & 0 \end{array} \right) \ \ \hbox{ and } \ \ 
wp'_1w*=p''_1=\left( \begin{array}{cc} c^2 & cs \\ cs & s^2 \end{array} \right),
$$
where $c,s$ are positive commuting contractions acting in $\jj$ and satisfying $c^2+s^2=1$. We claim that there exists an anti-hermitian operator $y$ acting on $\jj\times \jj$, which is a co-diagonal matrix, and  such that $e^yp_0''e^{-y}=p''_1$. In that case, the element $z_0=w^*yw$ is an anti-hermitian operator in $\h_0$, which verifies $e^{z_0}p_0'e^{-z_0}=p'_1$, and is co-diagonal with respect to $p_0'$. Moreover, we claim that $y$ is a compact operator in $\jj\times \jj$ with $\|y\|\le \pi/2$, so that $z_0$ is also a compact operator in $\h_0$ with $\|z_0\|\le\pi/2$. Let us prove these claims. By a functional calculus argument, there exists a positive element $x$ in the C$^*$-algebra generated by $c$, with $\|x\|\le \pi/2$,  such that $c=\cos(x)$ and $s=\sin(x)$. Since $p''_1$ lies in the restricted Grassmannian of $p_0''$, in particular one has that $p''_1|_{\ker(p_0'')}$ is a compact operator. That is, the operator $\cos(x)\sin(x)+\sin^2(x)$ is compact in $\jj$. By a straightforward functional calculus argument, it follows that $x$ is a compact operator. Consider the operator
$$
y=\left( \begin{array}{cc} 0 & -x \\ x & 0 \end{array} \right)
$$
Clearly $y^*=-y$, $\|y\|\le \pi/2$. A straightforward computation shows that 
$$
e^yp_0''e^{-y}=p''_1,
$$
and our claims follow.

Let us consider now the space  $\h_{01}\oplus \h_{10}$. Recall
\cite{pressleysegal} that with an equivalent definition,  if $p_1$ lies in the connected component of $p_0$ (in the restricted Grassmannian) then 
$$
p_0p_1|_{R(p_1)}:R(p_1)\to R(p_0)
$$
is a Fredholm operator of index $0$. Note that $\h_{01}=\ker(p_0p_1|_{R(p_1)})$. Thus
in particular $\dim \h_{01}<\infty$.
On the other hand, it is also apparent that $\h_{10}\subset R(p_0p_1)^\perp \cap R(p_0)$,
and therefore also $\dim \h_{10}<\infty$. Therefore, the fact that $p_0p_1|_{R(p_1)}$
has zero index implies that 
$$
\dim \h_{01}\le \dim \h_{10}.
$$
The fact that $p_1$   lies in the connected component of $p_0$ in the restricted Grassmannian
corresponding to the polarization given by $p_0$, implies that, reciprocally, $p_0$ lies
in the component of $p_1$, in the Grassmannian corresponding to the polarization
given by $p_1$. Thus, by symmetry, 
$$
\dim \h_{01}= \dim \h_{10}.
$$
Let $v:\h_{10}\to \h_{01}$ be a surjective isometry, and consider 
$$
w: \h_{01}\oplus \h_{10}\to \h_{01}\oplus \h_{10} \ , w(\xi'+\xi'')=v^*\xi'+v\xi''.
$$
In matrix form (in terms of the decomposition $\h_{01}\oplus \h_{10}$), 
$$
w=
\left(
\begin{array}{ll}
0 & v \\ v^* & 0
\end{array}
\right).
$$
Apparently, $w p_0|_{\h_{01}\oplus \h_{10}} w^*=p_1|_{\h_{01}\oplus \h_{10}}$. Let 
$$
z_{2}= \pi/2
\left(
\begin{array}{cc}
0 & v \\ -v^* & 0
\end{array}
\right).
$$
Note that $z_2$ is an anti-hermitian operator in $\h_{01}\oplus \h_{10}$, with norm
equal to $\pi/2$.
 A straightforward matrix computation shows that $e^{z_2}=w$. Consider now
 $$
 z=z_0+z_1+z_2,
 $$
 where $z_1=0$ in  $\h_{00}\oplus \h_{11}$, and $z_0$ is the anti-hermitian operator in
the generic part $\h_0$ of $\h$ found above. Then it is clear that $z$ is
anti-hermitian, $z$ is compact ($\dim(\h_{01}\oplus \h_{10})<\infty$), $z$ is $p_0$ co-diagonal, $\|z\|=\pi/2$, and  $e^zp_0e^{-z}=p_1$.

\end{proof}
\section{The unitary orbit of a non self-adjoint operator}
In this section we consider the orbit of the nilpotent operator $N\in \b(\h)$, $\h=\h_0\times \h_0$,
$$
N=\left( \begin{array}{ll} 0 & I_{\h_0} \\ 0& 0\end{array} \right).
$$
The isotropy group of the action of $U_c(\h)$ consists of unitary operators of this group which are of the form
$$
v=\left( \begin{array}{ll} v_0 & 0 \\ 0& v_0 \end{array} \right).
$$
Note that $v_0-I_{\h_0}$ is compact in $\h_0$. Let 
$$
\pi_N:U_c(\h)\to \o_N=\{uNu^*: u\in U_c(\h)\} \ , \ \ \pi_N(u)=uNu^*,
$$
and $\delta_N$ its differential at $I$,
$$
\delta_N:\k(\h)_{ah}\to \k(\h) \ ,  \ \ \delta_N(x)=xN-Nx.
$$
It is apparent that the kernel of $\delta_N$, which consists of matrices of the form
\begin{equation}\label{kerdedeltaN}
y=\left( \begin{array}{ll} y_0 & 0 \\ 0& y_0 \end{array} \right), 
\end{equation}
with $y_0\in\k(\h_0)_{ah}$, is complemented in $\k(\h)_{ah}$. The range of $\delta_N$ is given by
$$
R(\delta_N)=\{ \left( \begin{array}{cc} a & b \\ 0 & -a \end{array} \right) : a\in\k(\h_0), b\in \k(\h_0)_{ah}\}.
$$
This space is clearly complemented in $\k(\h)$. For instance, a supplement is
$$
\mathbb{S}=\{\left( \begin{array}{ll} a' & b' \\ c' & a' \end{array} \right): a',c'\in\k(\h_0), b'\in\k(\h_0)_h \}.
$$
We want to use again Lemma \ref{lemaraeburn}, to prove that $\o_N$ is a differentiable submanifold of $\k(\h)$. To do this, it only remains to prove that $\pi_N$ is open.
\begin{prop}
$\pi_N:U_c(\h) \to \o_N$ has continuous local cross sections.
\end{prop}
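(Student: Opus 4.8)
The plan is to imitate the construction of the continuous local cross section in Section~4 (the proof that $\pi_A$ is open when $A$ has finite spectrum), replacing the role of the commutant of $A$ by ${\cal C}:=C^*(N)'$, the commutant of the C$^*$-algebra generated by $N$. In $2\times2$ block form with respect to $\h=\h_0\times\h_0$, one has ${\cal C}=\{\, c_0\oplus c_0 : c_0\in\b(\h_0)\,\}$; this is a C$^*$-subalgebra of $\b(\h)$ whose unitary group is exactly the isotropy group of $N$ displayed above. The point of working with $C^*(N)'$ rather than with the (non selfadjoint) commutant $\{N\}'$ is precisely that ${\cal C}$ is a $*$-algebra, so the unitary part of the polar decomposition of an invertible element of ${\cal C}$ again lies in ${\cal C}$.

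First I would fix the conditional expectation $E:\b(\h)\to{\cal C}$ sending a block matrix $(x_{ij})$ to the diagonal matrix with both diagonal entries equal to $\frac12(x_{11}+x_{22})$. It is a norm one idempotent with range ${\cal C}$, it satisfies $E(x^*)=E(x)^*$ and the bimodule identity $E(cxc')=cE(x)c'$ for $c,c'\in{\cal C}$, and it carries $\k(\h)$ into $\k(\h)\cap{\cal C}$. A direct computation of the block matrices of $xN-Nx$ and $xN^*-N^*x$ recovers the entries $x_{12}$, $x_{21}$ and $x_{11}-x_{22}$ of $x$, hence yields a constant $C>0$ with
$$
C\,\|x-E(x)\|\ \le\ \|xN-Nx\|+\|xN^*-N^*x\| \qquad (x\in\b(\h)).
$$
(One may also obtain this from the open mapping theorem, since $x\mapsto(xN-Nx,\,xN^*-N^*x)$ has closed range and kernel $\k(\h)\cap{\cal C}$, which is obviously complemented.) The key observation is that if $b=uNu^*$ with $u\in U_c(\h)$ then $\|uN-Nu\|=\|uN^*-N^*u\|=\|b-N\|$, so the right hand side above equals $2\|b-N\|$, and therefore $\|u-E(u)\|\le \frac2C\|b-N\|$ is small when $b$ is close to $N$.

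With this at hand I would define, on the open set $\{\, b\in\o_N : \|b-N\|<C/2\,\}$, the map
$$
\sigma(b)=u\,\Omega\!\left(E(u^*)\right), \qquad \mbox{whenever } b=uNu^*,
$$
where $\Omega(g)=g(g^*g)^{-1/2}$ is the unitary part of an invertible $g$. The verifications run exactly as for $\theta$ in Section~4: \emph{(i)} $E(u^*)=1+E(u^*-1)$ is a compact perturbation of the identity, and since $\|u^*-E(u^*)\|=\|u-E(u)\|<1$ it is invertible, so $\Omega(E(u^*))$ is a well defined element of $1+\k(\h)$; moreover $E(u^*)\in{\cal C}$ and ${\cal C}$ is a C$^*$-algebra, so $\Omega(E(u^*))$ is a unitary in ${\cal C}$, that is, an element of the isotropy group, and in particular it commutes with $N$; \emph{(ii)} $\sigma(b)$ does not depend on the chosen $u$: if also $b=wNw^*$ then $w=uv$ with $v$ in the isotropy group, hence $v\in{\cal C}$, and the bimodule property of $E$ together with $\Omega(v^*g)=v^*\Omega(g)$ give $w\,\Omega(E(w^*))=u\,\Omega(E(u^*))$; \emph{(iii)} $\sigma$ is a cross section, since $\Omega(E(u^*))$ commutes with $N$ one has $\sigma(b)N\sigma(b)^*=uNu^*=b$; \emph{(iv)} $\sigma$ is continuous at $N$: writing $\sigma(b)=\Omega(uE(u^*))$ and noting $\|1-uE(u^*)\|=\|u-E(u)\|\le\frac2C\|b-N\|\to 0$, continuity of $\Omega$ at the identity gives $\sigma(b)\to 1=\sigma(N)$.

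Finally, a continuous local cross section around an arbitrary point $b_0=w_0Nw_0^*\in\o_N$ is obtained by transport: $b\mapsto w_0\,\sigma(w_0^*bw_0)$ is defined and continuous on $\{\, b : \|b-b_0\|<C/2\,\}$ and is a cross section of $\pi_N$ there. Hence $\pi_N$ is open onto $\o_N$, and, together with the splitting of $\delta_N$ already recorded before the statement, Lemma~\ref{lemaraeburn} yields that $\o_N$ is a differentiable submanifold of $\k(\h)$ and $\pi_N$ a submersion. I expect the only genuinely delicate point to be the one flagged above: one cannot run the Section~4 argument with $\{N\}'$ in place of ${\cal C}$ because $\{N\}'$ is not selfadjoint, and curing this forces the two-sided estimate bounding $x-E(x)$ by $xN-Nx$ and $xN^*-N^*x$ simultaneously --- which is harmless here, since conjugating $N$ by a unitary displaces $N$ and $N^*$ by the same amount.
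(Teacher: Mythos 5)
Your proof is correct, but it follows a genuinely different route from the paper's. You transplant the Section~4 machinery (conditional expectation onto a commutant, a lower bound for the commutator, and the unitary part of the polar decomposition of $E(u^*)$), with the key twist of replacing the non-selfadjoint commutant $\{N\}'$ by ${\cal C}=C^*(N)'=\{c_0\oplus c_0\}$ and compensating with the two-sided estimate $C\|x-E(x)\|\le\|xN-Nx\|+\|xN^*-N^*x\|$, which indeed holds by reading off $x_{21}$, $x_{11}-x_{22}$ and $x_{12}$ from the block matrices of the two commutators; the identity $\|uN-Nu\|=\|uN^*-N^*u\|=\|uNu^*-N\|$ then makes everything work, and your checks of well-definedness (via the bimodule property and $\Omega(v^*g)=v^*\Omega(g)$), the section property, continuity at $N$, and translation to other points are all sound. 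The paper instead exploits the specific algebra of the nilpotent partial isometry: it sets $s(b)=bb^*NN^*+b^*N$, uses $b^2=0$ and $bb^*b=b$ to get $bs(b)=s(b)N$ and $s(b)^*s(b)N=Ns(b)^*s(b)$, and takes the unitary part $\mu(b)$ of $s(b)$ on the open set where $s(b)$ is invertible. The advantage of that formula is that $s(b)$ is a choice-free, manifestly continuous (indeed real-analytic) function of $b$ alone, so no well-definedness argument or commutator estimate is needed, and it feeds directly into the subsequent corollary that $\o_N$ is a real-analytic submanifold and $\pi_N$ a real-analytic submersion; your construction, which passes through a chosen $u$ and yields continuity at the anchor point, is enough for openness and Lemma~\ref{lemaraeburn}, but does not by itself exhibit an analytic section. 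On the other hand, your argument is more systematic: it shows how the Section~4 template extends to any operator whose $C^*$-algebra has a nice commutant with a norm-one expectation and a commutator lower bound, rather than relying on the special relations of this particular $N$.
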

\begin{proof}
If $b\in\o_N$ consider $s(b)=bb^*NN^*+b^*N$. Note that $s:\o_N\to \b(\h)$ is continuous and that $s(N)=NN^*+N^*N=1$,
therefore the set $\u_N=\{b\in\o_N: s(b) \hbox{ is invertible}\}$ is open in $\o_N$. Note  that since $b^2=0$ and $bb^*b=b$, one has that
$$
bs(b)=bb^*N=s(b)N,
$$
and
$$
s(b)^*s(b)N=NN^*bb^*N=Ns(b)^*s(b).
$$
The second identity implies that the absolute value $|s(b)|=(s(b)^*s(b))^{1/2}$ commutes with $N$. If $b\in \u_N$, the first identity implies that if $\mu(b)$ equals the unitary part in the polar decomposition $s(b)=\mu(b)|s(b)|$, then $\mu(b)N\mu(b)^*=b$. Thus one obtains a local cross section for $\pi_N$ on the open neighborhood $\u_N$ of $N$ in $\o_N$. Moreover, it takes values in $U_c(\h)$. Indeed, if $b=uNu^*$ for some $u\in U_c(\h)$ with $u-1=k$, then 
\begin{eqnarray}
s(b)&=&uNN^*u^*NN^*+uN^*u^*N\nonumber\\
&=&kNN^*(k^*NN^*+1)+NN^*k^*NN^*+(kN^*+N^*)k^*N+\nonumber\\
&& +kN^*N+NN^*+N^*N \in 1+\k(\h),\nonumber
\end{eqnarray}
because $NN^*+N^*N=1$. Therefore $\mu(b)$ is a unitary element in the C$^*$-algebra $\mathbb{C}1+\k(\h)$. Note that $s(b)\in 1+\k(\h)$, which implies that $|s(b)|\in 1+\k(\h)$, and thus in fact $\mu(b)\in U_c(\h)$. 

Cross sections on neighborhoods around other points of $\o_N$ are obtained by translation with the group action.
\end{proof}

\begin{coro}
The unitary orbit $\o_N\subset N+\k(\h)$ is a real analytic submanifold, and the map $\pi_N:U_c(\h)\to \o_N$ is a real analytic submersion.
\end{coro}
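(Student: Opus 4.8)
The plan is to apply Lemma \ref{lemaraeburn} to the Banach--Lie group $G=U_c(\h)$ acting on $X=\k(\h)$ by $u\cdot T=uTu^*$, with base point $x=N$ and orbit map $\pi_N$, and then to observe that every object occurring in that argument is in fact real analytic. First I would check the two hypotheses of Lemma \ref{lemaraeburn}. Hypothesis (1), that $\pi_N$ is open when regarded as a map onto $\o_N$ with the relative topology, follows at once from the previous proposition: a continuous local cross section near $N$ produces, by transporting it with the transitive group action, continuous local cross sections near every point of $\o_N$, and a surjection admitting continuous local cross sections is open. Hypothesis (2), that the differential $d(\pi_N)_1=\delta_N$ splits, has essentially been checked above: its kernel is the complemented subspace of $\k(\h)_{ah}$ described in (\ref{kerdedeltaN}), and its range $R(\delta_N)$, as computed above, is closed and admits the explicit closed complement $\mathbb{S}$. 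Hence Lemma \ref{lemaraeburn} yields that $\o_N$ is a complemented smooth submanifold of $\k(\h)$ and that $\pi_N$ is a smooth submersion; since translation by $N$ is an affine diffeomorphism of $\k(\h)$ onto $N+\k(\h)$ preserving $\o_N$, the same conclusions hold inside $N+\k(\h)$ (and $\o_N\subset N+\k(\h)$ because $uNu^*-N=(u-1)Nu^*+N(u^*-1)\in\k(\h)$).

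It then remains to upgrade ``smooth'' to ``real analytic''. The key observation is that the proof of Lemma \ref{lemaraeburn} is a soft application of the inverse function theorem, which holds verbatim in the real analytic category, so it is enough to verify that the input data are real analytic. Now $U_c(\h)$ is a real analytic Banach--Lie group (its exponential map is real analytic), and the action map $(u,T)\mapsto uTu^*$ is real analytic because it is the restriction of a bounded polynomial map $\b(\h)\times\b(\h)\times\b(\h)\to\b(\h)$; hence $\pi_N$ is real analytic. Finally, the local cross section $\mu$ constructed in the previous proposition is real analytic: $s(b)=bb^*NN^*+b^*N$ is a (real) polynomial in $b$ and $b^*$, operator inversion is real analytic on the open set of invertibles, and $b\mapsto(s(b)^*s(b))^{-1/2}$ is real analytic on $\u_N$ because the square root of a positive invertible operator is given by a holomorphic functional calculus integral depending analytically on the operator; composing, $\mu(b)=s(b)(s(b)^*s(b))^{-1/2}$ is real analytic. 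Feeding these real analytic ingredients into the argument of Lemma \ref{lemaraeburn} produces a real analytic atlas on $\o_N$ and exhibits $\pi_N$ as a real analytic submersion.

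The only real subtlety — hence the step I would treat most carefully — is this last one: checking that the Raeburn-type construction of charts is insensitive to the choice of category (so that real analyticity of the cross section and of the action propagates to the manifold structure), and that the polar-part map $b\mapsto(s(b)^*s(b))^{-1/2}$ is genuinely real analytic rather than merely smooth. Both points are standard: the inverse function theorem and the implicit function theorem are available in the real analytic category of maps between Banach spaces, and the integral (functional calculus) formula for the operator square root makes its analyticity transparent. So no new idea beyond the smooth case is needed, only the bookkeeping that all the maps in play stay analytic.
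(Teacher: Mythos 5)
Your proposal is correct and follows essentially the same route the paper takes implicitly: the corollary is obtained by feeding the previously computed splitting of $\delta_N$ (complemented kernel (\ref{kerdedeltaN}) and range with complement $\mathbb{S}$) and the openness of $\pi_N$ coming from the continuous local cross sections into Lemma \ref{lemaraeburn}. Your extra bookkeeping on why everything (the action, $s(b)$, the polar part) stays real analytic is a sound filling-in of a detail the paper leaves unstated.
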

Consider the following Finsler metric, which is analogous to the metric in the orbit of a self-adjoint operator:
 if $x=T(\o_N)_b=\delta_N(\k(\h))$, then
 $$
 \|x\|_b=\inf\{\|z\|: z\in\k(\h)_{ah} \hbox{ with } \delta_b(z)=x\},
 $$
 where $\delta_b(a)=ab-ba$ as usual.
Let us show now that, for certain tangent vectors, which we shall call  anti-symmetric, one can find minimal geodesics of this Finsler metric, having these symmetric vectors as initial velocity. A general tangent vector at $N$ is an operator of the form $x=\delta_N(z)$ with $z^*=-z$. In matrix form, it is 
 $$
 x=\left( \begin{array}{ll} x_0 & x_1 \\ 0 & x_0 \end{array}\right),
 $$
with $x_0,x_1\in \k(\h_0)$, $x_1^*=-x_1$. We shall say that $x$ is {\it anti-symmetric} if $x_0$ is also anti-hermitian. Equivalently, this conditions means that $x$ has liftings $z$ of the form
 $$
 z=\left( \begin{array}{ll} z_{11} & z_{12} \\ -z_{12} & z_{22} \end{array}\right),
 $$  
 with all entries anti-hermitian. A vector $x$ tangent at $b=uNu^*$ is called anti-symmetric, if $u^*xu$ (which is tangent at $N$) is anti-symmetric in the above sense. Note that this does not depend on the choice of $u$.

\medskip
 
Let us show that anti-symmetric tangent vectors have  compact minimal liftings, and that they can be explicitly computed. It suffices to show this fact at $N$.
 \begin{lem}
Let $x=\delta_N(z)$ anti-hermitian, $z\in\k(\h)_{ah}$ with $z_{12}^*=-z_{12}$, then there exists $z_0\in\k(\h)_{ah}$ a minimal lifting of $x$. Namely,
 if 
 $$
 z=\left( \begin{array}{ll} z_{11} & z_{12} \\ -z_{12} & z_{22} \end{array} \right),
 $$
 then a minimal (compact) lifting is given by
 $$
 z_0=\left( \begin{array}{ll} \frac12(z_{11}-z_{22}) & z_{12} \\ -z_{12} & \frac12(z_{22}-z_{11}) \end{array} \right).
 $$
 \end{lem}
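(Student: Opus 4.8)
The plan is to verify, for the operator $z_0$ displayed above, three things: that $z_0\in\k(\h)_{ah}$, that $\delta_N(z_0)=x$ (so $z_0$ is a lifting of $x$), and that $\|z_0\|\le\|z'\|$ for every $z'\in\k(\h)_{ah}$ with $\delta_N(z')=x$. The last inequality gives $\|z_0\|=\inf\{\|z'\|:z'\in\k(\h)_{ah},\ \delta_N(z')=x\}=\|x\|_N$, which is the assertion that $z_0$ is a minimal lifting. The only genuinely nonroutine ingredient will be a unitary symmetry of the affine subspace of liftings of $x$ that fixes $z_0$.

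The verification that $z_0$ is a lifting is immediate. Since $z$ is anti-hermitian and $z_{12}^*=-z_{12}$, the two off-diagonal blocks of $z$ coincide (each equals $z_{12}$), and likewise for $z_0$; together with the fact that the diagonal blocks of $z_0$ are $\pm\frac12(z_{11}-z_{22})$, which are anti-hermitian, this shows $z_0^*=-z_0$; and $z_0$ is plainly compact, so $z_0\in\k(\h)_{ah}$. Moreover $z-z_0$ equals the block-scalar operator $\frac12(z_{11}+z_{22})\oplus\frac12(z_{11}+z_{22})$, which belongs to $\ker\delta_N$ by (\ref{kerdedeltaN}); hence $\delta_N(z_0)=\delta_N(z)=x$. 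The same description of $\ker\delta_N$ shows that every $z'\in\k(\h)_{ah}$ with $\delta_N(z')=x$ has the form $z'=z_0+(y_0\oplus y_0)$ with $y_0\in\k(\h_0)_{ah}$; in particular all such liftings share the off-diagonal blocks of $z_0$ and differ from $z_0$ only along the diagonal.

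The heart of the proof is the following symmetry. Let $V=\left(\begin{array}{cc}0&-I\\I&0\end{array}\right)$, a unitary operator on $\h$ (it does not lie in $U_c(\h)$, but this is irrelevant: conjugation by any unitary of $\h$ is an isometry of $\k(\h)$ for the operator norm). Two short $2\times 2$ computations show, first, that $V$ commutes with every block-scalar $y_0\oplus y_0$, and second --- and this is exactly where the coincidence of the off-diagonal blocks of $z_0$ is used --- that $Vz_0V^*=-z_0$. Hence, if $z'=z_0+(y_0\oplus y_0)$ is a lifting of $x$, then
$$
-Vz'V^*=-(Vz_0V^*)-V(y_0\oplus y_0)V^*=z_0-(y_0\oplus y_0),
$$
which again lies in $\k(\h)_{ah}$, differs from $z_0$ by an element of $\ker\delta_N$, hence is again a lifting of $x$, and has $\|{-Vz'V^*}\|=\|z'\|$.

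Finally, since $z_0=\frac12\big(z'+(-Vz'V^*)\big)$, the triangle inequality gives $\|z_0\|\le\frac12\|z'\|+\frac12\|{-Vz'V^*}\|=\|z'\|$. As $z'$ was an arbitrary anti-hermitian lifting of $x$, this proves $\|z_0\|=\|x\|_N$ and completes the lemma. The one step that requires an idea is the choice of $V$: one wants a unitary carrying $z_0$ to $-z_0$ while fixing $\ker\delta_N$ pointwise, so that $z'\mapsto-Vz'V^*$ is a norm-preserving involution of the affine subspace of liftings with $z_0$ as its fixed point; after that the triangle inequality does the rest. (The analogous statement for an anti-symmetric vector tangent at a general point $b=uNu^*$ of $\o_N$ reduces to this case by transporting with $u$, since ${\rm Ad}(u)$ is an isometry of $\k(\h)$ carrying $\delta_N$ to $\delta_b$.)
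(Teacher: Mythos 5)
Your argument is correct, and it reaches the minimality of $z_0$ by a genuinely different mechanism than the paper, although the underlying symmetry is the same. The paper hermitianizes ($z\mapsto -iz$), uses the vector flip $\xi=(\xi_1,\xi_2)\mapsto\eta=(-\xi_2,\xi_1)$ to get $\langle z_0\eta,\eta\rangle=-\langle z_0\xi,\xi\rangle$, so that both $\pm\|z_0\|$ are approximate numerical values of $z_0$; it then takes norming sequences $\xi^n,\eta^n$, extracts a convergent subsequence of $r_n=\langle y\xi^n,\xi^n\rangle=\langle y\eta^n,\eta^n\rangle$, and argues by cases on the sign of the limit to conclude $\|z_0+y\|\ge\|z_0\|$ for every $y=y_0\oplus y_0$ in $\ker\delta_N$. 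You package the same flip as conjugation by the unitary $V=\left(\begin{array}{cc}0&-I\\ I&0\end{array}\right)$: since $Vz_0V^*=-z_0$ while $V$ commutes with every block scalar $y_0\oplus y_0$, the map $z'\mapsto -Vz'V^*$ is a norm-preserving involution of the affine set of anti-hermitian liftings fixing $z_0$, and the averaging identity $z_0=\frac12\left(z'+(-Vz'V^*)\right)$ plus the triangle inequality yields $\|z_0\|\le\|z'\|$ at once --- no approximate eigenvectors, no case analysis, and (like the paper's argument) it actually shows $z_0$ is minimal among all bounded anti-hermitian liftings, not only the compact ones. You also read the $(2,1)$ entries of the displayed matrices as $+z_{12}$, which is indeed forced by $z,z_0\in\k(\h)_{ah}$ together with $z_{12}^*=-z_{12}$ (the paper's display carries a sign slip), and this is exactly the point where your identity $Vz_0V^*=-z_0$ --- and the paper's ``key fact $z_{12}^*=z_{12}$'' after factoring out $i$ --- uses the anti-symmetry hypothesis.
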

\begin{proof}
The operators $y, z$ here are anti-hermitian, thus they are of the form  $y=iy'$, $z=iz'$, with $y',z'$ hermitian.  In order to lighten the notation we shall reason with hermitian operators, and denote them by $y,z$.
Denote by $\Delta=\frac12(z_{11}-z_{22})$. In order to prove that $z_0$ is a minimal lifting, one has to show that
$$
\|z_0+y\|\ge \|z_0\|,
$$
for all $y=y*\in\ker \delta_N$.  Let $\xi=(\xi_1,\xi_2)\in \h_0\times \h_0$ with $\|\xi\|=1$.
Then 
$$
\langle z_0\xi,\xi\rangle =\langle \Delta \xi_1,\xi_1\rangle -\langle \Delta \xi_2,\xi_2\rangle +2 Re \langle z_{12}\xi_2,\xi_1\rangle.
$$
Note that if $\eta=(-\xi_2,\xi_1)$, then $\|\eta\|=1$, and 
$$
\langle z_0\eta,\eta\rangle =-\langle z_0\xi,\xi\rangle.
$$
The key fact here is that $z_{12}^*=z_{12}$.
It follows that both $-\|z_0\|$ and $+\|z_0\|$ belong to the spectrum of $z_0$. Let $\xi^n=(\xi^n_1,\xi^n_2)\in\h_0\times \h_0$ with $\|\xi^n\|=1$, such that $\langle z_0\xi^n,\xi^n\rangle\to \|z_0\|$. Taking $\eta^n=(-\xi^n_2,\xi^n_1)$ as above, one has that $\langle z_0\eta^n,\eta^n\rangle \to -\|z_0\|$.
If $y\in \ker \delta_N$, by equation (\ref{kerdedeltaN}) above,
$$
\langle (z_0+y)\xi^n, \xi^n\rangle =\langle z_0\xi^n,\xi^n\rangle + \langle y_0 \xi^n_1,\xi^n_1 \rangle +\langle y_0 \xi^n_2,\xi^n_2\rangle.
$$
Also note that $\langle y\eta^n,\eta^n\rangle=\langle y\xi^n,\xi^n\rangle=r_n$, which is a bounded sequence in $\mathbb{R}$. Consider a convergent subsequence of these numbers, and denote it again by $r_n$, with $r_n\to r_0$. 
Then 
$$
\|z_0+y\|\ge \langle(z_0+y)\xi^n, \xi^n\rangle\to \|z_0\|+r_0,
$$
and 
$$
-\|z_0+y\|\le \langle (z_0+y)\eta^n, \eta^n\rangle \to -\|z_0\|+r_0,
$$
Therefore if either $r_0\ge 0$ or $r_0<0$, one has that $\|z_0+y\|\ge \|z_0\|$.
It is apparent that $z_0$ is compact.
\end{proof}

\begin{teo}
Let $b\in\o_N$ and $x\in(T\o_N)_b$  an anti-symmetric tangent vector with $\|x\|_b<\pi/2$. Then there exists a curve of the form
$\gamma(t)=e^{tz_0}be^{-tz_0}$ in $\o_N$, with $z_0$ a minimal lifting of $x$, which verifies
$$
\gamma(0)=b , \ \ \dot{\gamma}(0)=x
$$
and such that $\gamma$ is minimal for $|t|\le 1$.
\end{teo}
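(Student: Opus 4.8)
The plan is to reproduce, with the self-adjoint finite-rank operator $A$ replaced by the nilpotent $N$, the argument used above for the existence of minimal curves in $\o_A$, now feeding in the explicit compact minimal lifting supplied by the previous Lemma. First I would reduce to $b=N$: if $b=uNu^*$ then, since the Finsler metric (\ref{lametrica}) is invariant under the group action, $u^*xu\in(T\o_N)_N$ is again anti-symmetric, its minimal lifting $\tilde z_0$ (from the previous Lemma) is compact with $\|\tilde z_0\|=\|x\|_b<\pi/2$, and $z_0:=u\tilde z_0u^*$ is a compact minimal lifting of $x$ whose curve is $u\,e^{t\tilde z_0}Ne^{-t\tilde z_0}u^*$, so minimality for $b$ and for $N$ are equivalent. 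Assume then $b=N$; let $z_0$ be the compact minimal lifting of $x$ from the previous Lemma and put $\gamma(t)=e^{tz_0}Ne^{-tz_0}$. Then $\gamma(0)=N$, $\dot\gamma(0)=\delta_N(z_0)=x$, and by invariance of the metric along $\gamma$, $\|\dot\gamma(t)\|_{\gamma(t)}=\|x\|_N=\|z_0\|$, so $L(\gamma|_{[0,t_1]})=t_1\|z_0\|$ for every $t_1\in[0,1]$.

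The next step is the analogue of Proposition \ref{rectifiable} for $\o_N$: for $c\in\o_N$,
$$
d(N,c)=\inf\{L(\Gamma):\ \Gamma\ \mbox{piecewise smooth in}\ U_c(\h),\ \Gamma(0)=1,\ \pi_N(\Gamma(1))=c\}.
$$
This is proved just as Proposition \ref{rectifiable}: the maps $\pi_a$ are submersions (since $\pi_a=\pi_N\circ ad(w)$ when $a=wNw^*$, and $\pi_N$ is one by the Corollary above), so curves in $\o_N$ lift to piecewise smooth curves in $U_c(\h)$ and projections do not increase length; and the broken-geodesic approximation there only needs, at the partition points $t_i$, \emph{compact} anti-hermitian liftings of $\dot\gamma(t_i)$ whose norm exceeds $\|\dot\gamma(t_i)\|_{\gamma(t_i)}$ by less than a prescribed $\varepsilon$ --- and these exist because the quotient norm (\ref{lametrica}) is by definition an infimum over compact liftings. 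No self-adjointness of the base point intervenes.

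It then suffices to show $L(\Gamma)\ge\|z_0\|$ for every admissible $\Gamma$ joining $1$ to the fiber of $\gamma(1)$; rescaling $z_0$ to $t_1z_0$ (again a compact minimal lifting, of $t_1x$, of norm $<\pi/2$) handles $t_1<1$, and $z_0\mapsto-z_0$ handles $t<0$. If $L(\Gamma)\ge\pi/2$ this is clear, as $\|z_0\|<\pi/2$. Otherwise $\Gamma(1)=e^y$ with $y\in\k(\h)_{ah}$, $\|y\|\le L(\Gamma)<\pi/2$, and from $e^yNe^{-y}=e^{z_0}Ne^{-z_0}$ the unitary $e^{-z_0}e^y$ commutes with $N$ and is $1+$(compact); hence $e^y=e^{z_0}e^d$ with $d\in\k(\h)_{ah}$, $\|d\|\le\pi$ (cf.\ the proof of Lemma \ref{corta}), $d$ commuting with $N$, that is $d\in\ker\delta_N$ by (\ref{kerdedeltaN}). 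Consider $f(t)=d_\infty(1,e^{z_0}e^{td})=\|\log(e^{z_0}e^{td})\|$, the function studied in Section 2. As in the preceding finite-rank theorem, $f$ is convex (with the same care as there in verifying that the relevant arc of the geodesic $t\mapsto e^{z_0}e^{td}$ stays in the ball $B_\infty(1,\pi/2)$ where the Section 2 convexity theorem applies), and writing $\log(e^{z_0}e^{td})=z_0+td+R_2(z_0,td)$ with $\|R_2(z_0,td)\|/t\to0$ and using that $z_0$ is a minimal lifting and $d\in\ker\delta_N$ --- so $\|z_0+sd\|\ge\|z_0\|$ for all real $s$ --- one gets $\partial^-f(0)\le0\le\partial^+f(0)$. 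Thus $0$ minimizes $f$ and $L(\Gamma)\ge\|y\|=f(1)\ge f(0)=\|z_0\|=L(\gamma|_{[0,1]})$, giving the minimality.

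The main obstacle I anticipate is the Proposition \ref{rectifiable} analogue for $\o_N$ --- checking that the manifold and submersion structure, together with the broken-geodesic approximation (which, crucially, only requires near-minimal \emph{compact} liftings, guaranteed by the very definition of the quotient metric), all transfer to this non self-adjoint, infinite-rank base point. The genuine compact minimal lifting of the anti-symmetric $x$ itself is already delivered by the previous Lemma, and once Proposition \ref{rectifiable} is available the convexity/one-sided-derivative part is essentially verbatim from the finite-rank theorem, the only subtlety there (keeping the geodesic inside the ball of radius $\pi/2$) being handled as before.
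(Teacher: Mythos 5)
Your proposal is correct and follows essentially the same route as the paper, which itself only sketches the proof by saying it "proceeds as in the self-adjoint case": reduce to $b=N$ by invariance of the quotient metric, establish the analogue of Proposition \ref{rectifiable} (your observation that $\varepsilon$-near-minimal compact liftings, available by the very definition of the quotient norm, suffice there is the right way to handle the absence of exact minimal liftings for general tangent vectors), and then run the convexity/one-sided-derivative comparison with $f(t)=\|\log(e^{z_0}e^{td})\|$, using the compact minimal lifting from the preceding lemma and $d\in\ker\delta_N$. No essential difference from the paper's argument.
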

\begin{proof}
The proof proceeds as in the self-adjoint case. One reduces to $b=N$, and uses the analogous of Proposition \ref{rectifiable}, which is proved similarly. The result thus rests on the local convexity property of the geodesic distance of $U_c(\h)$.
\end{proof}

\bigskip

\noindent
Esteban Andruchow and Gabriel Larotonda\\
Instituto de Ciencias \\
Universidad Nacional de General Sarmiento \\
J. M. Gutierrez 1150 \\
(B1613GSX) Los Polvorines \\
Buenos Aires, Argentina  \\
e-mails: eandruch@ungs.edu.ar, glaroton@ungs.edu.ar

\end{document}